\definecolor{arrowblue}{RGB}{0,0,0}  
\newtheorem{thm}{Theorem}[section]
\newtheorem*{thm*}{Theorem}
\newtheorem{lem}[thm]{Lemma}
\newtheorem{clm}{Claim}[thm]
\newtheorem{prop}[thm]{Proposition}
\newtheorem{cor}[thm]{Corollary}
\newtheorem{assumption}{Assumption}[section]
\theoremstyle{definition}
\newtheorem{rmq}[thm]{Remark}
\newtheorem{ex}[thm]{Example}
\newtheorem{exs}[thm]{Examples}
\newtheorem{defn}[thm]{Definition}
\def\Var{\mathop{\rm Var}\nolimits}
\newcommand{\N}{\mathbb N}
\newcommand{\E}[2][]{\ensuremath{\mathbb{E}_{#1}\left[#2 \right]}}
\newcommand{\Prob}[2][]{\ensuremath{\mathbb{P}_{#1} \left(#2 \right)}}
\newcommand{\var}[2][]{\ensuremath{\Var_{#1} \left(#2 \right)}}
\newcommand{\eps}{\varepsilon}
\DeclareMathOperator{\exponentialrv}{Exp}
\newcommand{\Exp}[1]{\exponentialrv\left( #1 \right)}
\DeclareMathOperator{\degg}{deg}
\DeclareMathOperator{\win}{Win}
\newcommand{\outdeg}[1]{\ensuremath{\degg^{+}(#1)}}
\newcommand{\ensymboldremark }{\hfill$\blacktriangleleft$}
\title{Persistent hubs in CMJ branching processes with independent increments and preferential attachment trees.}
\author{T. Iyer\footnote{Weierstrass Institute for Applied Analysis and Stochastics, Mohrenstrasse 39, 10117 Berlin, Germany.}}
\date{\today}
\begin{document}
\maketitle
\abstract{A sequence of trees $(\mathcal{T}_{n})_{n \in \mathbb{N}}$ contains a \emph{persistent hub}, or displays \emph{degree centrality}, if there is a fixed node of maximal degree for all sufficiently large $n \in \mathbb{N}$. We derive sufficient criteria for the emergence of a persistent hub in genealogical trees associated with Crump-Mode-Jagers branching processes with independent waiting times between births of individuals, and sufficient criteria for the non-emergence of a persistent hub.
We also derive criteria for uniqueness of these persistent hubs. As an application, we improve results in the literature concerning the emergence of unique persistent hubs in generalised preferential attachment trees, in particular, allowing for cases where there may not be a \emph{Malthusian parameter} associated with the process. The approach we use is mostly self-contained, and does not rely on prior results about Crump-Mode-Jagers branching processes. }
\noindent  \bigskip
\\
{\bf Keywords:}  Generalised preferential attachment trees, Crump-Mode-Jagers branching processes, persistent hubs, degree centrality, Malthusian parameter. 
\\\\
{\bf AMS Subject Classification 2010:} 60J80, 90B15, 05C80. 
\section{Introduction}

In the study of models associated with the evolution of complex networks, it is of interest to understand the location of dominant `hubs' in the network. If one identifies these hubs as nodes of maximal degree, a natural question is to consider whether hubs appear `early' in the evolution of a network, or whether newer and newer hubs continue to arise. These hubs can have different meanings in different contexts. In a structure such as a social network a hub might indicate the presence of an `influencer'. On the other hand, in a structure representing genealogical trees associated with species of a virus, a hub might indicate a type of virus associated with a `super-spreader' (assuming that the number of mutant offspring produced by certain virus is positively correlated with its population).  

Often, in such structures, one expects a degree of reinforcement, or a `rich-gets-richer' effect. In the well-known preferential attachment model, introduced in the context of networks by Albert and Barab\'{a}si in \cite{barabasi}, new nodes attach to existing nodes according to their degree, so that nodes of large degree are reinforced. This model is widely studied because it displays features closely related to real-world networks. There is, by now, a large body of literature on such models, and variants. For a broader review we refer the reader to~\cite{Hof16}.

In the classical preferential attachment models, `older' nodes are likely to have large degree earlier, and are reinforced to such an extent that a single `old' node becomes the node of maximal degree throughout the evolution of the process (the precise asymptotics for this growth were derived in~\cite{mori-maximal}). In~\cite{dereich-morters-persistent} such a node is referred to as a persistent hub, whilst in other works, such as~\cite{banerjee-bhamidi-centrality}, when such a node exists one says there is `persistence of degree centrality' in the underlying model. 

A natural question is to consider the degree of reinforcement required to ensure that a persistent hub emerges. In \emph{generalised preferential attachment models} (for example,~\cite{rudas}), newer nodes instead connect to existing nodes with probability proportional to a positive function $f$ of the degree of that node. A conjecture motivated by works such as ~\cite{dereich-morters-persistent} is that, in general, a persistent hub emerges if and only if $\sum_{i=1}^{\infty} 1/f(i)^2 < \infty$. In~\cite{dereich-morters-persistent} the authors prove this conjecture in particular cases in a simplified variant of the preferential attachment model in which newly arriving nodes connect to a random number of existing nodes. 
In models more closely related to the model from~\cite{barabasi}, where the attachment step involves a possibly random denominator, this conjecture was proved in the case that $f$ is unbounded and convex by Galashin in~\cite{galashin}, and later extended to a wider degree of functions by Banerjee and Bhamidi in~\cite{banerjee-bhamidi}.  

There are a number of other ways of measuring `influence' or \emph{centrality} of nodes in a network than degree. In~\cite{banerjee-bhamidi-centrality}, building on previous works by Jog and Po-Ling-Lo~\cite{jog-loh-centrality-preferential-attachment, jog-loh-persistence-1},  Banerjee and Bhamidi showed, under certain technical conditions, that the condition~$\sum 1/f(i)^2 < \infty$ is a sufficient condition for persistence of many more general centrality measures that just `degree centrality'. Other works deal with preferential attachment type graphs with an inhomogeneous structure, where nodes are equipped with random weights that influence their evolution, so that nodes with larger weights are more likely to obtain new connections. In~\cite{bas} criteria are derived for a phase transition in the location of the node of `maximal degree' in a preferential attachment model with an additive weight. In particular, criteria are derived, based on the distribution of the random weight, for whether or not there emerges a persistent hub. The works in, for example,~\cite{eslava-high-deg, dereich-mailler-morters, LodOrt20, freeman-jordan, banerjee-bhamidi-change-point} deal with analysis of maximal degrees in other variants of the model. 

However, a natural limitation arises in the above models:

\begin{itemize}
    \item Rather than evolving in discrete time-steps, many real-world complex networks change continuously over time, and
    \item When evolving continuously over time, it may not be the case that the process satisfies the Markov property. For example, it may be unrealistic to require that the `waiting time' for a node to acquire a new link is memoryless. 
\end{itemize}

A means of overcoming these limitations is to analyse trees associated with \emph{Crump-Mode-Jagers} (CMJ) branching processes. Although one can only directly study trees in this framework, rather than more general graphs, this may not always be undesirable in applications. For example, CMJ branching processes arise naturally in modelling the sizes of populations growing over time, such as infected populations during epidemics (see, e.g.,~\cite{Komjathy2016ExplosiveCB, ball-epidemic-2016, Levesque-21}). In such contexts, knowledge about the location of `hubs' in the tree may provide a better understanding about the locations of `super-spreaders' during an epidemic. 

Working with CMJ branching processes to analyse models of discrete random trees is not new. They have already been applied to the analysis of preferential attachment type models in, for example~\cite{rudas, bhamidi, holmgren-janson, rec-trees-fit}. In~\cite{vdh-aging-mult-fitness-2017}, a particular CMJ branching process is used to analyse the citation networks. Most related to this work with regards to techniques, however, are previous works that deal with genealogical trees of CMJ branching processes in `explosive regimes' in~\cite{inhom-sup-pref-attach, limiting-structure, Komjathy2016ExplosiveCB}. These results, whilst rather general, are limited in that that the trees associated with the models can only represent extreme phenomenon in networks. For example, when applicable to preferential attachment type trees, the limiting infinite trees associated with the models are either locally finite, or only have a single node of infinite degree~\cite[Theorem~2.12]{inhom-sup-pref-attach}. 

\subsection{Overview of our contributions and structure}
\begin{enumerate}
    \item Our most general results concern sequences of genealogical trees associated with CMJ branching processes with independent waiting times between births of individuals.
    In Theorem~\ref{thm:persistence}, we provide sufficient criteria for the almost sure emergence of a persistent hub and sufficient criteria for almost sure non-existence of a persistent hub in these sequences. In Theorem~\ref{thm:unique} we provide criteria under which, almost surely, a \emph{unique} persistent hub arises.
    \item In Theorem~\ref{thm:main}, Corollary~\ref{cor:pers-det-trees} we apply these results to generalised preferential attachment models. We provide criteria under which, almost surely, a unique persistent hub arises and criteria under which, almost surely, no persistent hub arises. 
    \item Another result, Theorem~\ref{thm:superlinear}, proves an additional sufficient criterion for a unique persistent hub in the generalised preferential attachment trees. The techniques used in this result are limited to generalised preferential attachment trees, and do not carry over to more general CMJ branching processes. 
\end{enumerate}
Our results in Theorem~\ref{thm:persistence} and Theorem~\ref{thm:unique} are novel for CMJ branching processes. In particular, we do not require the common assumption of a \emph{Malthusian parameter}, which appears in many foundational results concerning CMJ branching processes (e.g.~\cite{nerman_81, nerman-jagers-84, jagers-nerman89, jagers-nerman-96, olofsson-x-log-x, iksanov2023asymptotic}). However, we conjecture that there are more general, necessary and sufficient criteria for the emergence of a persistent hub in these processes - see Remark~\ref{rem:general-conj}.
Our results in Theorem~\ref{thm:main}, Corollary~\ref{cor:pers-det-trees} and Theorem~\ref{thm:superlinear}, concerning generalised preferential attachment trees, improve results concerning preferential attachment trees by Banerjee and Bhamidi in~\cite{banerjee-bhamidi}, and Galashin~\cite{galashin} (see also Remark~\ref{rem:imp-1} and Examples~\ref{exs:imp-2}). 

The rest of this paper is structured as follows:
\begin{enumerate}
    \item Section~\ref{sec:statements} deals with a general description of the model and the main statements of results. In Definition~\ref{def:persistence} we define persistent hubs with regards to a sequence of directed trees. Then, 
    \begin{enumerate}
        \item Section~\ref{sec:cmj-results} includes a general description of CMJ branching processes (including relevant notation), and the statements of Theorem~\ref{thm:persistence} and Theorem~\ref{thm:unique}.
        \item Section~\ref{sec:preferential} includes a general description of the generalised preferential attachment model, and the statements of Theorem~\ref{thm:main}, Corollary~\ref{cor:pers-det-trees} and Theorem~\ref{thm:superlinear}. 
    \end{enumerate}
    For readers interested in statements of results, Section~\ref{sec:cmj-results} and Section~\ref{sec:preferential} may be read independently.
    \item Section~\ref{sec:proofs} then includes proofs of the results in this paper: Section~\ref{sec:auxiliary-lemmas} includes some auxiliary lemmata useful in the proofs that follow. Section~\ref{sec:cmj-proofs} includes the proof of Theorem~\ref{thm:persistence} and Theorem~\ref{thm:unique} and Section~\ref{sec:pref-attach-proofs} includes the proofs of Theorem~\ref{thm:main} and Theorem~\ref{thm:superlinear}. We omit a direct proof of Corollary~\ref{cor:pers-det-trees} since it is an immediate consequence of Theorem~\ref{thm:main}. Aside from results presented in Section~\ref{sec:auxiliary-lemmas}, our proofs of Theorem~\ref{thm:persistence} and Theorem~\ref{thm:unique} are self contained. In particular, these results do not require prior knowledge about Crump-Mode-Jagers processes. 
\end{enumerate}

\section{Description of the models and statements of results} \label{sec:statements}
Suppose that $(\mathcal{T}_{n})_{n \in \mathbb{N}_0}$ denotes a sequence of directed trees on a vertex set $V$. In this paper, we are generally interested in whether the nodes of maximal out-degree in $(\mathcal{T}_{n})_{n \in \mathbb{N}_0}$ appear `early' or `late'. In this regard, for a directed tree $T$ we denote by $\outdeg{u, T}$ the out-degree of a node $u$ in $T$. It will also be helpful to use the convention that, if $u \notin T$, we have $\outdeg{u, T} = -\infty$. We then have the following definition:
\begin{defn} \label{def:persistence}
Suppose that $(\mathcal{T}_{n})_{n \in \mathbb{N}_{0}}$ is a sequence of directed trees. We say $u \in \bigcup_{n \in \mathbb{N}_{0}} \mathcal{T}_{n}$ is a \emph{persistent hub} if $\outdeg{u, \mathcal{T}_{n}} = \max_{v \in \mathcal{T}_{n}} \outdeg{v, \mathcal{T}_{n}}$ for all but finitely many $n \in \mathbb{N}_0$.  A persistent hub $u$ is unique if it is a persistent hub, and for any other persistent hub $u'$, we have $u = u'$. If $u$ is a persistent hub, or unique persistent hub we say $(\mathcal{T}_{n})_{n \in \mathbb{N}_{0}}$ contains a persistent hub, or unique persistent hub respectively.
\end{defn}

\subsection{Description of CMJ processes and related results} \label{sec:cmj-results}

A Crump-Mode-Jagers process $(\mathscr{T}_{t})_{t \geq 0}$ represents a total population of individuals initiated by a single ancestor, where, for any $t > 0$, $\mathscr{T}_{t}$ represents the population born before time $t$. We consider \emph{individuals} as being labelled according to their lineage, encoded by elements of the infinite \emph{Ulam-Harris} tree $\mathcal{U} : = \bigcup_{n \geq 0} \mathbb{N}^{n}$. The set $\mathbb{N}^{0} := \{\varnothing\}$ represents the ancestral \emph{root} individual $\varnothing$. We denote elements $u \in \mathcal{U}$ as a tuple, so that, if $u = (u_{1}, \ldots, u_{k}) \in \mathbb{N}^{k}, k \geq 1$, we write $u = u_{1} \cdots u_{k}$. An individual $u = u_1u_2\cdots u_k$ is to be interpreted recursively as the $u_k$th \emph{child} of the individual $u_1 \cdots u_{k-1}$. For example, the elements of $\mathbb{N}$, $1, 2, \ldots$ represent the offspring of $\varnothing$. We label elements of $\mathcal{U}$ with values in $[0, \infty]$, representing \emph{birth-times}. In particular, associated with each $u \in \mathcal{U}$ is a collection of random variables $(X(uj))_{j \in \mathbb{N}} \in [0,\infty]^\mathbb{N}$. We think of $X(uj)$ as the \emph{displacement} or \emph{waiting time} between the $(j-1)$th and $j$th child of $u$. We then define 
the random function $\mathcal{B}\colon \mathcal{U} \rightarrow [0, \infty]$ recursively as follows:  
\[\mathcal{B}(\varnothing) : = 0 \quad \text{and for $u \in \mathcal{U}, i \in \mathbb{N}$,} \quad \mathcal{B}(ui) := \mathcal{B}(u) + \sum_{j=1}^{i} X(uj).\]
For each $u \in \mathcal{U}$ we think of the value $\mathcal{B}(u)$ as its `birth time'.

An assumption that we apply throughout, that is implicit in the study of CMJ branching processes, is that the random variables $(X(uj))_{j \in \mathbb{N}}$ are i.i.d for different $u \in \mathcal{U}$. We use the notation $(X_{j})_{j\in \mathbb{N}}, (X'_{j})_{j\in \mathbb{N}}$ to denote generic independent sequences of random variables with 
\begin{equation} \label{eq:def-ex}
((X(uj))_{j \in \mathbb{N}}) \sim (X_{j})_{j \in \mathbb{N}} \sim (X'_{j})_{j \in \mathbb{N}}, \quad \text{for all } u \in \mathcal{U}.
\end{equation}

We use $|\cdot|$ to measure the \emph{length} of a tuple $u$, so that, if $u = \varnothing$ we set $|u| = 0$, whilst if $u = u_{1} \cdots u_{k}$ then $|u| = k$.
Given $\ell \leq |u|$, we set $u_{|_\ell} := u_{1} \cdots u_{\ell}$, and say $u_{\ell}$ is an \emph{ancestor} of $u$. It will be helpful to equip $\mathcal{U}$ with the lexicographic total order $\leq_{L}$: given elements $u, v$ we say $u \leq_{L} v$ if either $u$ is a ancestor of $v$, or $u_{\ell} < v_{\ell}$ where $\ell = \min \left\{i \in \mathbb{N}: u_{i} \neq v_{i} \right\}$. 
 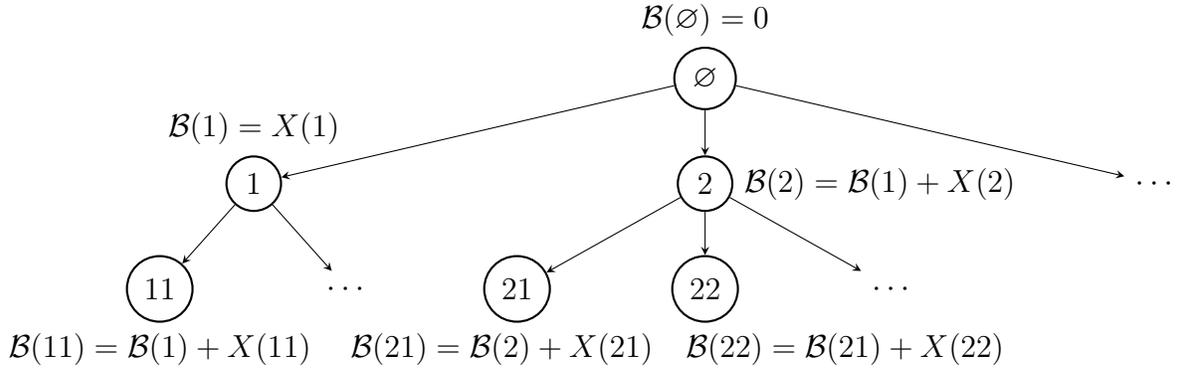
\begin{figure}[H]
\centering
\captionsetup{width=0.85\textwidth}
\begin{tikzpicture}[level distance=14mm,
   level 1/.style={sibling distance=60mm},
   level 2/.style={sibling distance=25mm},
   level 3/.style={sibling distance=10mm} 
   ]
   \node[circle, draw, thick,  label={$\mathcal{B}(\varnothing) = 0$}] (0) {$\varnothing$}
    child {node[circle, draw, thick, label=above:{$\mathcal{B}(1) = X(1)$}] {$1$} edge from parent[-stealth]
      child {node[circle, draw, thick, label=below:{$\mathcal{B}(11) = \mathcal{B}(1) + X(11)$}] {$11$}
      } 
      child {node {$\cdots$}}
    }
    child {node[circle, draw, thick, label=right:{$\mathcal{B}(2) = \mathcal{B}(1) + X(2)$}] {$2$} edge from parent[-stealth]
      child {node[circle, draw, thick, label=below:{$\mathcal{B}(21) = \mathcal{B}(2) + X(21) \phantom{ve}$}] {$21$}}
      child {node[circle, draw, thick, label=below:{$\phantom{mathcalffdrgttgtrf} \mathcal{B}(22) = \mathcal{B}(21) + X(22) $}] {$22$}}
      child {node {$\cdots$}}
    }
    child {node {$\cdots$} edge from parent[-stealth]}
    ;
\end{tikzpicture}
\caption{A illustration of the way birth times are assigned to individuals in the first three generations of the process. Note that birth times are increasing on paths directed away from the root. }
\end{figure}
 For each $t \in [0, \infty]$, we set $\mathscr{T}_{t} = \{x \in \mathcal{U} \colon \mathcal{B}(x) \leq t\}$ and denote by $(\mathcal{F}_{t})_{t \geq 0}$ the natural filtrations generated by $(\mathscr{T}_{t})_{t \geq 0}$. If a subset $T \subset \mathcal{U}$ is such that, for any $u \in T$, we also have $u_{|_{\ell}} \in T$, for each $\ell \leq |u|$, note that one may view $T$ as a \emph{directed tree} in the natural way, connecting nodes with edges directed outwards to their children. Therefore, the process $(\mathscr{T}_{t})_{t \geq 0}$ yields an increasing family of directed trees, where, if $s < t$, we have $\mathscr{T}_{s} \subseteq \mathscr{T}_{t}$. In relation to the process $(\mathscr{T}_{t})_{t \geq 0}$, we define the stopping times $(\tau_{k})_{k \in \mathbb{N}_{0}}$ such that 
 \begin{equation} \label{eq:tau-k-def} 
 \tau_{k} := \inf\{t \geq 0\colon |\mathscr{T}_{t}| \geq k\}.
 \end{equation}
Note that the values $(\tau_{k})_{k \in \mathbb{N}_0}$ describe the times in which changes, or `jumps' occur in the process $(\mathscr{T}_{t})_{t \geq 0}$.
The process $(\mathscr{T}_{\tau_{n}})_{n \in \mathbb{N}_{0}}$ therefore describes the total family of discrete trees appearing as jumps in the process $(\mathscr{T}_{t})_{t \geq 0}$ before $\tau_{\infty}:= \lim_{k \to \infty} \tau_{k}$. 
\begin{figure}[H]
\centering
\captionsetup{width=0.85\textwidth}
\begin{tikzpicture}[level distance=14mm,
   level 1/.style={sibling distance=60mm},
   level 2/.style={sibling distance=25mm},
   level 3/.style={sibling distance=15mm}]
   \node[circle, draw, thick,  label={$0$}] (0) {$\varnothing$}
    child {node[circle, draw, thick, label=left:{$0.5$}] {$1$} edge from parent[-stealth]
      child {node[circle, draw, thick, label=left:{$0.6$}] {$11$}
      }
      child {node[circle, draw, thick, label=left:{$1.1$}] {$12$}
        child {node[circle, draw, thick, label=below:{$1.8$}] {$121$}}
    }}
    child {node[circle, draw, thick, label=left:{$1.2$}] {$2$} edge from parent[-stealth]
      child {node[circle, draw, thick, label=left:{$1.7$}] {$21$}}
      child {node[circle, draw, thick, label=left:{$1.74$}] {$22$}
        child {node[circle, draw, thick, label=below:{$1.9$}] {$221$}}
        child {node[circle, draw, thick, label=below:{$1.99$}] {$222$}}
      }
    }
    ;
\end{tikzpicture}
\caption{A possible sample of the process $(\mathscr{T}_{t})_{t\geq 0}$ at time $t = 1.99$, with birth times labelled. In this case $\tau_{9} = 1.99$. }
\end{figure}
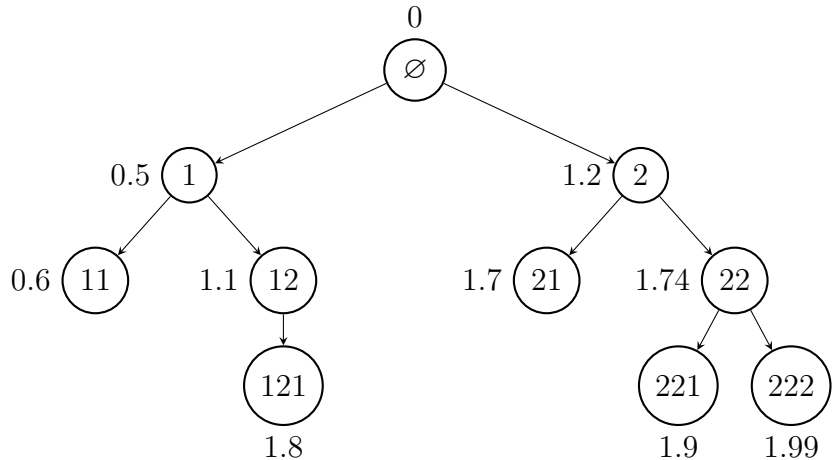
In this paper we assume the following throughout. Recall the definitions of $(X_{j})_{j \in \mathbb{N}}, (X'_{j})_{j \in \mathbb{N}}$ from~\eqref{eq:def-ex}.

\begin{assumption} \label{ass:gen-ass}
We assume throughout this paper that, that the collection $(X_{i})_{i \in \mathbb{N}}$ defined in~\eqref{eq:def-ex} satisfies the following:
\begin{enumerate}
    \item \label{item:mutual-independence} The values $(X_i)_{i\in\N}$ are mutually independent of each other. 
    \item \label{item:finiteness} For each $j \in \mathbb{N}$ we have $X_{j} < \infty$, almost surely.
    \item \label{item:non-zero-explosion} We have 
    $\sum_{j=1}^{\infty} \prod_{i=1}^{j} \Prob{X_{i} = 0} < 1$. 
\end{enumerate}
\end{assumption}
\begin{rmq}
Some comments about Assumption~\ref{ass:gen-ass} are the following:
\begin{enumerate}
    \item Item~\ref{item:mutual-independence} of Assumption~\ref{ass:gen-ass} asserts that the waiting times between births of children of an individual $u$ are independent of each other. This may not always be desirable, for example, one might expect these waiting times to be correlated, depending on a `random attribute' associated with $u$, as is the case in inhomogeneous models such as~\cite{dereich-mailler-morters, inhom-sup-pref-attach}. 
    \item Item~\ref{item:finiteness} of Assumption~\ref{ass:gen-ass} implies that every individual produces infinitely many total offspring as time tends to infinity, hence that the process is supercritical. 
    \item Note that 
\[
\E{\left|\left\{i \in \mathbb{N}\colon \mathcal{B}(i) = 0 \right\} \right|} = \sum_{j=1}^{\infty} \Prob{\left|\left\{i \in \mathbb{N}\colon \mathcal{B}(i) = 0 \right\} \right| \geq j} = \sum_{j=1}^{\infty} \prod_{i=1}^{j} \Prob{X_{i} = 0}. 
\]
Therefore, Item~\ref{item:non-zero-explosion} of Assumption~\ref{item:non-zero-explosion} shows that the tree of individuals born `instantaneously' is a tree associated with a sub-critical branching process, hence is finite almost surely. Thus, if $\tau_{k}$ are as defined in~\eqref{eq:tau-k-def}, this assumption removes the degenerate case that $\tau_{k} = 0$ for all $k \in \mathbb{N}$.
\end{enumerate}
{\small\ensymboldremark }
\end{rmq}

In the following theorem, recall that, given a sequence $(S_{j})_{j \in \mathbb{N}}$ of mutually independent random variables, the series $\sum_{j=1}^{\infty}S_{j}$ converges with probability zero or one, and criteria for this convergence are given by the well-known Kolmogorov three series theorem (c.f. Lemma~\ref{lem:kolmogorov}). 

\begin{thm} \label{thm:persistence}
    Assume $(X_{j})_{j \in \mathbb{N}}, (X'_{j})_{j \in \mathbb{N}}$ are defined as in~\eqref{eq:def-ex} and Assumption~\ref{ass:gen-ass} is satisfied. Then, in the process $(\mathscr{T}_{\tau_{n}})_{n \in \mathbb{N}_0}$, we have the following: 
    \begin{enumerate}
        \item \label{item:non-persistence}  If the series $\sum_{i=1}^{\infty} (X_{i} - X'_{i})$ diverges almost surely, then, almost surely, $(\mathscr{T}_{\tau_{n}})_{n \in \mathbb{N}_0}$ does not contain a persistent hub.
        \item \label{item:persistent-hub} Suppose that, for $\alpha > 0$ and $K \in \mathbb{N}$ we have
        \begin{equation} \label{eq:persistent}
             \sum_{j=1}^{\infty} \E{e^{-\alpha \sum_{i=1}^{j} X_{i}}} < 1 \quad \text{and} \quad \prod_{i=K+1}^{\infty}\E{e^{\alpha (X'_{i} - X_{i}) }} < \infty.
        \end{equation}
        Then, almost surely, $(\mathscr{T}_{\tau_{n}})_{n \in \mathbb{N}_0}$ contains a persistent hub.  
    \end{enumerate} 
\end{thm}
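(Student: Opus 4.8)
The plan is to recast the existence of a persistent hub as the stabilisation of a ``record-holder'' sequence, and then to show that only finitely many individuals can ever hold the record. For $u\in\mathcal{U}$ and $d\ge 0$ set $R_u(d):=\sum_{i=1}^{d}X(ui)$ and $A_u(d):=\mathcal{B}(u)+R_u(d)$, the time at which $u$ first attains out-degree $d$; then $\outdeg{u,\mathscr{T}_{\tau_n}}$ is the largest $d$ with $A_u(d)\le\tau_n$. One checks directly that $(\mathscr{T}_{\tau_n})_{n}$ contains a persistent hub if and only if some individual $u^{\star}$ is a \emph{record-holder at level $d$}, i.e.\ $A_{u^{\star}}(d)=\min_{w\in\mathcal{U}}A_w(d)$, for all sufficiently large $d$: if $u$ is a persistent hub then at the first instant the maximal out-degree equals $d$ the individual $u$ must itself have out-degree $d$, hence $A_u(d)=\min_w A_w(d)$; the converse is immediate. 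So it suffices to show that, almost surely, some individual holds the record at all large levels.

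Two preliminaries. The first inequality in~\eqref{eq:persistent} gives, by a generation-by-generation first-moment computation, $\E{\sum_{u\in\mathcal{U}}e^{-\alpha\mathcal{B}(u)}}=\sum_{n\ge0}c^{n}=(1-c)^{-1}<\infty$, where $c:=\sum_{j\ge1}\E{e^{-\alpha\sum_{i=1}^{j}X_i}}<1$; hence $\sum_{u}e^{-\alpha\mathcal{B}(u)}<\infty$ almost surely, so the process is non-explosive ($\lvert\mathscr{T}_t\rvert<\infty$ for all $t$), every $u\in\mathcal{U}$ is eventually born, and $R_u(d)\to\infty$ for every $u$ (otherwise $u$ would reach infinite out-degree in finite time). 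The second inequality in~\eqref{eq:persistent} implies, via the Kolmogorov three-series theorem (Lemma~\ref{lem:kolmogorov}) --- the needed series conditions being implied by $\prod_{i>K}\E{e^{\alpha(X_i'-X_i)}}<\infty$ --- that for distinct $u,v$ the walk $R_u(d)-R_v(d)=\sum_{i=1}^{d}(X(ui)-X(vi))$ converges almost surely to a finite limit; hence $A_u(d)-A_v(d)$ converges almost surely, and we write $\theta_u:=\lim_{d}\bigl(A_u(d)-A_\varnothing(d)\bigr)$, so $\theta_\varnothing=0$.

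The core of the proof --- and the step I expect to be the main obstacle --- is that almost surely only finitely many individuals are ever record-holders. A record-holder $u$ at level $d$ in particular satisfies $A_u(d)\le A_\varnothing(d)$ for that $d$, i.e.\ $\sup_{d}\bigl(R_\varnothing(d)-R_u(d)\bigr)\ge\mathcal{B}(u)$; thus it is enough to bound $\sum_{u\ne\varnothing}\Prob{\sup_{d}(R_\varnothing(d)-R_u(d))\ge\mathcal{B}(u)}$. Conditioning on the clocks of the strict ancestors of $u$, this overtaking event becomes one about a walk whose exponential is a non-negative submartingale with expectation at most $C_0:=\prod_{i>K}\E{e^{\alpha(X_i'-X_i)}}<\infty$, so Doob's maximal inequality bounds its conditional probability by a constant multiple of $e^{-\alpha(\mathcal{B}(u)-\mathcal{B}(u_{|_1}))}e^{-\alpha R_u(u_1)}$; on taking expectations the sum over the remaining generations of $u$ contributes a geometric factor $(1-c)^{-1}$ and the sum over the first-generation index $u_1$ is finite because $\sum_{j}\E{e^{-\alpha\sum_{i\le j}X_i}}<\infty$ (first inequality) while the residual tail product is at most $C_0$ (second inequality). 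The genuine difficulty is twofold: $\mathcal{B}(u)$ and the root clock share the prefix $X(\varnothing 1),\dots,X(\varnothing u_1)$, and the first $K$ waiting times may fail to have finite exponential moments, so a naive union bound over $d$ diverges and the naive first moment of $e^{\alpha X_i}$ is infinite. These are circumvented by applying Doob in place of the union bound, conditioning also on the (finitely many, almost surely finite) first $K$ root increments, and treating the individuals whose first-generation index is $\le K$ via the recursive self-similarity of the process --- the subtree hanging off $(j)$ being a time-shifted copy --- so that the contributions of those finitely many ``bad'' branches are separately summable.

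Granting this, the record-holders all lie in a random finite set $\mathcal{R}\ni\varnothing$. For large $d$ the level-$d$ record-holders are precisely the $u\in\mathcal{R}$ minimising $A_u(d)$, and since $A_u(d)-A_v(d)\to\theta_u-\theta_v$ this set stabilises once $A_u(d)-A_v(d)$ is eventually of one sign for each pair $u,v\in\mathcal{R}$. The latter follows from a dichotomy: if $\sum_i\Prob{X_i\ne X_i'}=\infty$ then $\lim_d(R_u(d)-R_v(d))$ is non-atomic (a Jessen--Wintner / concentration-function argument), and being independent of $\mathcal{B}(u)-\mathcal{B}(v)$ when $u,v$ are incomparable --- and reducible to this case by passing to a subtree when they are nested --- it follows that $\theta_u\ne\theta_v$ almost surely; if instead $\sum_i\Prob{X_i\ne X_i'}<\infty$, then by Borel--Cantelli the clocks of $u$ and $v$ agree from some index on, so $A_u(d)-A_v(d)$ is eventually constant. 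In either case a minimal element of $\mathcal{R}$ under the resulting eventual ordering is a record-holder at all large levels, hence a persistent hub, completing the proof.
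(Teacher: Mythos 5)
Your proposal covers only half of the theorem: nothing in it addresses Item~\ref{item:non-persistence}, the claim that when $\sum_{i=1}^{\infty}(X_i-X_i')$ diverges almost surely there is no persistent hub. The paper proves this by constructing an increasing $\phi$ so that, for each $j$, with probability at least $1/2$ the child $uj$ overtakes $u$ within $\phi(j)$ steps (via Lemma~\ref{lem:fluctuating-sum} applied to the symmetric walk $\sum_\ell (X(u\ell)-X(uj\ell))$), and then running a second Borel--Cantelli argument over the independent events indexed by $j$ to conclude that every fixed $u$ is overtaken infinitely often. Some such argument is needed and is absent from your write-up.

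For Item~\ref{item:persistent-hub} your architecture is essentially the paper's --- a Doob/first-moment bound showing only finitely many candidates ever lead, followed by the atom/no-atom dichotomy to stabilise the ordering within that finite set --- but the step you yourself flag as the main obstacle is not actually resolved by what you propose. The individuals whose Ulam--Harris labels use only indices $\le K$ are \emph{not} ``finitely many bad branches'': there are $K^m$ of them at depth $m$, and for each of them your chosen comparison (against the root) forces the overtaking walk to involve the root's increments $X(i)$ with $i\le K$, precisely where $\E{e^{\alpha(X_i'-X_i)}}$ may be infinite, so Lemma~\ref{lem:overtake-bound} does not apply to them at all. The ``recursive self-similarity'' you invoke does not terminate: each subtree hanging off $(j)$, $j\le K$, contains its own $K$ bad children, and the per-individual bounds available (each factor $\E{e^{-\alpha\sum_{k\le v_\ell}X_k}}$ can be arbitrarily close to $1$) need not beat the $K^m$ growth. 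The paper's resolution is a genuinely different idea: it conditions on $\mathcal{F}_{\mathcal{B}(K)}$, uses non-explosivity of the truncated $(K-1)$-moderate subprocess (Claim~\ref{clm:finite-k-moderate}) to show that only finitely many such individuals are born before the root's $K$th child, and for everyone born afterwards exploits $\mathcal{B}(a)-\mathcal{B}(K)>0$ so that the comparison with the root can be started from the root's $(K{+}1)$th increment, where the moment hypothesis holds (the sets $A^{+}$, $A^{-}$, $\mathcal{C}_u$, $\mathcal{D}_a$ in the proof of Lemma~\ref{lem:finite-catch-up}). Without this, or an equivalent device, your finiteness claim for the set of record-holders is unproven. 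The remaining ingredients of your sketch (non-explosivity from the first condition of~\eqref{eq:persistent}, the Kolmogorov three-series reduction, and the final pairwise-ordering dichotomy) do match the paper and are sound.
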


\begin{rmq}
    The proof of Theorem~\ref{thm:persistence} uses a novel proof technique that does not rely on classical theory regarding CMJ processes. However, parts of the proof draw inspiration from the proof of~\cite[Theorem~2.5]{inhom-sup-pref-attach}, and the proof of~\cite[Theorem~1.1]{oliveira-spencer}. See also Remark~\ref{rem:bas} below. {\small\ensymboldremark }
\end{rmq}

\begin{rmq} \label{rem:imp-0}
Foundational results related to CMJ branching processes (see, e.g., \cite{nerman_81, nerman-jagers-84, jagers-nerman89, jagers-nerman-96, olofsson-x-log-x, iksanov2023asymptotic}) often assume the existence of $\alpha' >0$ (often called a \emph{Malthusian parameter}) such that 
\begin{equation} \label{eq:malth-ass}
\sum_{j=1}^{\infty} \E{e^{-\alpha' \sum_{i=1}^{j} X_{i}}} = 1.
\end{equation} 
The first condition in~\eqref{eq:persistent} is (at least morally) a weaker condition, since, in principle, it is possible that the map $\lambda \mapsto \sum_{j=1}^{\infty} \E{e^{-\lambda \sum_{i=1}^{j} X_{i}}}$ is discontinuous. In particular, it is unclear whether or not there exist counter-examples where
\[
\lambda' := \inf\left\{\lambda > 0\colon \sum_{j=1}^{\infty} \E{e^{-\lambda \sum_{i=1}^{j} X_{i}}} < \infty \right\} \quad \text{but} \quad \sum_{j=1}^{\infty} \E{e^{-\lambda' \sum_{i=1}^{j} X_{i}}} < 1,
\]
in which case~\eqref{eq:malth-ass} cannot be satisfied. Such counter-examples, however, are known to exist when the values $(X_{i})_{i \in \mathbb{N}}$ are correlated, depending on a random `weight' (see, e.g.,~\cite{dereich-mailler-morters, rec-trees-fit}). {\small\ensymboldremark }
\end{rmq}

\begin{rmq} \label{rem:general-conj}
   We believe~\eqref{eq:persistent} is not optimal. A more general conjecture, inspired by~\cite[Theorem~1.4]{leadership} is that, under Assumption~\ref{ass:gen-ass}, a persistent hub emerges with probability zero or one, and with probability one if and only if $\sum_{i=1}^{\infty}(X_{i} - X'_{i})$ converges almost surely. Note that Assumption~\ref{ass:gen-ass} and the condition from~\eqref{eq:persistent} that 
   \[\prod_{i=K+1}^{\infty}\E{e^{\alpha (X'_{i} - X_{i}) }} < \infty \quad \text{imply that} \quad \sum_{i=1}^{\infty}(X_{i} - X'_{i}) \quad \text{converges almost surely.} \]
   This fact is proven explicitly, and used, in the proof of Item~\ref{item:persistent-hub} of Theorem~\ref{thm:persistence}. {\small\ensymboldremark }
\end{rmq}

\begin{rmq} \label{rem:bas}
    Suppose that Assumption~\ref{ass:gen-ass} is satisfied, and that for each $j \in \mathbb{N}$, we have $\mu_{j} := \E{\sum_{i=j+1}^{\infty} X_{i}} < \infty$. Moreover, assume that for some $c > 0$
    \begin{equation} \label{eq:bas-assumptions}
    \sum_{j=1}^{\infty} \E{e^{-c \mu_{j}^{-1} \sum_{i=1}^{j} X_{i}}} < \infty \quad \text{and} \quad \limsup_{j \to \infty} \E{e^{c \mu_{j}^{-1}\sum_{i=j+1}^{\infty} X_{i} }} < \infty.
    \end{equation}
    Then, the result~\cite[Theorem~2.5]{inhom-sup-pref-attach} implies that the tree $\bigcup_{n=1}^{\infty} \mathscr{T}_{\tau_{n}}$ contains a node of infinite degree\footnote{Note that Items~1, 3 and 4 of \cite[Assumption~2.2]{inhom-sup-pref-attach} are implied by Assumption~\ref{ass:gen-ass}, and Equation~\eqref{eq:bas-assumptions} implies that Items 2 and 5 of \cite[Assumption~2.2]{inhom-sup-pref-attach} are satisfied with $Y_{n} = \sum_{i=n+1}^{\infty} X_{i}$.}.   
     We claim, without proof, that a similar approach to the proof of~\cite[Theorem~2.5]{inhom-sup-pref-attach} actually shows that $(\mathscr{T}_{\tau_{n}})_{n \in \mathbb{N}}$ is persistent under the same assumptions. As a comment for the reader familiar with some of the technical details of~\cite{inhom-sup-pref-attach}, this works by replacing the event ``$a$ explodes before each of its ancestors'', by the event ``$a$ catch up to each of its ancestors before an ancestor explodes''. {\small\ensymboldremark }
\end{rmq}

The following theorem provides criteria for persistent hubs to be unique. 
\begin{thm} \label{thm:unique}
Assume $(X_{j})_{j \in \mathbb{N}}, (X'_{j})_{j \in \mathbb{N}}$ are defined as in~\eqref{eq:def-ex} and Assumption~\ref{ass:gen-ass} is satisfied. Moreover, assume~\eqref{eq:persistent} is satisfied 
and, in addition, one of the following conditions is satisfied:
\begin{enumerate}
    \item \label{item:borel-cantelli-ass} We have
        \begin{equation} \label{eq:borel-cantelli}
        \sum_{k=1}^{\infty} \Prob{0 \leq \sum_{j=1}^{k} (X'_{j} - X_{j}) - \sum_{j=1}^{k} X_{j} < X_{k+1}} < \infty.
        \end{equation}
    \item \label{item:conv-series-and-borel} It is the case that both of the following conditions are satisfied:
    \begin{enumerate}
        \item We have $\sum_{j=1}^{\infty} \Prob{X_{j} - X'_{j} \neq 0} = \infty$, or for some $j \in \mathbb{N}$, we have $\Prob{X_{j} \neq X'_{j}} = 1$.
        \item For any $\eps > 0$ we have $\sum_{j=1}^{\infty} \Prob{X_{j} > \eps} < \infty$. 
    \end{enumerate}
\end{enumerate}    
Then, almost surely, in the process $(\mathscr{T}_{\tau_{n}})_{n \in \mathbb{N}_{0}}$,
there is a unique persistent hub. 
\end{thm}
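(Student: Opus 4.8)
The plan is to leverage Theorem~\ref{thm:persistence}, which under~\eqref{eq:persistent} already guarantees the almost sure existence of a persistent hub. So the task reduces to showing that, almost surely, no two distinct nodes can both be persistent hubs — equivalently, that with probability one there are \emph{not} infinitely many $n$ for which the maximal out-degree in $\mathscr{T}_{\tau_n}$ is attained at two (or more) distinct nodes simultaneously. The natural strategy is: if $u$ and $v$ are both persistent hubs, then for infinitely many $n$ we have $\outdeg{u,\mathscr{T}_{\tau_n}} = \outdeg{v,\mathscr{T}_{\tau_n}}$ and this common value is the running maximum. I would encode a would-be tie via the birth times: $u$ has out-degree $\geq k$ exactly when $\mathcal B(u) + \sum_{i=1}^{k} X(ui) \leq \tau_n$, so ``$u$ and $v$ have equal out-degree $k$ at some common time'' forces the two partial-sum processes $\big(\mathcal B(u)+\sum_{i\le k}X(ui)\big)_k$ and $\big(\mathcal B(v)+\sum_{i\le k}X(vi)\big)_k$ to interleave in a precise way infinitely often. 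By the i.i.d. structure of the displacement families (Assumption~\ref{ass:gen-ass}, Item~\ref{item:mutual-independence}, and~\eqref{eq:def-ex}), the difference of these two processes, after accounting for the birth-time gap, is distributed like $\sum_{j\le k}(X_j - X'_j)$ shifted by an independent constant; a tie at level $k$ then corresponds to this difference process landing in an interval of width $\sim X_{k+1}$ around a fixed value. This is exactly what~\eqref{eq:borel-cantelli} controls.

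More concretely, under hypothesis~\ref{item:borel-cantelli-ass}, I would fix an arbitrary pair of candidate hubs, condition on the (a.s.\ finite) discrepancy between their birth times and early displacements, and apply the first Borel--Cantelli lemma: the events ``the $k$-th partial sums of the two displacement sequences straddle each other, i.e.\ $0 \le \sum_{j\le k}(X'_j - X_j) - \sum_{j\le k} X_j < X_{k+1}$'' have summable probabilities by~\eqref{eq:borel-cantelli}, so only finitely many occur, and beyond that point one node strictly dominates the other in out-degree at all sufficiently large times. Summing (or rather, taking a union bound) over the countably many pairs $u,v \in \mathcal U$ — with the birth-time offset between $u$ and $v$ handled by an extra conditioning, which only helps since we can absorb a deterministic shift — gives that almost surely no pair can tie at the maximum infinitely often, hence the persistent hub from Theorem~\ref{thm:persistence} is unique. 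The delicate point is that the shift between $\mathcal B(u)$ and $\mathcal B(v)$ is random and not independent of everything, so I would either (i) restrict to pairs where one is an ancestor of the other, reducing to the $\varnothing$ versus $u$ case, and separately to pairs in disjoint subtrees where independence is clean, or (ii) observe that conditionally on $\mathcal F_{\tau_m}$ for $m$ large, the relevant future displacements are independent of the past with the correct law, so~\eqref{eq:borel-cantelli} applies to the conditional probabilities as well.

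For hypothesis~\ref{item:conv-series-and-borel}, the argument is different in flavour: condition (b), $\sum_j \Prob{X_j > \eps} < \infty$ for every $\eps>0$, forces $X_j \to 0$ a.s.\ and in fact $\sum_j X_j$-tails to vanish fast, so that the running maximum out-degree of the whole tree grows in a controlled, essentially deterministic way and the ``gap'' $X_{k+1}$ in the tie-window shrinks to $0$; meanwhile condition (a) ensures the two displacement sequences are genuinely different (not almost-surely identical), so a tie has vanishing probability. I would show condition (b) implies~\eqref{eq:borel-cantelli} holds — heuristically, $\Prob{0 \le (\text{something}) < X_{k+1}} \le \Prob{X_{k+1} > \eps} + \Prob{0 \le (\text{something}) < \eps}$, and the first term is summable by (b) while the second, using (a) to rule out a persistent atom of the difference process at $0$, can be made summable after choosing $\eps$ appropriately along a subsequence — thereby reducing case~\ref{item:conv-series-and-borel} to case~\ref{item:borel-cantelli-ass}. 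The main obstacle throughout, and where I expect the real work to lie, is the bookkeeping for the random birth-time offsets between arbitrary pairs of nodes: making rigorous the claim that a tie between $u$ and $v$ at the maximum, for large $n$, is captured by a Borel--Cantelli event to which~\eqref{eq:borel-cantelli} genuinely applies — in particular ensuring that the conditioning needed to decouple the offset does not destroy the i.i.d.\ structure of the tail displacements, and that the union over the countably infinite vertex set $\mathcal U$ still yields a null set.
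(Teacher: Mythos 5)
Your overall architecture matches the paper's: existence comes from Theorem~\ref{thm:persistence}, and uniqueness reduces to showing that, for each fixed pair $u,v$, the event that $\outdeg{u,\mathscr{T}_{\tau_n}}=\outdeg{v,\mathscr{T}_{\tau_n}}$ for infinitely many $n$ is null, followed by a countable union bound. Your translation of a tie into the straddling event $0\le \mathcal{B}(u)-\mathcal{B}(v)+\sum_{i=1}^{k}(X(ui)-X(vi))< X(u(k+1))$ and the first Borel--Cantelli application under hypothesis~\ref{item:borel-cantelli-ass} is exactly the paper's argument, and your concerns about the birth-time offset are handled there just as you suggest (choose the index beyond which the tail displacements are independent of the offset).

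The genuine gap is in your treatment of hypothesis~\ref{item:conv-series-and-borel}. You propose to reduce it to hypothesis~\ref{item:borel-cantelli-ass} by showing that (a) and (b) imply the summability~\eqref{eq:borel-cantelli}, splitting $\Prob{0\le S_k < X_{k+1}}\le \Prob{X_{k+1}>\eps_k}+\Prob{0\le S_k<\eps_k}$ and claiming the second term can be made summable ``using (a) to rule out a persistent atom.'' This step fails: condition (a) together with Lemma~\ref{lem:no-atom} only gives the \emph{qualitative} statement that the limiting random variable $\sum_{j}(X_j-X'_j)$ has no atom; it gives no quantitative anti-concentration of the \emph{partial} sums. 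For instance, if $\Prob{X_j\neq X'_j}=1/j$ then $\sum_j\Prob{X_j\neq X'_j}=\infty$ (so (a) holds), yet the partial difference sum can carry mass of order $\prod_{j\le k}(1-1/j)\asymp 1/k$ at a single point, so $\sum_k\Prob{0\le S_k<\eps_k}=\infty$ for \emph{every} choice of $\eps_k>0$, and the reduction to~\eqref{eq:borel-cantelli} is impossible in general. The paper's argument avoids summability entirely: by (b) and Borel--Cantelli, $X(u(k+1))\le\eps$ for all but finitely many $k$, so on the event that the straddling occurs infinitely often the almost surely convergent quantity $\mathcal{B}(u)-\mathcal{B}(v)+\sum_{i=1}^{\infty}(X(ui)-X(vi))$ is forced to lie in $[0,\eps)$ for every $\eps>0$, hence to equal $0$ exactly; by (a) and Lemma~\ref{lem:no-atom} this limit has no atom at $0$, so the infinitely-often event itself is null. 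You should replace your reduction by this direct squeeze-to-the-atom argument.
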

\begin{rmq}
    Theorem~\ref{thm:unique} is closely related~\cite[Theorem~1.4 \& Corollary~1.8]{leadership}, but we include a proof for completeness. {\small\ensymboldremark }
\end{rmq}

\subsection{Description of preferential attachment trees and related results} \label{sec:preferential}
One particular scenario in which Theorem~\ref{thm:persistence} and Theorem~\ref{thm:unique} may be applied is to variants of preferential attachment trees where new nodes connect to existing nodes with probability proportional to a random function of the degree of that node. In particular, suppose that we construct a sequence of directed trees in the following way. For each $j \in \mathbb{N}_{0}$ suppose that $(F_{j}(k))_{k \in \mathbb{N}_{0}}$ is a collection of mutually independent random variables, identically distributed across $j \in \mathbb{N}_{0}$.  
At time $0$ we define the tree $\mathcal{T}_{0}$ consisting of a single node labelled $0$. Then, recursively, given the tree $\mathcal{T}_{k}$ at time $k \in \mathbb{N}_0$, and the values of the random variables $(F_{i}(\outdeg{i, \mathcal{T}_{k}}))_{i \leq k}$, 
\begin{enumerate}
    \item Form $\mathcal{T}_{k+1}$ by sampling a node $j$ from $\mathcal{T}_k$ with probability 
    \begin{equation} \label{eq:discrete-dynamics}
        \frac{F_{j}(\outdeg{j, \mathcal{T}_{k}})}{\mathcal{Z}_{k}}, \quad \text{ with } \quad \mathcal{Z}_{k} := \sum_{j=0}^{k} F_{j}(\outdeg{j, \mathcal{T}_{k}})
    \end{equation}
and connecting $j$ with an edge directed outwards to a new node labelled $k+1$.
    \item Sample the random variables $F_{j}(\outdeg{j, \mathcal{T}_{k+1}}) = F_{j}(\outdeg{j, \mathcal{T}_{k}} +1)$ and $F_{k+1}(\outdeg{k+1, \mathcal{T}_{k+1}})$. As the degrees of all the other nodes remain unchanged, this allows one to determine the values $(F_{i}(\outdeg{i, \mathcal{T}_{k+1}}))_{i \leq k+1}$ for the next step in the process. 
\end{enumerate}
Let $(F(k))_{k \in \mathbb{N}_0}$ denotes an independent sequence of random variables, with $(F(k))_{k \in \mathbb{N}_0} \sim (F_{0}(k))_{k \in \mathbb{N}_0}$.

\begin{thm} \label{thm:main}
    Suppose that $(\mathcal{T}_{i})_{i \in \mathbb{N}_{0}}$ is a generalised preferential attachment tree. Then,
    \begin{enumerate} 
        \item \label{item:non-preference} If \[\sum_{j=0}^{\infty} \frac{1}{F(j)^2} = \infty \quad \text{almost surely},\] then $(\mathcal{T}_{i})_{i \in \mathbb{N}_{0}}$ almost surely does not have a persistent hub. 
        \item \label{item:preference-one} Suppose that the following two conditions are satisfied: 
        \begin{enumerate}
            \item There exists a sequence $(x_{j})_{j \in \mathbb{N}_0}$ with $\sum_{j=0}^{\infty} \frac{1}{x_{j}^2} < \infty$ such that,
				\begin{equation} \label{eq:det-lower-bound}           
             		F(j) \geq x_{j} \quad \text{almost surely.}
             	\end{equation} 
            \item There exists a $\lambda > 0$ such that 
                \begin{equation} \label{eq:lambda-eq}
                \sum_{i=0}^{\infty} \prod_{j=0}^{i}\E{\frac{F(j)}{F(j) + \lambda}} < \infty.
                \end{equation}
        \end{enumerate}
        Then $(\mathcal{T}_{i})_{i \in \mathbb{N}_{0}}$ has a unique persistent hub almost surely. 
        \end{enumerate}
\end{thm}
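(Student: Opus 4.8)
\emph{Strategy.} The plan is to realise $(\mathcal{T}_{n})_{n\in\mathbb{N}_{0}}$ as the jump chain $(\mathscr{T}_{\tau_{n}})_{n\in\mathbb{N}_{0}}$ of a Crump--Mode--Jagers process and then invoke Theorem~\ref{thm:persistence} and Theorem~\ref{thm:unique}. For each $u\in\mathcal{U}$ let $(F^{(u)}(k))_{k\in\mathbb{N}_{0}}$ be an independent copy of $(F(k))_{k\in\mathbb{N}_{0}}$, and, given these weights, let the displacements $(X(uj))_{j\in\mathbb{N}}$ be (conditionally) independent with $X(uj)\sim\Exp{F^{(u)}(j-1)}$. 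In the notation of~\eqref{eq:def-ex} this means $(X_{j})_{j\in\mathbb{N}}$ is an independent sequence of mixed exponentials with $\Prob{X_{j}>t}=\E{e^{-tF(j-1)}}$. Since $0<F(k)<\infty$ almost surely, each $X_{j}$ is almost surely finite with no atom at $0$, so Assumption~\ref{ass:gen-ass} holds automatically; and by the standard exponential-race computation (conditionally on the weights, every clock is memoryless), the jump chain follows exactly the dynamics~\eqref{eq:discrete-dynamics}, so that, after relabelling nodes in order of birth, $(\mathscr{T}_{\tau_{n}})_{n\in\mathbb{N}_{0}}$ and $(\mathcal{T}_{n})_{n\in\mathbb{N}_{0}}$ agree in law (cf.~\cite{rudas,bhamidi}). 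As the existence and uniqueness of a persistent hub depends only on this law, it remains to verify the hypotheses of Theorem~\ref{thm:persistence} and Theorem~\ref{thm:unique} for this $(X_{j})_{j\in\mathbb{N}}$.

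\emph{Non-existence (Item~\ref{item:non-preference}).} By Item~\ref{item:non-persistence} of Theorem~\ref{thm:persistence} it suffices to show that $\sum_{i\ge1}(X_{i}-X'_{i})$ diverges almost surely. The variables $(X_{i}-X'_{i})_{i}$ are independent and symmetric, so by the three-series theorem (Lemma~\ref{lem:kolmogorov}) this series converges almost surely iff $\sum_{i\ge1}\E{(X_{i}-X'_{i})^{2}\wedge 1}<\infty$, while $\sum_{j\ge0}1/F(j)^{2}$ converges almost surely iff $\sum_{j\ge0}\E{F(j)^{-2}\wedge 1}<\infty$. These are tied together by the elementary two-sided bound
\[
\E{(X_{i}-X'_{i})^{2}\wedge 1}\asymp\E{F(i-1)^{-2}\wedge 1},
\]
with universal implied constants: conditioning on the pair of rates $(F(i-1),F'(i-1))$, the upper bound follows from Jensen's inequality and the fact that the second moment of $\Exp{\mu}$ equals $2/\mu^{2}$, while for the lower bound one distinguishes whether $\min(F(i-1),F'(i-1))$ is at most $1$ or larger, and in each case bounds from below the spread of a difference of two exponentials. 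Hence $\sum_{j}1/F(j)^{2}=\infty$ almost surely forces $\sum_{i}(X_{i}-X'_{i})$ to diverge almost surely, as required.

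\emph{Existence and uniqueness (Item~\ref{item:preference-one}).} Since $X_{j}\sim\Exp{F(j-1)}$ is a mixed exponential, $\E{e^{-\alpha X_{j}}}=\E{F(j-1)/(F(j-1)+\alpha)}$ for every $\alpha>0$, and, provided $F(j-1)>\alpha$ almost surely, $\E{e^{\alpha X_{j}}}=\E{F(j-1)/(F(j-1)-\alpha)}$. The first identity gives, for every $\alpha>0$,
\[
\sum_{j=1}^{\infty}\E{e^{-\alpha\sum_{i=1}^{j}X_{i}}}=\sum_{i=0}^{\infty}\prod_{j=0}^{i}\E{\frac{F(j)}{F(j)+\alpha}},
\]
identifying the first quantity in~\eqref{eq:persistent} with the one in~\eqref{eq:lambda-eq}. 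By~\eqref{eq:lambda-eq} the right-hand side is finite when $\alpha=\lambda$; as each factor is nonincreasing in $\alpha$ and tends to $0$ as $\alpha\to\infty$, dominated convergence shows the whole sum tends to $0$, so it is $<1$ for some fixed $\alpha$, which is the first condition in~\eqref{eq:persistent}. Fix such an $\alpha$. Since $\sum 1/x_{j}^{2}<\infty$ forces $x_{j}\to\infty$, by~\eqref{eq:det-lower-bound} there is $K$ with $F(j)\ge x_{j}\ge 2\alpha$ for all $j\ge K$; then for $i>K$ both moment generating functions above are finite, and expanding,
\[
\E{e^{\alpha(X'_{i}-X_{i})}}=\E{\frac{F(i-1)}{F(i-1)-\alpha}}\E{\frac{F(i-1)}{F(i-1)+\alpha}}\le 1+\frac{8\alpha^{2}}{3\,x_{i-1}^{2}},
\]
so that $\prod_{i=K+1}^{\infty}\E{e^{\alpha(X'_{i}-X_{i})}}\le\exp\!\left(\tfrac{8\alpha^{2}}{3}\sum_{j\ge0}x_{j}^{-2}\right)<\infty$, which is the second condition in~\eqref{eq:persistent}. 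By Item~\ref{item:persistent-hub} of Theorem~\ref{thm:persistence} a persistent hub exists almost surely, and for uniqueness we verify condition~\ref{item:conv-series-and-borel} of Theorem~\ref{thm:unique}: part~(a) holds because $X_{j},X'_{j}$ are independent with continuous laws, hence $\Prob{X_{j}\neq X'_{j}}=1$; part~(b) holds because $\Prob{X_{j}>\eps}=\E{e^{-\eps F(j-1)}}\le e^{-\eps x_{j-1}}$ and $x_{j}\to\infty$ makes $\sum_{j}e^{-\eps x_{j-1}}<\infty$ for every $\eps>0$ (exponentials dominate $x_{j}^{-2}$). Theorem~\ref{thm:unique} then gives a unique persistent hub almost surely.

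\emph{Main obstacle.} Apart from the two-sided estimate used in the non-existence part, everything reduces to elementary identities for Laplace transforms of exponentials and two applications of the three-series theorem. The substantive point is the \emph{lower} bound $\E{(X_{i}-X'_{i})^{2}\wedge 1}\ge c\,\E{F(i-1)^{-2}\wedge 1}$: one must show, uniformly in the (possibly heavy-tailed) random rates $\mu,\mu'$, that a difference of two independent exponentials of rates $\mu,\mu'$ has truncated second moment of order $\min(1,\mu^{-2}+\mu'^{-2})$, and this is where the remaining case analysis lies.
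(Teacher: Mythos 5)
Your proposal is correct, and for Item~\ref{item:preference-one} it is essentially the paper's own argument: the exponential embedding, the identity $\sum_{j\geq 1}\E{e^{-\alpha\sum_{i=1}^{j}X_{i}}}=\sum_{i\geq 0}\prod_{j=0}^{i}\E{F(j)/(F(j)+\alpha)}$, pushing $\alpha$ up until this sum drops below $1$, using $F(j)\geq x_{j}$ to control $\prod_{i}\E{e^{\alpha(X'_{i}-X_{i})}}$ (your explicit bound $1+\tfrac{8\alpha^{2}}{3x_{i-1}^{2}}$ plays the role of the paper's $\prod_{i}x_{i}^{2}/(x_{i}^{2}-\eta^{2})$ estimate, with a slightly different order of quantifiers: you fix $\alpha$ and choose $K$ with $x_{j}\geq 2\alpha$, the paper lets a parameter $\eta_{k}\to\infty$ and picks $K$ with $\eta_{K}\geq\alpha$), and verifying Item~\ref{item:conv-series-and-borel} of Theorem~\ref{thm:unique} exactly as the paper does; your bound $\Prob{X_{j}>\eps}\leq e^{-\eps x_{j-1}}$ is if anything cleaner than the paper's use of $e^{-x}\leq x^{-2}$. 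The genuine divergence is in Item~\ref{item:non-preference}: the paper simply cites \cite[Theorem~1.4 \& Theorem~1.10]{leadership} for the equivalence between almost sure convergence of $\sum_{i}(X_{i}-X'_{i})$ and of $\sum_{j}F(j)^{-2}$, whereas you derive it directly from the three-series theorem applied to both series together with the two-sided comparison $\E{(X_{i}-X'_{i})^{2}\wedge 1}\asymp\E{F(i-1)^{-2}\wedge 1}$. This buys a self-contained proof at the cost of the lower-bound estimate you flag as the main obstacle; that estimate does hold and is short — conditionally on the rates $(\mu,\mu')$, taking $t=1\wedge\mu^{-1}$ one has $\Prob{X-X'\geq t}=e^{-\mu t}\mu'/(\mu+\mu')$ and $\Prob{X'-X\geq t}=e^{-\mu' t}\mu/(\mu+\mu')$, and whichever of the two has the larger fraction (at least $1/2$) also has exponential factor at least $e^{-1}$, giving $\E{(X-X')^{2}\wedge 1}\geq\tfrac{1}{2e}(1\wedge\mu^{-2})$ — but as written it is a sketch, so you should include this short case analysis if you want Item~\ref{item:non-preference} to stand without the external citation.
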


\begin{rmq}
    Similar to Remark~\ref{rem:general-conj}, we believe Theorem~\ref{thm:main} can be improved. 
    Inspired by~\cite[Theorem~1.10]{leadership} we conjecture that, a unique persistent hub emerges with probability zero or one, and with probability one if and only if $\sum_{j=0}^{\infty} \frac{1}{F(j)^2} < \infty$ almost surely. {\small\ensymboldremark }
\end{rmq}

Suppose that the values $(F(j))_{j \in \mathbb{N}_{0}}$  are given by a deterministic sequence $(f(j))_{j \in \mathbb{N}_{0}}$. Then, by choosing the sequence $(x_{j})_{j\in \mathbb{N}_{0}} = (f(j))_{j \in \mathbb{N}_{0}}$ in Theorem~\ref{thm:main}, we have the following immediate corollary, whose proof we omit. 

\begin{cor} \label{cor:pers-det-trees}
Suppose that $(\mathcal{T}_{i})_{i \in \mathbb{N}_{0}}$ is a generalised preferential attachment tree, with deterministic values $(f(j))_{j \in \mathbb{N}_{0}}$. Then
\begin{enumerate}
\item If $\sum_{j=0}^{\infty} \frac{1}{f(j)^2} = \infty$, then $(\mathcal{T}_{i})_{i \in \mathbb{N}_{0}}$ almost surely does not have a persistent hub.
\item \label{item:det-persistent} If $\sum_{j=0}^{\infty} \frac{1}{f(j)^2} < \infty$ and for some $\lambda > 0$, 
\begin{equation} \label{eq:gen-malth}
\sum_{i=0}^{\infty} \prod_{j=0}^{i}\frac{f(j)}{f(j) + \lambda} < \infty,
\end{equation} 
then $(\mathcal{T}_{i})_{i \in \mathbb{N}_{0}}$ has a unique persistent hub almost surely. 
\end{enumerate}
\end{cor}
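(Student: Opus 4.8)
The plan is to obtain Corollary~\ref{cor:pers-det-trees} as an immediate specialisation of Theorem~\ref{thm:main} to the case of non-random attachment weights; no separate argument is really needed. Concretely, one takes $F_{j}(k) \equiv f(k)$ for all $j,k$, so that the sequence $(F(j))_{j \in \mathbb{N}_0}$ featuring in Theorem~\ref{thm:main} is the degenerate random sequence equal to the deterministic sequence $(f(j))_{j \in \mathbb{N}_0}$, and the dynamics \eqref{eq:discrete-dynamics} reduce to the usual deterministic-weight generalised preferential attachment model, since $\mathcal{Z}_{k} = \sum_{j=0}^{k} f(\outdeg{j, \mathcal{T}_{k}})$ and all attachment probabilities are then deterministic functions of the current tree.

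For the first assertion, I would simply note that if $\sum_{j=0}^{\infty} 1/f(j)^2 = \infty$ then, as $(F(j))_{j \in \mathbb{N}_0}$ is deterministic, the event $\{\sum_{j=0}^{\infty} 1/F(j)^2 = \infty\}$ has probability one, so Item~\ref{item:non-preference} of Theorem~\ref{thm:main} gives that $(\mathcal{T}_{i})_{i \in \mathbb{N}_0}$ almost surely has no persistent hub.

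For the second assertion, the task is to check the two hypotheses of Item~\ref{item:preference-one} of Theorem~\ref{thm:main} under the assumptions $\sum_{j=0}^{\infty} 1/f(j)^2 < \infty$ and \eqref{eq:gen-malth} for some $\lambda > 0$. For hypothesis (a) I would choose $x_{j} = f(j)$: then $\sum_{j=0}^{\infty} 1/x_{j}^2 = \sum_{j=0}^{\infty} 1/f(j)^2 < \infty$, and the bound $F(j) \geq x_{j}$ from \eqref{eq:det-lower-bound} holds deterministically, hence almost surely. For hypothesis (b) I would use the same $\lambda$ and observe that, $F(j) = f(j)$ being deterministic, $\E{F(j)/(F(j)+\lambda)} = f(j)/(f(j)+\lambda)$, so the series in \eqref{eq:lambda-eq} coincides with the series in \eqref{eq:gen-malth} and is therefore finite by assumption. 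Theorem~\ref{thm:main} then yields a unique persistent hub almost surely.

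I do not expect any genuine obstacle: every step is a direct substitution into Theorem~\ref{thm:main}, and the only points requiring (trivial) care are confirming that plugging in deterministic weights really recovers the intended model and that ``almost sure'' statements about a deterministic sequence are automatic. A strictly unnecessary alternative would be to re-run the entire proof of Theorem~\ref{thm:main} (which presumably passes through a CMJ embedding with, e.g., exponential waiting times of rates $f(j)$) in the deterministic setting, but specialisation is plainly the efficient route, and is why the paper omits a direct proof.
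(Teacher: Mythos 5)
Your proposal is correct and coincides with the paper's own treatment: the paper states the corollary as an immediate consequence of Theorem~\ref{thm:main} obtained by choosing $(x_{j})_{j\in \mathbb{N}_{0}} = (f(j))_{j \in \mathbb{N}_{0}}$ and omits any further proof. Your verification of hypotheses (a) and (b) by direct substitution of the deterministic weights is exactly the intended (and essentially trivial) argument.
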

{\small \hfill $\blacksquare$}
\begin{rmq} \label{rem:imp-1}
    Corollary~\ref{cor:pers-det-trees} improves on an existing result related to the preferential attachment tree by Banerjee and Bhamidi in~\cite{banerjee-bhamidi}. One assumption from that paper is that $\inf_{i \geq 0} f(i) > 0$ - this is not needed for Corollary~\ref{cor:pers-det-trees}.  Another assumption from~\cite{banerjee-bhamidi} is that there exists some $\lambda' > 0$ such that
    \begin{equation} \label{eq:malth-ban-bham}
    1 < \sum_{i=0}^{\infty} \prod_{j=0}^{i}\frac{f(j)}{f(j) + \lambda'} < \infty. 
    \end{equation}
    This assumption ensures the existence of a solution to~\eqref{eq:malth-ass}, so that classical results related to CMJ branching processes may be applied. It should be noted, however, that in~\cite{banerjee-bhamidi}, this classical theory is used to derive asymptotics of the maximal degree when $\sum_{j=0}^{\infty} \frac{1}{f(j)^2} = \infty$ and~\eqref{eq:malth-ban-bham} is satisfied, whilst in~\cite{banerjee-bhamidi-centrality} criteria are given for persistence of more general centrality measures than degree centrality (persistent hubs).
    
    It is not immediately clear whether or not there exist examples where~\eqref{eq:gen-malth} is satisfied, but~\eqref{eq:malth-ban-bham} is not. However,~\eqref{eq:gen-malth} is easier to verify in many cases, as is shown by Example~\ref{ex:bounded-linear} below. {\small\ensymboldremark }
\end{rmq}

\begin{ex} \label{ex:bounded-linear}
Suppose that, for some constant $C_{0} > 0$, for all $i \in \mathbb{N}_{0}$ we have 
\begin{equation} \label{eq:sub-linear-growth}
f(i) \leq C_{0} (i+1).
\end{equation}
Then,~\eqref{eq:gen-malth} is always satisfied. Indeed, for any $\lambda > 0$, we have 
\begin{equation} \label{eq:gamma-bound}
\sum_{i=0}^{\infty} \prod_{j=0}^{i} \frac{f(j)}{f(j) + \lambda} \leq \sum_{i=0}^{\infty} \prod_{j=0}^{i} \frac{C_0 (j+1)}{C_{0}(j+1) + \lambda} = \sum_{i=0}^{\infty} \frac{\Gamma(i+2) \Gamma(\lambda/C_0)}{\Gamma(i+2+\lambda/C_0)}. 
\end{equation}
It is well-known, for example by Stirling's approximation, that we can find a constant $C_1 > 0$ such that, for all $i \in \mathbb{N}_{0}$, we have $\frac{\Gamma(i+2)}{\Gamma(i+2+\lambda/C_0)} \leq C_{1} (i+1)^{-\lambda/C_{0}}$. Therefore, for any $\lambda > 2C_0$, say, we may bound the right-side of~\eqref{eq:gamma-bound} so that 
\[
\sum_{i=0}^{\infty} \prod_{j=0}^{i} \frac{f(j)}{f(j) + \lambda} \leq C_1 \Gamma(\lambda/C_0) \sum_{i=1}^{\infty} (i+1)^{-\lambda/C_{0}} < C_1 \Gamma(\lambda/C_0) \sum_{i=1}^{\infty} (i+1)^{-2} < \infty,
\]
thus confirming~\eqref{eq:gen-malth}. In order to show~\eqref{eq:malth-ban-bham} we would have to bound the left side of~\eqref{eq:gamma-bound} from below, which is more difficult in general. {\small\ensymboldremark }
\end{ex}
If~\eqref{eq:sub-linear-growth} is not satisfied, it must be the case that
\begin{equation} \label{eq:limsup-infinite}
\limsup_{i \to \infty} \frac{f(i)}{i+1} = \infty.
\end{equation}
This indicates that there is somehow a `high' degree of reinforcement, hence one would expect it to be more likely that there is a persistent hub. A partial result in this direction is Theorem~\ref{thm:superlinear} below. 
\begin{rmq}
 Some previous results in the literature concerning cases where~\eqref{eq:limsup-infinite} may be satisfied are the following:
\begin{enumerate}
    \item In~\cite{galashin}, Galashin showed that whenever $f(x)$ is convex (as a function from $\mathbb{R} \rightarrow \mathbb{R}$) and unbounded, there exists a persistent hub. 
    \item In~\cite{oliveira-spencer} the authors showed that a more extreme effect emerges when $f(n) = (n+1)^{p}$ for $p > 1$: the infinite tree $\bigcup_{i=1}^{\infty} \mathcal{T}_{i}$ contains a unique node of infinite degree, and implicitly showed that the degree of every other node in $\bigcup_{i=1}^{\infty} \mathcal{T}_{i}$ is bounded almost surely, hence implying that $(\mathcal{T}_{i})_{i \in \mathbb{N}}$ is persistent.
    \item As a particular application of~\cite[Theorem~3.4]{inhom-sup-pref-attach} (see also \cite[Remark~3.19]{inhom-sup-pref-attach}), the result of~\cite{oliveira-spencer} can be extended to show the following. Suppose $\mu_{n} := \sum_{j=n}^{\infty} \frac{1}{f(j)}$. Then, assuming $\mu_{0} < \infty$ and for some $c < 1$,
    \begin{equation} \label{eq:laplace-sum}
    \sum_{i=0}^{\infty} \prod_{j=0}^{\infty} \frac{f(j)}{f(j) + c\mu_{i}^{-1}} < \infty,
    \end{equation}
    $\bigcup_{i=1}^{\infty} \mathcal{T}_{i}$ contains a unique node of infinite degree. We claim, without proof, that~\eqref{eq:laplace-sum} is satisfied under the assumption that for all $n \in \mathbb{N}_0$, we have $f(n) \geq C (n + 1) \log{(n+2)}^{\alpha}$ for $\alpha > 2$ and $C > 0$. Moreover, in a similar manner to Remark~\ref{rem:bas}, we also claim that these results may be extended to show that, under the same conditions, $(\mathcal{T}_{i})_{i \in \mathbb{N}_0}$ contains a unique persistent hub. We omit proofs of both these claims, since they are anyway superseded by Theorem~\ref{thm:superlinear}. {\small\ensymboldremark }
\end{enumerate} 
\end{rmq}

\begin{thm} \label{thm:superlinear}
Suppose that $(\mathcal{T}_{i})_{i \in \mathbb{N}_{0}}$ is a generalised preferential attachment tree, with deterministic values $(f(j))_{j \in \mathbb{N}_{0}}$. Moreover, assume that, for some $\kappa > 0$, we have 
\begin{equation} \label{eq:controlled-superlinear}
\max_{i \leq n} \frac{f(i)}{i+1} \leq \kappa \frac{f(n)}{n+1}.
\end{equation}
Then, almost surely, $(\mathcal{T}_{i})_{i \in \mathbb{N}_{0}}$ contains a unique persistent hub. 
\end{thm}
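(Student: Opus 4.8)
The plan is to embed the dynamics~\eqref{eq:discrete-dynamics} into a continuous-time branching process in which distinct nodes evolve independently, and then to identify the persistent hub as the node whose out-degree grows fastest, via a comparison of hitting times.

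First I would set up the embedding (as in~\cite{rudas}, and as a special case of the framework of Section~\ref{sec:cmj-results} with $X_{j}\sim\Exp{f(j-1)}$): attach to each node $v$ an independent family $(E_{v,d})_{d\geq 0}$ with $E_{v,d}\sim\Exp{f(d)}$, declare that once $v$ is born it acquires its $(d+1)$th child after an additional time $E_{v,d}$, and write $\beta_{v}$ for the birth time of $v$ and $\sigma_{v}^{(m)}:=\beta_{v}+\sum_{d=0}^{m-1}E_{v,d}$ for the time at which $v$ reaches out-degree $m$. Then the tree observed at successive jump times has the law of $(\mathcal{T}_{n})_{n\in\mathbb{N}_{0}}$, and $\beta_{v}$ is independent of $(E_{v,d})_{d\geq 0}$. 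Let $a_{m}$ be the (a.s.\ unique) node that first reaches out-degree $m$, so $a_{m}=\arg\min_{v}\sigma_{v}^{(m)}$; a short argument shows that $(\mathcal{T}_{n})_{n}$ contains a persistent hub if and only if $(a_{m})_{m\in\mathbb{N}}$ is eventually constant, in which case this persistent hub is moreover unique (so that non-uniqueness cannot occur). Thus the theorem reduces to proving $\Prob{(a_{m})_{m}\ \text{eventually constant}}=1$.

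Next I would extract the two consequences of~\eqref{eq:controlled-superlinear} that drive the proof. Putting $i=0$ in~\eqref{eq:controlled-superlinear} gives $f(n)\geq (f(0)/\kappa)(n+1)$, hence $\sum_{d}1/f(d)^{2}<\infty$; since $\sum_{d}\Var(E_{v,d})=\sum_{d}1/f(d)^{2}<\infty$, Kolmogorov's theorem (Lemma~\ref{lem:kolmogorov}) gives that $\xi_{v}^{(m)}:=\sum_{d=0}^{m-1}\big(E_{v,d}-1/f(d)\big)$ converges a.s.\ to a finite limit $\xi_{v}^{(\infty)}$, so that, with the deterministic quantity $T^{(m)}:=\sum_{d=0}^{m-1}1/f(d)$, one has $\sigma_{v}^{(m)}-T^{(m)}\to\Psi_{v}:=\beta_{v}+\xi_{v}^{(\infty)}$ a.s.\ as $m\to\infty$. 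Consequently $\arg\min_{v}\sigma_{v}^{(m)}$ depends, for large $m$, only on the ordering of the $\Psi_{v}$; and for any fixed $u\neq v$ one has $\Psi_{v}-\Psi_{u}\neq 0$ a.s.\ (being the sum of an atomless variable and an independent one). The second consequence is quantitative control of the normalisation $\mathcal{Z}_{k}$ in~\eqref{eq:discrete-dynamics}: writing $d^{*}_{k}$ for the current maximal out-degree, $(f(0)/\kappa)(2k+1)\leq\mathcal{Z}_{k}\leq\kappa\,\tfrac{f(d^{*}_{k})}{d^{*}_{k}+1}(2k+1)$, so that the current leader is re-selected at the next step with probability at least $(d^{*}_{k}+1)/(\kappa(2k+1))$ --- i.e.\ the reinforcement at the top of the tree behaves, up to constants, as in the linear case (cf.\ Example~\ref{ex:bounded-linear}), which is what compensates for the possible absence of a Malthusian parameter.

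The core step is then to fix the node $v^{*}$ realising $\min_{v}\Psi_{v}$ --- showing this minimum is a.s.\ attained at a unique node is itself one of the two main points --- and to bound, for large $m_{0}$, $\Prob{\exists\,v\ \exists\,m\geq m_{0}:\ \sigma_{v}^{(m)}<\sigma_{v^{*}}^{(m)}}$ by splitting the union over $v$ according to whether $v$ is born before or after a threshold, and, on the ``late'' part, using that such a node must climb from out-degree $0$ to out-degree $m$ faster than $v^{*}$ advances by a single level; the probability of this can be estimated from the exponential tails of the $E_{v,d}$ together with the linear lower bound on $f$ and the selection-probability bound above. Summability of these estimates over $v$ and $m$ then lets Borel--Cantelli give that $(a_{m})_{m}$ is eventually constant, whence $v^{*}$ is the unique persistent hub. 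The step I expect to be the main obstacle is the regime $\sum_{d}1/f(d)<\infty$, in which individual nodes reach infinite out-degree in finite time and the population may explode at $\tau_{\infty}:=\lim_{k}\tau_{k}<\infty$: here $\beta_{v}=\tau_{v}$ no longer tends to infinity, $\Psi_{v}$ does not tend to infinity, and one cannot crudely argue that $\{v:\Psi_{v}\leq C\}$ is finite --- indeed infinitely many late-born nodes have $\sigma_{v}^{(\infty)}$ arbitrarily close to $\tau_{\infty}$. One must instead use the quantitative form of~\eqref{eq:controlled-superlinear} (which forces the maximal out-degree to diverge whenever the population explodes, and controls its growth rate relative to $k$) to show that each such late node stays strictly ahead of $v^{*}$ only up to a bounded out-degree level, so that $v^{*}$ --- the first node to reach infinite out-degree, equivalently the unique node of infinite degree in $\bigcup_{n}\mathcal{T}_{n}$ --- is nonetheless eventually the strict leader at every level. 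By contrast the non-explosive case is comparatively soft: there $\beta_{v}=\tau_{v}\to\infty$, the $\xi_{v}^{(\infty)}$ are i.i.d.\ with light lower tails and independent of $\beta_{v}$, and a direct Borel--Cantelli bound shows $\{v:\Psi_{v}\leq C\}$ is a.s.\ finite for every $C$, so $\min_{v}\Psi_{v}$ is attained and the comparison argument applies.
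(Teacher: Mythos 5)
Your plan shares the paper's two essential ingredients: the exponential embedding $X_j\sim\Exp{f(j-1)}$, and the observation that~\eqref{eq:controlled-superlinear} plus the handshake lemma force $\mathcal{Z}_k\leq 2k\kappa f(\mathcal{M}_k)/(\mathcal{M}_k+1)$, so the current leader is reinforced with probability at least $(\mathcal{M}_k+1)/(2k\kappa)$. But the core step is where the proof actually lives, and as written it has a genuine gap. You propose to fix $v^{*}$ attaining $\min_{v}\Psi_{v}$ and union-bound over all other nodes and levels, and you yourself flag that in the explosive regime (which is the generic case here: $f(n)=(n+1)^{p}$, $p>1$, satisfies~\eqref{eq:controlled-superlinear} and gives $\sum_d 1/f(d)<\infty$) the set $\{v:\Psi_{v}\leq C\}$ need not be finite, infinitely many late-born nodes have $\sigma_{v}^{(\infty)}$ accumulating at $\tau_{\infty}$, and it is not even clear that $\min_v\Psi_v$ is attained. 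The sentence ``one must use the quantitative form of~\eqref{eq:controlled-superlinear} to show that each such late node stays strictly ahead of $v^{*}$ only up to a bounded level'' is a restatement of the theorem at that point, not an argument; and the comparison you suggest (``climb from out-degree $0$ to $m$ faster than $v^{*}$ advances by a single level'') is not the right one in the explosive regime, where $v^{*}$ advances a level in vanishing time. What is missing is a \emph{uniform, quantitative} lower bound on how large the maximal degree already is when the $n$th node is born, without which the per-node overtake probabilities are not summable over the (infinitely many) competitors.

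The paper fills exactly this hole by a different organization of the same ingredients. First, the reinforcement bound you derived is upgraded, via the supermartingale $\mathcal{S}_{n}=\mathcal{M}_{n}^{-1}\prod_{i<n}(1-\tfrac{1}{2i\kappa})^{-1}$ and Doob's inequality (Lemma~\ref{lem:max-growth}, following~\cite[Proposition~12]{galashin}), to the statement that with probability $\geq 1-\eps$ one has $\mathcal{M}_{n}\geq n^{1/(2\kappa)}/r$ simultaneously for all $n$. Then, instead of comparing every node to a fixed $v^{*}$, one conditions on $\mathcal{F}_{\tau_{n}}$ and bounds the probability that the $n$th-\emph{born} node ever catches up to the \emph{current} leader $m_{n}$: Lemma~\ref{lem:overtake-bound} with the optimized tilt $\lambda_{n}\asymp\big(\sum_{i\geq \mathcal{M}_{n}}f(i)^{-2}\big)^{-1/2}\gtrsim n^{1/(4\kappa)}$ gives a bound $e^{2}\exp\big(-c\,n^{1/(4\kappa)}\big)$ on the event $\{\mathcal{M}_{n}\geq n^{1/(2\kappa)}/r\}$, which is summable in $n$ irrespective of explosion. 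Borel--Cantelli then shows only finitely many nodes ever attain the maximal degree, and persistence and uniqueness follow from the finite-leadership argument (Equation~\eqref{eq:finite-leadership}) and Claim~\ref{clm:inf-many-equal} --- note that uniqueness is not automatic from eventual constancy of your sequence $(a_{m})$, since ties at the maximum for cofinitely many discrete times must be ruled out separately, which is what the atomlessness argument in~\eqref{eq:borel-2} does. If you want to salvage your $v^{*}$-centric formulation, you would still need Lemma~\ref{lem:max-growth} (or an equivalent) as the key intermediate result; I would recommend making that the explicit first step rather than an implicit consequence of the ``selection-probability bound.''
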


\begin{rmq}
Unlike the other results in this section, Theorem~\ref{thm:superlinear} does not have an analogue for more general trees associated with CMJ branching processes, rather relies more heavily on the dynamics of the generalised preferential attachment tree. Part of the proof of Theorem~\ref{thm:superlinear}, notably Claim~\ref{clm:super-martingale}, uses a similar martingale argument to~\cite[Proposition~12]{galashin} to show that the maximal degree in the process $(\mathcal{T}_{i})_{i \in \mathbb{N}}$ grows sufficiently quickly. {\small\ensymboldremark }
\end{rmq}

\begin{rmq}
    Note that~\eqref{eq:controlled-superlinear} implies $f(n) \geq \frac{f(0)}{\kappa} (n+1)$ and thus $\sum_{n=0}^{\infty}1/f(n)^2 < \infty$. Therefore it is not necessary to impose this summability as an extra condition, unlike Item~\ref{item:det-persistent} of Corollary~\ref{cor:pers-det-trees}. {\small\ensymboldremark }
\end{rmq}

\begin{exs} \label{exs:imp-2}
Some example cases where~\eqref{eq:controlled-superlinear} is satisfied are the following:
\begin{enumerate}
    \item Any function $f$ such that $f(n)/(n+1)$ is non-decreasing in $n \in \mathbb{N}$. This includes many `barely faster-than-linear' examples of $f$, such as $f(n) = (n+1) \log(\log(n+3))$, for $n \in \mathbb{N}_{0}$. 
    \item Any function $f$ such that $f(x)$ is convex and unbounded (when extended to a function $\mathbb{R} \rightarrow \mathbb{R}$). Indeed, convexity implies that the function $f(x) - f(0)$ is super-additive on the positive reals. Thus, by unboundedness, and Fekete's lemma, 
    \begin{equation} \label{eq:fekete}
     \lim_{n \to \infty} \frac{f(n)}{n+1} = \sup_{\ell \in \mathbb{N}_{0}} \frac{f(\ell)}{\ell +1} > 0,   
    \end{equation}
    which implies~\eqref{eq:controlled-superlinear}.
    \item One may construct many examples where~\eqref{eq:controlled-superlinear} where $f(n)$ is not monotone, let alone convex. Hence~\eqref{eq:controlled-superlinear} is a strictly weaker condition than the condition of convexity and unboundedness required in~\cite{galashin}. 
    For example, if 
    \[
    f(n) = \begin{cases}
        (n+1)^2 & \text{if $n = 1$ or $n$ is even} \\
        n^{2} - 1 & \text{otherwise,}
    \end{cases} \]
    its easy to check that one can set $\kappa = 2$ in~\eqref{eq:controlled-superlinear}. {\small\ensymboldremark }
\end{enumerate}
\end{exs}

\section{Proofs of results} \label{sec:proofs}
\subsection{Some auxiliary lemmata}
\label{sec:auxiliary-lemmas}
In this section we collect some lemmas that will be useful in the sequel. The proofs of the first three are omitted, but we include the proof of the fourth lemma. 
\begin{lem}[Kolmogorov three series theorem, e.g.{~\cite[Theorem~2.5.8, page~73]{durrett}}] \label{lem:kolmogorov}
 For a sequence of mutually independent random variables $(S_{j})_{j \in \mathbb{N}}$, let $C > 0$ be given. Then the series $\sum_{j=1}^{\infty} S_{j}$ converges almost surely if and only if
 \begin{equation} \label{eq:kolmogorov-three-series}
     \sum_{j=1}^{\infty} \Prob{\left|S_{j}\right| > C} < \infty, \quad \sum_{j=1}^{\infty} \E{S_{j} \mathbf{1}_{\left|S_{j}\right| \leq C}} < \infty \quad \text{and} \quad \sum_{j=1}^{\infty} \var{S_{j} \mathbf{1}_{\left|S_{j}\right| \leq C}} < \infty. 
 \end{equation}
\hfill $\blacksquare$
\end{lem}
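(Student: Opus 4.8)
The plan is to prove the equivalence by the classical truncation-and-symmetrisation route, relying on three standard ingredients: Kolmogorov's maximal inequality (and the resulting one-series convergence theorem), its ``reverse'' version for uniformly bounded summands, and the two Borel--Cantelli lemmas. Throughout I would abbreviate $Y_{j} := S_{j}\mathbf{1}_{|S_{j}| \leq C}$; the $Y_{j}$ are independent, bounded by $C$ in absolute value, and $\var{Y_{j}}$ is exactly the summand appearing in the third series of~\eqref{eq:kolmogorov-three-series}.

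\emph{Sufficiency.} Assuming the three series in~\eqref{eq:kolmogorov-three-series} converge, I would first use $\sum_{j}\var{Y_{j}} < \infty$ together with Kolmogorov's maximal inequality to conclude that the partial sums of $Y_{j} - \E{Y_{j}}$ are $L^{2}$-bounded and converge almost surely; adding the convergent deterministic series $\sum_{j}\E{Y_{j}}$ gives that $\sum_{j}Y_{j}$ converges almost surely. Then $\sum_{j}\Prob{|S_{j}| > C} < \infty$ and the first Borel--Cantelli lemma give that, almost surely, $S_{j} = Y_{j}$ for all but finitely many $j$, so $\sum_{j}S_{j}$ differs from $\sum_{j}Y_{j}$ by an almost surely finite number of terms, and hence converges almost surely.

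\emph{Necessity.} Assuming $\sum_{j}S_{j}$ converges almost surely, I would argue as follows. Since $S_{j}\to 0$ almost surely, the independent events $\{|S_{j}| > C\}$ happen only finitely often almost surely, so by the second Borel--Cantelli lemma $\sum_{j}\Prob{|S_{j}| > C} < \infty$ --- the first series --- and in particular $\sum_{j}Y_{j}$ converges almost surely as well. To obtain the third series I would symmetrise: take an independent copy $(Y_{j}')$ of $(Y_{j})$ and set $Z_{j} := Y_{j} - Y_{j}'$, so the $Z_{j}$ are independent, symmetric (hence centred), bounded by $2C$, and $\sum_{j}Z_{j}$ converges almost surely. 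The reverse Kolmogorov inequality, $\Prob{\max_{k \leq n}\bigl|\sum_{j \leq k}Z_{j}\bigr| \leq \lambda} \leq (\lambda + 2C)^{2}\big/\sum_{j \leq n}\var{Z_{j}}$, then shows that $\sum_{j}\var{Z_{j}} = \infty$ would force the partial sums of $\sum_{j}Z_{j}$ to be almost surely unbounded, a contradiction; hence $\sum_{j}\var{Z_{j}} = 2\sum_{j}\var{Y_{j}} < \infty$, which is the third series. Given $\sum_{j}\var{Y_{j}} < \infty$, the sufficiency argument shows $\sum_{j}(Y_{j} - \E{Y_{j}})$ converges almost surely, and subtracting this from the already-established convergence of $\sum_{j}Y_{j}$ yields convergence of $\sum_{j}\E{Y_{j}}$ --- the second series. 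Since the hypotheses were taken for a single fixed $C$ and the argument just given shows they are equivalent to almost sure convergence (which does not mention $C$), there is no separate ``all $C$ versus some $C$'' point to address.

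The routine half is sufficiency, which is essentially Kolmogorov's one-series theorem plus a Borel--Cantelli argument. The main obstacle is the necessity of the second and third series. One cannot work directly with $(Y_{j})$ because the means $\E{Y_{j}}$ need not vanish, so the symmetrisation step is essential to centre the variables; and the reverse maximal inequality --- available precisely because truncation makes the summands bounded --- is the tool that turns almost sure convergence back into summability of the variances. Recovering the mean series from the variance series is then immediate.
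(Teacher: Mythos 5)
Your argument is correct and is precisely the classical proof of the three-series theorem (truncation, Kolmogorov's one-series theorem plus Borel--Cantelli for sufficiency; second Borel--Cantelli, symmetrisation and the reverse maximal inequality for necessity), which is the proof given in the reference the paper cites. The paper itself omits the proof of this lemma entirely, deferring to \cite[Theorem~2.5.8]{durrett}, so there is nothing in the paper to compare against beyond noting that your route matches the standard one.
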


Next, we state without proof some results from~\cite{leadership} related to series of independent random variables.

\begin{lem}[{\cite[Theorem~1.13]{leadership}}] \label{lem:no-atom}
     Suppose that $(S_{j})_{j \in \mathbb{N}}$ is a sequence of mutually independent random variables such that $\sum_{j=1}^{\infty} S_{j}$ converges almost surely. Then the distribution of $\sum_{j=1}^{\infty} S_{j}$ contains an atom on $\mathbb{R}$ if and only if, for some collection $(c_{j})_{j \in \mathbb{N}} \in \mathbb{R}^{\mathbb{N}}$
        \begin{equation} \label{eq:atomic-sum}
            \forall j \in \mathbb{N} \quad \Prob{S_{j} = c_{j}} > 0 \quad \text{ and } \quad \sum_{j=1}^{\infty} \Prob{S_{j} \neq c_{j}} < \infty. 
        \end{equation}
\end{lem}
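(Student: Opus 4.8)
The plan is to prove both implications, disposing of the elementary ``if'' direction first and then the harder ``only if'' direction, whose essential ingredient is a quantitative anti-concentration inequality. For the ``if'' direction, suppose such a collection $(c_j)$ exists. Since $\sum_j \Prob{S_j \neq c_j} < \infty$ and each factor $\Prob{S_j = c_j}\in(0,1]$, the infinite product $\prod_j \Prob{S_j = c_j} = \prod_j\bigl(1 - \Prob{S_j \neq c_j}\bigr)$ converges to a strictly positive value. By mutual independence the event $E := \{S_j = c_j \text{ for all } j\}$ has $\Prob{E} = \prod_j \Prob{S_j = c_j} > 0$. Because $\sum_j S_j$ converges almost surely, on the positive-probability event $E$ the series equals $\sum_j c_j$, which is therefore a finite constant $c$; hence $\Prob{\sum_j S_j = c} \geq \Prob{E} > 0$, and the limit has an atom at $c$.

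For the ``only if'' direction, suppose $S := \sum_j S_j$ has an atom, say $\Prob{S = a} = p > 0$. First I would record the elementary convolution fact that if $X+Y$ has an atom and $X,Y$ are independent then each of $X,Y$ has an atom: writing $\Prob{X+Y=a} = \int \Prob{Y = a-x}\,d\mu_X(x)$, positivity forces $\mu_X$ to charge the countable set $\{x : \Prob{Y = a-x} > 0\}$. Applying this with $X = S_j$ and $Y = \sum_{i \neq j} S_i$ (which converges a.s.\ and is independent of $S_j$) shows that every $S_j$ has an atom; let $c_j$ be a mode, so $p_j := \Prob{S_j = c_j} = Q(S_j) > 0$, where $Q(X) := \sup_x \Prob{X = x}$ is the largest atom. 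Since for the modal choice one has $\Prob{S_j \neq c_j} = 1 - p_j$, and any other choice only increases each term, the whole statement reduces to proving the summability $\sum_j (1 - p_j) < \infty$ for the modes.

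To obtain this, I would decompose, for each $n$, $S = R_n + T_n$ with $R_n := \sum_{k \leq n} S_k$ and $T_n := \sum_{k > n} S_k$ independent, and condition on $T_n$ to get $p = \Prob{R_n + T_n = a} = \int \Prob{R_n = a - t}\,d\mu_{T_n}(t) \leq Q(R_n)$, so $Q(R_n) \geq p$ for every $n$. The crucial input is then the Kolmogorov--Rogozin anti-concentration inequality: there is an absolute constant $C$ such that, for independent summands and every $\lambda > 0$, $Q\bigl(R_n;\lambda\bigr) \leq C\bigl(\sum_{k=1}^n (1 - Q(S_k;\lambda))\bigr)^{-1/2}$, where $Q(\,\cdot\,;\lambda)$ is the L\'evy concentration function over intervals of length $\lambda$. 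Letting $\lambda \downarrow 0$ (so that $Q(\,\cdot\,;\lambda)$ decreases to the largest-atom functional $Q(\,\cdot\,)$, a standard right-continuity fact) turns this into $p \leq Q(R_n) \leq C\bigl(\sum_{k=1}^n (1 - p_k)\bigr)^{-1/2}$, whence $\sum_{k=1}^n (1 - p_k) \leq C^2/p^2$ uniformly in $n$, giving $\sum_j \Prob{S_j \neq c_j} = \sum_j (1 - p_j) \leq C^2/p^2 < \infty$, as required.

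The main obstacle is precisely this necessity direction: the reduction to $\sum_j (1 - p_j) < \infty$ is purely combinatorial, but controlling the largest atom of the partial sums $R_n$ uniformly in $n$ genuinely requires a quantitative anti-concentration bound, and the cleanest route is to invoke Rogozin's inequality and pass to the atom limit $\lambda \downarrow 0$. Should one prefer to avoid citing Rogozin, an alternative is to symmetrize, replacing $S_j$ by $S_j - S_j'$ (an independent copy), which relocates the atom to $0$, places the largest atom of each symmetrized term at $0$, and preserves the summability via the elementary bounds $1 - p_j \leq \Prob{S_j - S_j' \neq 0} \leq 2(1 - p_j)$; however, this reformulation does not remove the need for an anti-concentration estimate, it only rephrases it.
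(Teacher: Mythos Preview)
The paper does not supply its own proof of this lemma; it is quoted verbatim from \cite{leadership} and explicitly listed among the ``results from~\cite{leadership}'' that are ``state[d] without proof''. So there is nothing to compare against, and your proposal stands on its own.

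Your argument is correct in both directions. The ``if'' direction is routine. For the ``only if'' direction, your two key steps---(i) the convolution observation that an atom in $S$ forces an atom in every $S_j$, and (ii) the uniform lower bound $Q(R_n)\ge p$ combined with the Kolmogorov--Rogozin inequality in the limit $\lambda\downarrow 0$---are the standard way this result is established, and the passage to the limit is justified by the decomposition of $\mu$ into its atomic and diffuse parts (the diffuse part contributes $o(1)$ to $Q(\,\cdot\,;\lambda)$ by uniform continuity of its distribution function, and the atomic part is handled by separating the finitely many large atoms). One small cosmetic point: when you write ``$\mu_X$ must charge the countable set $\{x:\Prob{Y=a-x}>0\}$'', you are implicitly using that $Y$ has at most countably many atoms; this is of course true, but it is worth saying, since it is what makes the set countable. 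The closing remark about symmetrisation is accurate: it tidies the bookkeeping but does not eliminate the need for an anti-concentration input.
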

\hfill $\blacksquare$

\begin{lem}[{\cite[Proposition~1.15]{leadership}}] \label{lem:fluctuating-sum}
    Suppose~$(S_{j})_{j \in \mathbb{N}}$ is a sequence of mutually independent, symmetric random variables. If $\sum_{j=1}^{\infty} S_{j}$ diverges almost surely, then 
        \begin{equation} \label{eq:varying-sum}
        \limsup_{n \to \infty} \sum_{j=1}^{n} S_j = \infty \quad \text{and} \quad \liminf_{n \to \infty} \sum_{j=1}^{n} S_{j} = -\infty \quad \text{almost surely.}
        \end{equation}
\end{lem}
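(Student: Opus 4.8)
The plan is to combine a symmetry/zero--one--law dichotomy with the classical fact that, for independent symmetric summands, bounded partial sums force almost sure convergence. Throughout write $T_{n} := \sum_{j=1}^{n} S_{j}$. Since each $S_{j}$ is symmetric and the $S_{j}$ are independent, the whole sequence of partial sums satisfies $(T_{n})_{n} \stackrel{(\mathrm{law})}{=} (-T_{n})_{n}$; consequently $\limsup_{n} T_{n}$ and $-\liminf_{n} T_{n}$ have the same law, and it suffices to prove that $\limsup_{n} T_{n} = +\infty$ almost surely, the statement for the $\liminf$ then following by applying this to $(-S_{j})_{j}$. First I would observe that for every fixed $m$ one has $\limsup_{n} T_{n} = T_{m} + \limsup_{n}(T_{n} - T_{m})$, so the event $\{\limsup_{n} T_{n} = +\infty\} = \{\limsup_{n} \sum_{j=m+1}^{n} S_{j} = +\infty\}$ is measurable with respect to $\sigma(S_{m+1}, S_{m+2}, \dots)$ for every $m$, hence lies in the tail $\sigma$-algebra. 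By Kolmogorov's zero--one law its probability is $0$ or $1$, and by the symmetry above $\Prob{\limsup_{n} T_{n} = +\infty} = \Prob{\liminf_{n} T_{n} = -\infty}$.

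It therefore remains to exclude the possibility that both probabilities equal $0$. If they did, then almost surely $\limsup_{n} T_{n} < +\infty$ and $\liminf_{n} T_{n} > -\infty$, so $\sup_{n} |T_{n}| < \infty$ almost surely; in particular the laws of $(T_{n})_{n}$ form a tight family. The heart of the argument is then the assertion that, for independent symmetric summands, tightness of the partial sums forces $\sum_{j} S_{j}$ to converge almost surely --- directly contradicting the standing hypothesis that it diverges almost surely. This implication is the main obstacle, since boundedness \emph{a priori} allows divergence by bounded oscillation, and ruling this out is exactly where independence and symmetry must both be used.

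To carry out this crux I would argue as follows. Writing $\psi_{j}(t) := \E{\cos(t S_{j})}$, which is real because $S_{j}$ is symmetric, the characteristic function of $T_{n}$ is the real product $\varphi_{n}(t) = \prod_{j=1}^{n} \psi_{j}(t)$. Tightness makes the family $(\varphi_{n})$ equicontinuous at $0$, so there is $\delta > 0$ with $\varphi_{n}(t) \ge \tfrac{1}{2}$ for all $n$ and all $|t| \le \delta$; hence on $[-\delta,\delta]$ each factor satisfies $\psi_{j}(t) \in (0,1]$, the products are nonincreasing in $n$ and bounded below, and the characteristic function $\varphi_{n}(t)/\varphi_{m}(t)$ of the increment $T_{n} - T_{m}$ tends to $1$ as $m,n \to \infty$. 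Since moreover $|T_{n} - T_{m}| \le |T_{n}| + |T_{m}|$ shows the increments $\{T_{n} - T_{m} : n > m\}$ are tight, and since any weak subsequential limit then has characteristic function identically $1$ on a neighbourhood of $0$ and so must be the point mass at $0$, I would conclude that $T_{n} - T_{m} \to 0$ in distribution, hence in probability. Finally Lévy's maximal inequality for symmetric sums, $\Prob{\max_{m < k \le n} |T_{k} - T_{m}| > \eps} \le 2\,\Prob{|T_{n} - T_{m}| > \eps}$, upgrades Cauchy-in-probability to almost sure Cauchyness, so $(T_{n})$ converges almost surely, the desired contradiction. I expect the delicate point to be precisely the passage from control of the characteristic functions near $0$ to convergence in probability of the increments --- via tightness of the increments and identification of their subsequential limits --- as this is where bounded oscillation is finally eliminated.
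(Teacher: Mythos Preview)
The paper does not prove this lemma: it is quoted verbatim from an external reference and the proof is explicitly omitted (``The proofs of the first three are omitted''). There is therefore nothing in the paper to compare your argument against.

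Your argument is correct and follows the classical route. The zero--one law reduces matters to excluding the case $\sup_n |T_n|<\infty$ almost surely; you then argue, via tightness and characteristic functions near the origin together with L\'evy's maximal inequality, that almost sure boundedness of independent symmetric partial sums forces almost sure convergence, contradicting the hypothesis. The only place I would tighten is the step from ``$\varphi_n(t)/\varphi_m(t)\to 1$ on $[-\delta,\delta]$'' to ``$T_n-T_m\to 0$ in probability'': spell out that for any sequence of pairs $(m_k,n_k)$ with $m_k\to\infty$ one extracts, by tightness of the increments, a weakly convergent subsequence, identifies its limit as $\delta_0$ from the characteristic function on $[-\delta,\delta]$, and then concludes via the sub-subsequence principle. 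With that made explicit the proof is complete.
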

\hfill $\blacksquare$

Finally, we state an prove an inequality which bounds the probability that one random series of independent random variables `overshoots' another by a certain amount.  

\begin{lem} \label{lem:overtake-bound}
    Suppose that $(X_{i})_{i \in \mathbb{N}}$, $(X'_{i})_{i \in \mathbb{N}}$ and $Y$ are mutually independent random variables, with $(X_{i})_{i \in \mathbb{N}} \sim (X'_{i})_{i \in \mathbb{N}}$. Suppose that, for $k \in \mathbb{N}_{0}$, there exists $\lambda > 0$ such that
    \begin{equation} \label{eq:finite-mgf-mart}
    \prod_{i=k}^{\infty}\E{e^{\lambda  (X'_{i} - X_{i})}} < \infty.  
    \end{equation}
    Then, for any $\ell \in \mathbb{N}$
    \begin{equation} \label{eq:maximal-bound}
        \Prob{\exists j \in \mathbb{N}\colon Y + \sum_{i=1}^{k + j} X_{i} \leq \sum_{i=k}^{k+j} X'_{i}} \leq \left(\prod_{i=k}^{\infty}\E{e^{\lambda  (X'_{i} - X_{i})}}\right) \E{e^{- \lambda \left(Y + \sum_{i=1}^{k-1} X_{i}\right)}}.
    \end{equation}
\end{lem}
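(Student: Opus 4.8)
The plan is to run a standard exponential supermartingale / union-bound argument. Observe first that the event $\{\exists j \in \mathbb{N}\colon Y + \sum_{i=1}^{k+j} X_{i} \leq \sum_{i=k}^{k+j} X'_{i}\}$ is contained in the union over $j \in \mathbb{N}$ of the events $A_{j} := \{Y + \sum_{i=1}^{k+j} X_{i} \leq \sum_{i=k}^{k+j} X'_{i}\}$, so by a union bound it suffices to estimate $\Prob{A_{j}}$ for each fixed $j$ and sum. Rearranging, $A_{j}$ is the event that $\sum_{i=k}^{k+j}(X'_{i} - X_{i}) \geq Y + \sum_{i=1}^{k-1} X_{i} =: W$, where $W$ is independent of $(X_{i})_{i \geq k}$ and $(X'_{i})_{i \geq k}$.

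Next I would apply the Chernoff/Markov bound with the parameter $\lambda > 0$ from~\eqref{eq:finite-mgf-mart}: for each fixed $j$,
\begin{equation}
\Prob{A_{j}} = \Prob{\sum_{i=k}^{k+j}(X'_{i} - X_{i}) \geq W} \leq \E{e^{\lambda\left(\sum_{i=k}^{k+j}(X'_{i}-X_{i}) - W\right)}} = \left(\prod_{i=k}^{k+j}\E{e^{\lambda(X'_{i}-X_{i})}}\right)\E{e^{-\lambda W}},
\end{equation}
using mutual independence of all the variables involved and the fact that $(X_{i}) \sim (X'_{i})$. Since all factors $\E{e^{\lambda(X'_{i}-X_{i})}}$ are at least $1$ by Jensen's inequality (the exponent has mean zero), each partial product $\prod_{i=k}^{k+j}\E{e^{\lambda(X'_{i}-X_{i})}}$ is bounded above by the full infinite product $\prod_{i=k}^{\infty}\E{e^{\lambda(X'_{i}-X_{i})}}$, which is finite by hypothesis.

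At this point the naive union bound gives $\sum_{j}\Prob{A_{j}}$ dominated by a \emph{divergent} geometric-type series unless one is more careful — this is the step I expect to be the main obstacle, and it is why the lemma as stated only claims a bound by $\left(\prod_{i=k}^{\infty}\E{e^{\lambda(X'_{i}-X_{i})}}\right)\E{e^{-\lambda(Y + \sum_{i=1}^{k-1}X_{i})}}$ rather than by a sum over $j$. The resolution is to avoid the union bound entirely and instead use a maximal inequality: the process $M_{j} := \exp\!\bigl(\lambda \sum_{i=k}^{k+j}(X'_{i}-X_{i})\bigr)\big/\prod_{i=k}^{k+j}\E{e^{\lambda(X'_{i}-X_{i})}}$ is a nonnegative martingale (with respect to the filtration generated by $(X_{i}, X'_{i})_{k \leq i \leq k+j}$) with $\E{M_{j}} = 1$ for all $j$, and since its normalizing denominators are bounded above by the finite constant $P := \prod_{i=k}^{\infty}\E{e^{\lambda(X'_{i}-X_{i})}}$, the event $A_{j}$ forces $M_{j} \geq e^{\lambda W}/P$. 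Conditioning on $W$ (i.e.\ on $Y$ and $X_{1},\dots,X_{k-1}$) and applying Doob's maximal inequality for nonnegative supermartingales, $\Prob{\exists j\colon M_{j} \geq e^{\lambda W}/P \,\middle|\, W} \leq P\, e^{-\lambda W}$; taking expectations over $W$ and using independence yields exactly~\eqref{eq:maximal-bound}. (The role of the free index $\ell \in \mathbb{N}$ in the statement appears to be vacuous in the bound itself, so I would not need it; if it is meant to restrict the range of $j$ the same argument applies verbatim to the truncated maximum.)
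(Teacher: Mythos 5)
Your proposal is correct and follows essentially the same route as the paper: after discarding the union bound, you apply Doob's maximal inequality to the exponential process $\exp\bigl(\lambda\sum_{i=k}^{k+j}(X'_i-X_i)\bigr)$, using that each factor $\E{e^{\lambda(X'_i-X_i)}}\geq 1$ so the partial products are dominated by the infinite one. The only cosmetic difference is that the paper folds $e^{-\lambda(Y+\sum_{i=1}^{k-1}X_i)}$ into the process at time zero and treats it as a submartingale crossing level $1$, whereas you normalize to a mean-one martingale and condition on $W$ to set the threshold; both give the identical bound, and your observation that the index $\ell$ in the statement is vacuous is also accurate.
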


\begin{proof}[Proof of Lemma~\ref{lem:overtake-bound}]
Suppose that $[n] := \left\{1, \ldots, n\right\}$. Then, for $n \in \mathbb{N}$, and $\lambda$ satisfying Equation~\eqref{eq:finite-mgf-mart} note that
\begin{linenomath*}
\begin{align*}
&\Prob{\exists j \in [n]\colon Y + \sum_{i=1}^{k+j} X_{i} \leq \sum_{i=k}^{k+j} X'_{i}} = \Prob{\exists j \in [n]\colon \lambda  \left( \sum_{i=k}^{k+j} (X'_{i} - X_{i}) \right) - \lambda \left(Y + \sum_{i=1}^{k-1} X_{i}\right) \geq 0}
\\ &\hspace{3cm} = \Prob{\exists j \in [n]\colon \exp{\left(\lambda  \left( \sum_{i=k}^{k+j} (X'_{i} - X_{i}) \right) - \lambda \left(Y + \sum_{i=1}^{k-1} X_{i}\right) \right)} \geq 1}.
\end{align*}
\end{linenomath*}
Now, define $(M_{j})_{j \in \mathbb{N}_{0}}$ by $M_{0} := \exp{\left(- \lambda \left(Y + \sum_{i=1}^{k-1} X_{i} \right)\right)}$, and \[M_{i+1} = M_{i} \exp{\left(\lambda (X'_{k+i} - X_{k+i}) \right)}.\] Then, $M_{n} = \exp{\left(\lambda  \left( \sum_{i=k}^{k+n-1} (X'_{i} - X_{i}) \right) - \lambda \left(Y + \sum_{i=1}^{k-1} X_{i} \right)\right)}$ and
\begin{linenomath}
\begin{align}
\E{M_{i+1} \, | \, M_{0}, \ldots, M_{i}} & = M_{i} \E{\exp{\left(\lambda (X'_{k+i+1} - X_{k+i+1}) \right)}} \\ & \geq M_{i} \exp{\left(\lambda \E{(X'_{k+i+1} - X_{k+i+1})} \right)} =  M_{i}, 
\end{align}
\end{linenomath}
where we have applied Jensen's inequality. Combining this with Equation~\eqref{eq:finite-mgf-mart}, we deduce that the sequence $(M_{j})_{j \in \mathbb{N}_0}$ is a sub-martingale sequence. By Doob's sub-martingale inequality, 
\begin{equation} \label{eq:doob-usage}
    \Prob{\exists j \in [n]\colon \exp{\left(\lambda  \left( \sum_{i=k}^{k+j} (X'_{i} - X_{i}) \right) - \lambda \left(Y + \sum_{i=1}^{k-1} X_{i}\right)\right)} \geq 1} = \Prob{\max_{0 \leq j \leq n+1} M_{j} \geq 1} \leq \E{M_{n+1}}. 
\end{equation}
Taking monotone limits as $n \to \infty$, we deduce~\eqref{eq:maximal-bound}. 
\end{proof}

\subsection{Proofs of Theorem~\ref{thm:persistence} and Theorem~\ref{thm:unique}} \label{sec:cmj-proofs}

For the proof of Theorem~\ref{thm:persistence} and Theorem~\ref{thm:unique}, we first define some  terminology, and state and prove Lemmas~\ref{lem:non-expl} and~\ref{lem:finite-persistence-claims}. We then proceed with the proof of Theorem~\ref{thm:persistence}: the proof of Item~\ref{item:persistent-hub} in Section~\ref{sec:persistent-hub}, and the proof of Item~\ref{item:non-persistence} in Section~\ref{sec:non-persistence}. We prove Theorem~\ref{thm:unique} in Section~\ref{sec:unique}. 

First, we show that the first part of Equation~\eqref{eq:persistent} implies that the process $(\mathscr{T}_{t})_{t \geq 0}$ is almost surely \emph{non-explosive}, that is, almost surely, for all $t > 0$, $|\mathscr{T}_{t}| < \infty$. We note that the following lemma does not require Item~\ref{item:finiteness} of Assumption~\ref{ass:gen-ass}. 

\begin{lem} \label{lem:non-expl}
Assume that Items~\ref{item:mutual-independence} and~\ref{item:finiteness} of Assumption~\ref{ass:gen-ass} are satisfied, and for $\alpha > 0$, we have 
\begin{equation} \label{eq:finite-laplace}
\sum_{j=1}^{\infty} \E{e^{-\alpha \sum_{i=1}^{j} X_{i}}} < 1.
\end{equation} 
Then, for any $t >0$, $|\mathscr{T}_{t}| < \infty$. 
\end{lem}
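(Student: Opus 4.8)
The plan is to estimate the first moment $\E{|\mathscr{T}_{t}|}$ and show it is finite for every fixed $t > 0$; non-explosiveness then follows immediately. Fix $t > 0$ and $\alpha > 0$ satisfying~\eqref{eq:finite-laplace}, and set $\rho := \sum_{j=1}^{\infty} \E{e^{-\alpha \sum_{i=1}^{j} X_{i}}}$, so that $\rho < 1$ by hypothesis. Since $|\mathscr{T}_{t}| = \sum_{u \in \mathcal{U}} \mathbf{1}(\mathcal{B}(u) \leq t)$ is a countable sum of non-negative terms, Tonelli's theorem gives $\E{|\mathscr{T}_{t}|} = \sum_{u \in \mathcal{U}} \Prob{\mathcal{B}(u) \leq t}$, and the elementary pointwise inequality $\mathbf{1}(\mathcal{B}(u) \leq t) \leq e^{\alpha t} e^{-\alpha \mathcal{B}(u)}$ (an exponential Markov bound, valid even with the convention $e^{-\alpha \cdot \infty} := 0$) yields $\Prob{\mathcal{B}(u) \leq t} \leq e^{\alpha t}\, \E{e^{-\alpha \mathcal{B}(u)}}$ for every $u \in \mathcal{U}$.

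First I would evaluate $\E{e^{-\alpha \mathcal{B}(u)}}$ for a fixed $u = u_{1}\cdots u_{k}$ by decomposing the birth time along the ancestral path: $\mathcal{B}(u) = \sum_{\ell=1}^{k}\bigl(\mathcal{B}(u_{|_{\ell}}) - \mathcal{B}(u_{|_{\ell-1}})\bigr)$, where the $\ell$th increment equals $\sum_{j=1}^{u_{\ell}} X(u_{|_{\ell-1}} j)$. Since the ancestors $u_{|_{0}}, \dots, u_{|_{k-1}}$ are pairwise distinct, these increments use disjoint blocks of the i.i.d.\ family $\{(X(vj))_{j \in \mathbb{N}}\}_{v \in \mathcal{U}}$, so by Item~\ref{item:mutual-independence} of Assumption~\ref{ass:gen-ass} they are mutually independent, the $\ell$th one having the law of $\sum_{j=1}^{u_{\ell}} X_{j}$. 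Hence $\E{e^{-\alpha \mathcal{B}(u)}} = \prod_{\ell=1}^{k} \E{e^{-\alpha \sum_{j=1}^{u_{\ell}} X_{j}}}$, the empty product being $1$ when $u = \varnothing$.

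Then I would sum over all $u \in \mathcal{U}$, grouped by generation $k \geq 0$ and by child-indices $(u_{1}, \dots, u_{k}) \in \mathbb{N}^{k}$; the product structure (and Tonelli again) factorises the sum into $\sum_{k=0}^{\infty}\bigl(\sum_{m=1}^{\infty} \E{e^{-\alpha \sum_{j=1}^{m} X_{j}}}\bigr)^{k} = \sum_{k=0}^{\infty} \rho^{k} = (1-\rho)^{-1}$. Combining the displays gives $\E{|\mathscr{T}_{t}|} \leq e^{\alpha t}/(1-\rho) < \infty$, hence $|\mathscr{T}_{t}| < \infty$ almost surely; applying this along a sequence $t_{n} \uparrow \infty$ and using $\mathscr{T}_{s} \subseteq \mathscr{T}_{t}$ for $s \leq t$ then gives $|\mathscr{T}_{t}| < \infty$ for all $t > 0$ simultaneously, almost surely. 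The only step requiring genuine care is the independence of the path increments and the resulting factorisation of the sum over the Ulam--Harris tree $\mathcal{U}$ into a geometric series; everything else is routine, and one notes that the argument uses only Item~\ref{item:mutual-independence}, the bound $\E{e^{-\alpha \sum_{j=1}^{m} X_{j}}} \leq 1$ holding irrespective of Item~\ref{item:finiteness}.
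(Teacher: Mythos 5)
Your proposal is correct and follows essentially the same route as the paper: both bound $\E{|\mathscr{T}_{t}|}$ by factorising $\E{e^{-\alpha\mathcal{B}(u)}}$ along the ancestral path and summing the resulting geometric series $\sum_{k}\rho^{k}$ with $\rho<1$. The only (cosmetic) difference is that you use the Chernoff bound $\mathbf{1}(\mathcal{B}(u)\leq t)\leq e^{\alpha t}e^{-\alpha\mathcal{B}(u)}$ at a fixed time $t$ and then let $t_{n}\uparrow\infty$, whereas the paper evaluates the same Laplace transform as $\Prob{\mathcal{B}(u)\leq Y_{\alpha}}$ for an independent $\Exp{\alpha}$ time $Y_{\alpha}$ and concludes via $\Prob{Y_{\alpha}>t}>0$.
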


\begin{proof}
For $\alpha > 0$ satisfying $\sum_{j=1}^{\infty} \E{e^{-\alpha \sum_{i=1}^{j} X_{i}}} < 1$, let $Y_{\alpha} \sim \Exp{\alpha}$ be an exponentially distributed random variable, independent of the process $(\mathscr{T}_{t})_{t \geq 0}$. 
Therefore, using the fact that $Y_{\alpha}$ is exponentially distributed, and the independence of the associated random variables, for $v = v_1 \cdots v_{m} \in \mathcal{U}$, we have
\begin{linenomath*}
\begin{align}
\Prob{\mathcal{B}(v) \leq Y_{\alpha}} & = \Prob{\sum_{\ell = 1}
^{m} \sum_{i=1}^{v_{\ell}}  X(v_1 \cdots v_{\ell-1} i) \leq Y_{\alpha}} \\ & = \E{e^{-\alpha \sum_{\ell = 1}
^{m} \sum_{i=1}^{v_{\ell}}  X(v_1 \cdots v_{\ell-1} i)}} = \prod_{\ell = 1}^{m} \E{ e^{-\alpha \sum_{k=1}^{v_{\ell}} X_{k}}}.
\end{align}
\end{linenomath*}
Therefore, 
\begin{linenomath*}
    \begin{align}
        \E{|\mathscr{T}_{Y_{\alpha}}|} = \sum_{v \in \mathcal{U}} \Prob{\mathcal{B}(v) \leq Y_{\alpha}} & = \Prob{\mathcal{B}(\varnothing) \leq Y_{\alpha}} + \sum_{m=1}^{\infty} \sum_{v_{1} = 1}^{\infty} \cdots \sum_{v_{m} = 1}^{\infty} \prod_{\ell = 1}^{m} \E{ e^{-\alpha \sum_{k=1}^{v_{\ell}} X_{k}}} \\ & = 1 + \sum_{m=1}^{\infty} \left(\sum_{j=1}^{\infty} \E{e^{-\alpha \sum_{i=1}^{j} X_{i}}}\right)^{m} 
        \stackrel{\eqref{eq:finite-laplace}}{<} \infty.
    \end{align}
\end{linenomath*}
Thus, $|\mathscr{T}_{Y_{\alpha}}| < \infty$ almost surely. As $\Prob{Y_{\alpha} > t} > 0$, for any $t \geq 0$, this implies that, almost surely, for any $t > 0$, we have $|\mathscr{T}_{t}| < \infty$. 
\end{proof}

 Next, for $u \in \mathcal{U}$, $K \in \mathbb{N}$ we say 
\begin{equation} \label{eq:moderate-defn}
\text{$u$ is $K$-moderate if $u = u_1 \cdots u_m$ with each $u_{i} \leq K$.}
\end{equation}
Moreover, recall that, for a tree $\mathcal{T}$, if $u \notin \mathcal{T}$ we set $\outdeg{u, \mathcal{T}} = -\infty$. Then, with regards to the CMJ process $(\mathscr{T}_{t})_{t \geq 0}$ we define the following event: for $u, v \in \mathcal{U}$ set 
\begin{equation} \label{eq:win-def}
\win(u,v) := \left\{\exists N \in \mathbb{N}\colon \; \forall n \geq N \; \deg(u, \mathscr{T}_{\tau_{n}}) \geq \deg(v, \mathscr{T}_{\tau_{n}})\right\}.
\end{equation}

\begin{lem} \label{lem:finite-persistence-claims}
Assume Assumption~\ref{ass:gen-ass} is satisfied. Then, with regards to the process $(\mathscr{T}_{t})_{t \geq 0}$, we have the following claims:
\begin{enumerate}
    \item \label{item:non-expl-cor} We have $\lim_{n \to \infty} \max_{u \in \mathscr{T}_{\tau_{n}}} \outdeg{u, \mathscr{T}_{\tau_{n}}} = \infty$ almost surely. 
    \item \label{item:winner} If $\sum_{i=1}^{\infty} (X_{i} - X'_{i})$ converges almost surely, then for any $u, v \in \mathcal{U}$ we have 
    \begin{equation} \label{eq:win-one}
    \Prob{\win(u,v) \cup \win(v,u)} = 1.
    \end{equation}
    \item \label{item:no-winner} If $\sum_{i=1}^{\infty} (X_{i} - X'_{i})$ diverges almost surely, there exists an increasing function $\phi\colon \mathbb{N} \rightarrow \mathbb{N}$ such that, for any $u \in \mathcal{U}$, $j \in \mathbb{N}$ we have 
    \begin{equation} \label{eq:win-two}
    \Prob{\exists k \leq \phi(j)\colon \sum_{\ell = 1}^{k} X(uj\ell) < \sum_{\ell = j+1}^{k} X(u\ell)} \geq 1/2.
    \end{equation}
\end{enumerate}
\end{lem}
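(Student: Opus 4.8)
Throughout I use the identity $\outdeg{w, \mathscr{T}_{\tau_n}} = \bigl|\{i \in \mathbb{N} : \mathcal{B}(wi) \le \tau_n\}\bigr|$ together with $\mathcal{B}(wi) = \mathcal{B}(w) + \sum_{j=1}^{i} X(wj)$.

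\emph{Item~\ref{item:non-expl-cor}.} Since $n \mapsto \max_{u \in \mathscr{T}_{\tau_n}} \outdeg{u, \mathscr{T}_{\tau_n}}$ is non-decreasing it converges to some $M_\infty \in \mathbb{N} \cup \{\infty\}$, and it suffices to show $\Prob{M_\infty \le D} = 0$ for each fixed $D$. On $\{\tau_\infty = \infty\}$ the out-degree of $\varnothing$ in $\mathscr{T}_{\tau_n}$ equals $|\{i : \mathcal{B}(i) \le \tau_n\}| \to \infty$, because each $\mathcal{B}(i) < \infty$ a.s.\ by Assumption~\ref{ass:gen-ass}(\ref{item:finiteness}) and $\tau_n \to \infty$; hence $\{M_\infty \le D\} \subseteq \{\tau_\infty < \infty\}$ almost surely. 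On $\{M_\infty \le D, \tau_\infty < \infty\}$ the tree $\bigcup_n \mathscr{T}_{\tau_n}$ is infinite, locally finite, with all out-degrees $\le D$; using the self-similar identity $\tau_\infty = \inf_{i\ge1}(\mathcal{B}(i) + \tau_\infty^{(i)})$ with $(\tau_\infty^{(i)})$ independent copies of $\tau_\infty$, each strictly positive a.s.\ by Assumption~\ref{ass:gen-ass}(\ref{item:non-zero-explosion}) — so that the infimum is attained among the (finitely many, at least one) children of $\varnothing$ lying in the tree — one builds recursively a ray $\varnothing = v_0, v_1, \dots$ in which $v_{\ell+1}$ is the child of $v_\ell$ in $\bigcup_n \mathscr{T}_{\tau_n}$ whose subtree explodes soonest. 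This ray is $D$-moderate, and since $\tau_\infty = \mathcal{B}(v_\ell) + \tau_\infty^{(v_\ell)}$ with $(\tau_\infty^{(v_\ell)})_\ell$ non-increasing, its increments $\mathcal{B}(v_{\ell+1}) - \mathcal{B}(v_\ell) = \sum_{i=1}^{m_\ell} X(v_\ell i)$ tend to $0$. Now fix $\epsilon > 0$ with $\sum_{m=1}^D \prod_{i=1}^m \Prob{X_i < \epsilon} < 1$, which is possible since, as $\epsilon \downarrow 0$, this finite sum decreases to $\sum_{m=1}^D \prod_{i=1}^m \Prob{X_i = 0} \le \sum_{m=1}^\infty \prod_{i=1}^m \Prob{X_i = 0} < 1$ by Assumption~\ref{ass:gen-ass}(\ref{item:non-zero-explosion}). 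Using Assumption~\ref{ass:gen-ass}(\ref{item:mutual-independence}) and the inclusion $\{\sum_{i=1}^m X_i < \epsilon\} \subseteq \bigcap_{i=1}^m \{X_i < \epsilon\}$, the expected number of $D$-moderate paths of length $h$ issued from a fixed node, all of whose increments are $< \epsilon$, is at most $\bigl(\sum_{m=1}^D \prod_{i=1}^m \Prob{X_i < \epsilon}\bigr)^h \to 0$; by König's lemma and a countable union over starting nodes, a.s.\ no subtree of $\bigcup_n \mathscr{T}_{\tau_n}$ contains an infinite $D$-moderate ray with all increments $< \epsilon$. This contradicts the existence of the explosion ray (a tail of which is exactly such a ray), so $\Prob{M_\infty \le D, \tau_\infty < \infty} = 0$ and hence $\Prob{M_\infty \le D} = 0$. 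I expect the first-moment estimate — massaging the bound into the form controlled by Assumption~\ref{ass:gen-ass}(\ref{item:non-zero-explosion}) — to be the main obstacle.

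\emph{Item~\ref{item:winner}.} Fix $u \ne v$ (the case $u = v$ is trivial). If one of $u, v$ has bounded out-degree in $\bigcup_n \mathscr{T}_{\tau_n}$ then $\win(u,v) \cup \win(v,u)$ holds trivially, so assume both out-degrees are unbounded, so $\mathcal{B}(um), \mathcal{B}(vm) < \tau_\infty$ for all $m$. Observing that at the jump time $\tau_n = \mathcal{B}(um)$ one has $\outdeg{u, \mathscr{T}_{\tau_n}} > \outdeg{v, \mathscr{T}_{\tau_n}}$ whenever $\mathcal{B}(um) < \mathcal{B}(vm)$ (and symmetrically), one checks that $\win(u,v)$ holds iff $D_m := \mathcal{B}(um) - \mathcal{B}(vm) \le 0$ for all large $m$, and $\win(v,u)$ iff $D_m \ge 0$ for all large $m$; so the union fails iff $D_m$ changes sign infinitely often. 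Now $D_m = (\mathcal{B}(u) - \mathcal{B}(v)) + \sum_{j=1}^m Y_j$ with $Y_j := X(uj) - X(vj)$ mutually independent, symmetric, and $\sum_j Y_j$ having the law of $\sum_j (X_j - X'_j)$, hence convergent a.s., so $D_m \to D_\infty$ finite a.s. If $D_\infty \ne 0$, $D_m$ has a fixed sign eventually and we are done. If $D_\infty = 0$: when $\sum_j \Prob{Y_j \ne 0} < \infty$, Borel–Cantelli gives $Y_j = 0$ for all large $j$ a.s., so $D_m$ is eventually constant and no sign change occurs; when $\sum_j \Prob{Y_j \ne 0} = \infty$, Lemma~\ref{lem:no-atom} together with symmetry (any atomic support $(c_j)_{j>M}$ of $\sum_{j>M} Y_j$ must satisfy $c_j = 0$ for $j$ large, contradicting the divergence) shows $\sum_{j>M} Y_j$ is atomless for every $M$, so conditioning on $(\mathcal{B}(u) - \mathcal{B}(v), Y_1, \dots, Y_M)$ forces $\Prob{D_\infty = 0} = 0$ — a contradiction. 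Hence $\win(u,v) \cup \win(v,u)$ holds a.s.

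\emph{Item~\ref{item:no-winner}.} For $k > j$ the event $\{\sum_{\ell=1}^k X(uj\ell) < \sum_{\ell=j+1}^k X(u\ell)\}$ is equivalent to $\{\mathcal{B}(uj\cdot k) < \mathcal{B}(u\cdot k)\}$ and to $\{W_j + \sum_{\ell=j+1}^k Z_\ell < 0\}$, where $W_j := \sum_{\ell=1}^j X(uj\ell) \ge 0$ and $Z_\ell := X(uj\ell) - X(u\ell)$. The $Z_\ell$ ($\ell > j$) are mutually independent, symmetric, independent of $W_j$, and the partial sums $(\sum_{\ell=j+1}^k Z_\ell)_{k>j}$ have the law of $(\sum_{\ell=j+1}^k (X_\ell - X'_\ell))_{k>j}$, a series which diverges a.s.\ since deleting finitely many terms preserves divergence of $\sum_i (X_i - X'_i)$. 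By Lemma~\ref{lem:fluctuating-sum}, $\liminf_{k\to\infty} \sum_{\ell=j+1}^k Z_\ell = -\infty$ a.s., so conditioning on $W_j$ the walk drops below $-W_j$ a.s.; thus $\Prob{\exists k > j : W_j + \sum_{\ell=j+1}^k Z_\ell < 0} = 1$. Since this probability is the increasing limit over $N$ of $\Prob{\exists k \in (j,N] : W_j + \sum_{\ell=j+1}^k Z_\ell < 0}$, we may pick $\phi(j)$ to be any $N$ making the latter $\ge 1/2$; as the joint law of $(W_j, (Z_\ell)_{\ell>j})$ depends only on $j$, this $\phi(j)$ is uniform in $u$, and replacing it by $\max\{j+1, \phi(1), \dots, \phi(j)\}$ makes $\phi$ increasing without destroying the bound. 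The only points needing care are the reduction to Lemma~\ref{lem:fluctuating-sum} and checking the uniformity in $u$.
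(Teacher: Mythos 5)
Your proposal is correct, and Items~\ref{item:winner} and~\ref{item:no-winner} follow essentially the same route as the paper: the same reduction to the sign changes of $D_m = \mathcal{B}(um)-\mathcal{B}(vm)$, the same dichotomy via Lemma~\ref{lem:no-atom} (your symmetry argument showing any atomic support must eventually be zero is a slightly more explicit version of the paper's case split on~\eqref{eq:atomic-sum}), and the same use of Lemma~\ref{lem:fluctuating-sum} plus monotone convergence to define $\phi$. One small point in Item~\ref{item:winner}: when you condition on $(\mathcal{B}(u)-\mathcal{B}(v), Y_1,\dots,Y_M)$ you need $M$ chosen large enough that the tail $\sum_{j>M}Y_j$ is actually independent of $\mathcal{B}(u)-\mathcal{B}(v)$ --- if $v$ is a descendant of $u$ through its $j_0$th child, then $\mathcal{B}(v)$ depends on $X(u1),\dots,X(uj_0)$, so you must take $M\ge j_0$; the paper makes this choice explicit (its $N$), and you should too.

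Where you genuinely diverge is Item~\ref{item:non-expl-cor} in the explosive case. The paper argues: bounded maximal degree plus $|\mathscr{T}_t|=\infty$ forces infinitely many $K$-moderate individuals by time $t$, which contradicts non-explosivity of the $K$-moderate truncated CMJ process; that non-explosivity is Claim~\ref{clm:finite-k-moderate}, proved by feeding the truncated offspring law into the exponential-weight first-moment bound of Lemma~\ref{lem:non-expl}. You instead construct the explosion ray via the recursion $\tau_\infty=\min_i(\mathcal{B}(i)+\tau_\infty^{(i)})$, observe its increments tend to zero, and kill it with a first-moment count of $D$-moderate paths whose increments are all below $\eps$, weighted by indicators $\mathbf{1}\{X_i<\eps\}$ rather than by $e^{-\alpha X_i}$, with subcriticality supplied directly by Item~\ref{item:non-zero-explosion} of Assumption~\ref{ass:gen-ass} as $\eps\downarrow 0$. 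Both are at heart first-moment bounds on moderate paths; the paper's version is shorter because counting \emph{all} $K$-moderate individuals born before a finite time avoids the explosion-ray construction and the small-increment argument entirely, while yours has the minor advantage of never invoking Lemma~\ref{lem:non-expl} and of making visible exactly which ray would have to carry the explosion. Your ray construction is sound (the minimum is over the at most $D$ in-tree children, hence attained, and $\mathcal{B}(v_\ell)\uparrow\tau_\infty<\infty$ gives vanishing increments), so this is a legitimate alternative proof rather than a gap.
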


\begin{rmq}
Note that the proof of Item~\ref{item:winner} of Lemma~\ref{lem:finite-persistence-claims} uses similar ideas to the proof of \cite[Item~1 of Theorem~1.4]{leadership}. {\small\ensymboldremark }
\end{rmq}

\begin{proof}[Proof of Item~\ref{item:non-expl-cor} of Lemma~\ref{lem:finite-persistence-claims}]
First suppose that $\lim_{n \to \infty} \tau_{n} = \infty$ almost surely. For any $v = v_1 \cdots v_{m} \in \mathcal{U}$, with $m \geq 1$ we have
\[
\mathcal{B}(v) = \sum_{\ell = 1}
^{m} \sum_{i=1}^{v_{\ell}}  X(v_1 \cdots v_{\ell-1} i) < \infty \quad \text{almost surely,}
\]
by Item~\ref{item:finiteness} of Assumption~\ref{ass:gen-ass}. These two facts together imply that, for any $v \in \mathcal{U}$, $j \in \mathbb{N}$, there exists a (random) $N \in \mathbb{N}$ such that $\mathcal{B}(vj) \leq \tau_{N}$, hence $vj \in \mathscr{T}_{\tau_{N}}$. In particular, for any $v \in \mathcal{U}$, $\lim_{n \to \infty} \outdeg{v, \mathscr{T}_{\tau_{n}}} = \infty$, which implies the claim. 

Otherwise, we have $\lim_{n \to \infty} \tau_{n} < \infty$ with positive probability. Suppose that, with positive probability we have $\lim_{n \to \infty} \max_{u \in \mathscr{T}_{\tau_{n}}} \outdeg{u, \mathscr{T}_{\tau_{n}}} < \infty$. If 
\begin{equation} \label{eq:pos-max-deg}
\Prob{\lim_{n \to \infty} \max_{u \in \mathscr{T}_{\tau_{n}}} \outdeg{u, \mathscr{T}_{\tau_{n}}} < \infty, \lim_{n \to \infty} \tau_{n} < \infty} > 0,
\end{equation}
it must be the case that, for some $t > 0$, $\Prob{|\mathscr{T}_{t} |= \infty} > 0$, and therefore, by the pigeonhole principle, since the maximal degree is bounded, for some $t > 0$, $K \in \mathbb{N}$
\[
\Prob{\left|u \in \mathscr{T}_{t}\colon u \text{ is $K$-moderate} \right| = \infty} > 0.
\]
This now contradicts the following claim:
\begin{clm} \label{clm:finite-k-moderate}
    Under Assumption~\ref{ass:gen-ass}, almost surely, for all $t \in [0, \infty)$ we have 
    \[
    \left|u \in \mathscr{T}_{t}\colon u \text{ is $K$-moderate} \right| < \infty.
    \]
\end{clm}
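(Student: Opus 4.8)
The plan is to mimic the exponential-clock computation from the proof of Lemma~\ref{lem:non-expl}, but carried out over the (countable) set of $K$-moderate vertices only, and to exploit the fact that, by Item~\ref{item:non-zero-explosion} of Assumption~\ref{ass:gen-ass}, the associated Laplace-type sum over the \emph{truncated} offspring range $\{1,\dots,K\}$ can be made strictly less than one by taking the clock rate large. Concretely: fix $K \in \mathbb{N}$ (the argument below works for every $K$), and for $\alpha > 0$ let $Y_{\alpha} \sim \Exp{\alpha}$ be independent of $(\mathscr{T}_{t})_{t \geq 0}$. Exactly as in the proof of Lemma~\ref{lem:non-expl}, for $v = v_{1}\cdots v_{m} \in \mathcal{U}$ we have $\Prob{\mathcal{B}(v) \leq Y_{\alpha}} = \prod_{\ell=1}^{m} \E{e^{-\alpha \sum_{k=1}^{v_{\ell}} X_{k}}}$. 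Summing this over all $v$ that are $K$-moderate in the sense of~\eqref{eq:moderate-defn} (so each $v_{\ell}$ ranges over $\{1,\dots,K\}$), the sum factorizes across generations and collapses to a geometric series,
\[
\E{\bigl|\{u \in \mathscr{T}_{Y_{\alpha}} \colon u \text{ is } K\text{-moderate}\}\bigr|} \;=\; \sum_{m=0}^{\infty} \rho(\alpha)^{m}, \qquad \rho(\alpha) := \sum_{j=1}^{K} \E{e^{-\alpha \sum_{i=1}^{j} X_{i}}},
\]
which is finite as soon as $\rho(\alpha) < 1$.

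Next I would show that $\rho(\alpha) < 1$ for $\alpha$ large enough. Since $e^{-\alpha x} \downarrow \mathbf{1}\{x = 0\}$ as $\alpha \to \infty$ for every $x \in [0,\infty]$, monotone (dominated) convergence gives $\E{e^{-\alpha \sum_{i=1}^{j} X_{i}}} \downarrow \Prob{X_{1} = \dots = X_{j} = 0} = \prod_{i=1}^{j} \Prob{X_{i} = 0}$, using Item~\ref{item:mutual-independence}. Hence $\rho(\alpha) \downarrow \sum_{j=1}^{K} \prod_{i=1}^{j} \Prob{X_{i} = 0} \leq \sum_{j=1}^{\infty} \prod_{i=1}^{j} \Prob{X_{i} = 0} < 1$, the last inequality being precisely Item~\ref{item:non-zero-explosion}. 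Fixing $\alpha_{0}$ with $\rho(\alpha_{0}) < 1$, the displayed identity above is finite, so $\bigl|\{u \in \mathscr{T}_{Y_{\alpha_{0}}} \colon u \text{ is } K\text{-moderate}\}\bigr| < \infty$ almost surely.

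Finally I would pass from the random time $Y_{\alpha_{0}}$ to deterministic times. On $\{Y_{\alpha_{0}} \geq t\}$ one has $\mathscr{T}_{t} \subseteq \mathscr{T}_{Y_{\alpha_{0}}}$, and since $Y_{\alpha_{0}}$ is independent of the process,
\[
\E{\bigl|\{u \in \mathscr{T}_{t} \colon u \text{ is } K\text{-moderate}\}\bigr|}\,\Prob{Y_{\alpha_{0}} \geq t} \;\leq\; \E{\bigl|\{u \in \mathscr{T}_{Y_{\alpha_{0}}} \colon u \text{ is } K\text{-moderate}\}\bigr|} \;<\; \infty .
\]
As $\Prob{Y_{\alpha_{0}} \geq t} > 0$ for every $t \geq 0$, the first expectation is finite, hence $\bigl|\{u \in \mathscr{T}_{t} \colon u \text{ is } K\text{-moderate}\}\bigr| < \infty$ almost surely for each fixed $t$; a countable union over $t \in \mathbb{N}$ (and over $K \in \mathbb{N}$) together with the monotonicity of $t \mapsto \bigl|\{u \in \mathscr{T}_{t} \colon u \text{ is } K\text{-moderate}\}\bigr|$ then yields the claim for all $t \in [0,\infty)$ simultaneously.

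The only step that needs any thought is the second one — that the truncated Laplace sum $\rho(\alpha)$ dips below $1$ — and it is there that the truncation at $K$ is essential: $\rho(\alpha)$ is a finite sum of $K$ terms each at most $1$, so it is always finite, whereas without the extra hypothesis~\eqref{eq:persistent} the untruncated sum $\sum_{j=1}^{\infty}\E{e^{-\alpha\sum_{i=1}^{j}X_{i}}}$ may be infinite for every $\alpha > 0$. Everything else is routine bookkeeping mirroring the proof of Lemma~\ref{lem:non-expl}.
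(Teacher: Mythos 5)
Your proposal is correct and is essentially the paper's argument: the paper views the $K$-moderate vertices as a CMJ process with truncated offspring variables ($X^{(K)}_{j}=\infty$ for $j>K$) and invokes Lemma~\ref{lem:non-expl}, whereas you inline that lemma's exponential-clock computation directly over the $K$-moderate vertices; the decisive step in both is identical, namely that $\sum_{j=1}^{K}\E{e^{-\alpha\sum_{i=1}^{j}X_{i}}}\downarrow\sum_{j=1}^{K}\prod_{i=1}^{j}\Prob{X_{i}=0}<1$ as $\alpha\to\infty$ by Item~\ref{item:non-zero-explosion} of Assumption~\ref{ass:gen-ass}. Your explicit passage from the random time $Y_{\alpha_{0}}$ to all deterministic times via monotonicity is also the same mechanism the paper uses inside Lemma~\ref{lem:non-expl}.
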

We conclude that the right-side of~\eqref{eq:pos-max-deg} is $0$, which implies the result. 
\end{proof}
The proof of Claim~\ref{clm:finite-k-moderate} is similar to~\cite[Proposition~4.4]{inhom-sup-pref-attach}, however we include a proof for brevity and completeness. 
\begin{proof}[Proof of Claim~\ref{clm:finite-k-moderate}]
If $\mathscr{T}^{(K)}_{t}$ denotes the set $\left\{u \in \mathscr{T}_{t}\colon u \text{ is $K$-moderate}\right\}$, one readily verifies that the process $(\mathscr{T}^{(K)}_{t})_{t\geq 0}$ has the same distribution as a CMJ branching process with associated random variables $(X^{(K)}_{j})_{j \in \mathbb{N}}$ satisfying 
\begin{equation}
    X^{(K)}_{j} \sim \begin{cases}
        X_{j}  & \text{if $j \leq K$} \\
        \infty & \text{otherwise.}
    \end{cases}
\end{equation}
Now, for $\alpha > 0$, we have  
\begin{linenomath}
\begin{align} \label{eq:finite-laplace}
\sum_{j=1}^{\infty} \E{e^{-\alpha \sum_{i=1}^{j} X^{(K)}_{i}}} & = \sum_{j=1}^{K} \E{e^{-\alpha \sum_{i=1}^{j} X_{i}}} \\ & = \sum_{j=1}^{K} \left( \prod_{i=1}^{j} \Prob{X_{i} = 0} + \E{e^{-\alpha \sum_{i=1}^{j} X_{i}} \mathbf{1}_{\left\{\sum_{i=1}^{j} X_{i} > 1 \right\}}}\right). 
\end{align} 
\end{linenomath}
By Items~\ref{item:mutual-independence} and~\ref{item:finiteness} of Assumption~\ref{ass:gen-ass},  for $\alpha$ sufficiently large, the right-side of~\eqref{eq:finite-laplace} is smaller than one. Therefore, by Lemma~\ref{lem:non-expl}, the process $(\mathscr{T}^{(K)}_{t})_{t \geq 0}$ is non-explosive. In other words, almost surely, for all $t \in [0, \infty)$
	\begin{equation} \label{eq:non-exposive-L}
		\left|\mathscr{T}^{(K)}_{t}\right| < \infty.
	\end{equation}
 This implies the claim.
\end{proof}
We continue the proof of Lemma~\ref{lem:finite-persistence-claims}. 
\begin{proof}[Proof of Item~\ref{item:winner} of Lemma~\ref{lem:finite-persistence-claims}]
First note that having either  \[\lim_{n \to \infty} \outdeg{u, \mathscr{T}_{\tau_{n}}} < \infty \quad \text{ or } \quad \lim_{n \to \infty} \outdeg{v, \mathscr{T}_{\tau_{n}}} < \infty\] already implies that $\win(u,v) \cup \win(v,u)$ occurs. Therefore, we need only show $\win(u,v) \cup \win(v,u)$ occurs on the event that $\lim_{n \to \infty} \outdeg{u, \mathscr{T}_{\tau_{n}}} = \infty$ and $\lim_{n \to \infty} \outdeg{v, \mathscr{T}_{\tau_{n}}} = \infty$. 
Now, by the assumption that $\sum_{j=1}^{\infty} (X'_{j} - X_{j})$ converges almost surely, for $u, v \in \mathcal{U}$ we have
    \[
    \left|\mathcal{B}(u) - \mathcal{B}(v) + \sum_{j=1}^{\infty} \left(X(uj) - X(vj)\right)\right| < \infty \quad \text{ almost surely.} 
    \]
    Assume, without loss of generality, that $|u| \leq |v|$. Let $N$ be chosen such that $\mathcal{B}(v)$ is independent of $(X(u n))_{n \geq N}$. (For example, if $u$ is a non-parent ancestor of $v$, we may choose the value $N$ such that $u(N-1)$ is the ancestor of $v$, whilst if $v = uj$ for $j \in \mathbb{N}$, we can choose $N > j$.) Then, the sequences $(X(u n))_{n \geq N}$ and $(X(v n))_{n \geq N}$ are independent.
    We now have two cases: 
    \begin{enumerate}[label = (\Roman*)]
            \item First suppose that the sequence $(S_{j})_{j \in \mathbb{N}}$ defined by $S_{j} := X_{j} - X'_{j}$ does not satisfy Equation~\eqref{eq:atomic-sum}. Then, by Lemma~\ref{lem:no-atom}, the random variable 
            \[
                \sum_{j=N}^{\infty} (X(uj) - X(vj))
            \]
            contains no atom on $\mathbb{R}$. Hence, as the summands are independent
            \[
            \Prob{\mathcal{B}(u) - \mathcal{B}(v) + \sum_{j=1}^{\infty} \left(X(uj) - X(vj)\right) = 0} = 0.\]
            Therefore, almost surely, there exists $K_{0} \in \mathbb{N}$ such that for all $k \geq K_0$ either 
            \begin{equation} \label{eq:eventual-dominance}
            \mathcal{B}(u) - \mathcal{B}(v) + \sum_{j=1}^{k} \left(X(uj) - X(vj)\right) > 0 \quad \text{ or } \quad \mathcal{B}(u) - \mathcal{B}(v) + \sum_{j=1}^{k} \left(X(uj) - X(vj)\right) < 0. 
            \end{equation}
            In other words, for all $k \geq K_{0}$, we have 
            $\mathcal{B}(uk) > \mathcal{B}(vk)$ or $\mathcal{B}(uk) < \mathcal{B}(vk)$. This in turn implies that for all $k$ sufficiently large $u$ reaches out-degree $k$ in $(\mathscr{T}_{\tau_{n}})_{n \in \mathbb{N}}$ before $v$ reaches out-degree $k$ in $(\mathscr{T}_{\tau_{n}})_{n \in \mathbb{N}}$ or vice-versa. Thus $\win(u,v) \cup \win(v,u)$ occurs.
            \item Otherwise, the sequence $(S_{j})_{j \in \mathbb{N}}$ defined by $S_{j} := X_{j} - X'_{j}$ does satisfy Equation~\eqref{eq:atomic-sum}. In particular, this implies that $\sum_{j=1}^{\infty} \Prob{X(uj) \neq X(vj)} < \infty$, thus by the Borel--Cantelli lemma, almost surely, $X(uj) = X(vj)$ for all but finitely many $j \in \mathbb{N}$. Thus, for some $K_0 \in \mathbb{N}$, it is either the case that for all $k \geq K_0$~\eqref{eq:eventual-dominance} is satisfied, or for all $k \geq K_0$
            \[
            \mathcal{B}(u) - \mathcal{B}(v) + \sum_{j=1}^{k} \left(X(uj) - X(vj)\right) = 0. 
            \]
            In particular, for all $k \geq K_0$, $\mathcal{B}(uk) \geq \mathcal{B}(vk)$ or $\mathcal{B}(uk) \leq \mathcal{B}(vk)$,
            which again implies that $\win(u,v) \cup \win(v,u)$ occurs.
            \end{enumerate}
\end{proof}

\begin{proof}[Proof of Item~\ref{item:no-winner} of Lemma~\ref{lem:finite-persistence-claims}]
Note that, by Assumption~\ref{ass:gen-ass}, for any $j\in \mathbb{N}$, the values of $(X(u\ell) - X(uj\ell))_{\ell \geq j+1}$ are symmetric and independent. If, by assumption, $\sum_{i=1}^{\infty} (X_{i} - X'_{i})$ diverges almost surely, Lemma~\ref{lem:fluctuating-sum} implies that 
\[
\limsup_{n \to \infty} \sum_{\ell=j+1}^{n} (X(u\ell) - X(uj\ell)) = \infty \quad \text{almost surely.}
\]
Since Assumption~\ref{ass:gen-ass} also implies that $\sum_{\ell=1}^{j} X(u\ell) < \infty$ almost surely, we therefore have 
\[
\Prob{\exists k \in \mathbb{N}\colon \sum_{\ell = 1}^{k} X(uj\ell) < \sum_{\ell=j+1}^{k} X(u\ell)} = 1. 
\]
By monotone convergence, we can write the left hand side as \[\lim_{n \to \infty} \Prob{\exists k \leq n\colon \sum_{\ell = 1}^{k} X(uj\ell) < \sum_{\ell=j+1}^{k} X(u\ell)},\] and therefore, may define $\phi(j)$ such that
\[
\phi(j) := \inf\left\{n > j\colon \Prob{\exists k \leq n\colon \sum_{\ell = 1}^{k} X(uj\ell) < \sum_{\ell=j+1}^{k} X(u\ell)} \geq 1/2\right\}.
\]
\end{proof}

\subsubsection{Proof of Item~\ref{item:non-persistence} of Theorem~\ref{thm:persistence}} \label{sec:non-persistence}

\begin{proof}[Proof of Item~\ref{item:non-persistence} of Theorem~\ref{thm:persistence}]
Suppose that $\sum_{i=1}^{\infty} (X_{i} - X'_{i})$ converges almost surely, and assume, in order to obtain a contradiction, that  $(\mathscr{T}_{\tau_{n}})_{n \in \mathbb{N}}$ contains a persistent hub with positive probability. Then, by Item~\ref{item:non-expl-cor} of Lemma~\ref{lem:finite-persistence-claims}, for any persistent hub $u$, say, we have 
\begin{equation} \label{eq:degree-to-infinity} 
\lim_{n \to \infty} \outdeg{u, \mathscr{T}_{\tau_{n}}} = \infty.
\end{equation}
 On the other hand, for any $u \in \mathcal{U}$, $j \in \mathbb{N}$, with $\phi$ as defined in Lemma~\ref{lem:finite-persistence-claims}, we have 
\begin{linenomath}
\begin{align} \label{eq:inf-many-j-catch-up}
&\Prob{\exists k \leq \phi(j)\colon \mathcal{B}(uj) + \sum_{\ell = 1}^{k} X(uj\ell) < \mathcal{B}(u) + \sum_{\ell = 1}^{k} X(u\ell)} \\ & \hspace{3cm} = \Prob{\exists k \leq \phi(j)\colon \sum_{\ell = 1}^{k} X(uj\ell) < \sum_{\ell = j+1}^{k} X(u\ell)}. 
\end{align}
\end{linenomath}
As $\phi\colon \mathbb{N} \rightarrow \mathbb{N}$ is strictly increasing, the right-side of~\eqref{eq:inf-many-j-catch-up} involves independent events. Therefore, by Lemma~\ref{lem:finite-persistence-claims} and the Borel-Cantelli lemma, we have
\[ 
\Prob{\exists \text{ infinitely many } j \in \mathbb{N}, k \leq \phi(j)\colon \sum_{\ell = 1}^{k} X(uj\ell) < \sum_{\ell = j+1}^{k} X(u\ell)} = 1.
\]
Combining this with Equation~\eqref{eq:degree-to-infinity}, it must be the case that infinitely many $uj$ have out-degrees that `overtake' the out-degree of $u$ with probability $1$. Thus, for any $u \in \mathcal{U}$, $\Prob{u \text{ is a persistent hub}} = 0$. But then, taking a countable intersection
\[
\Prob{\exists u \in \mathcal{U}\colon u \text{ is a persistent hub}} = 0, 
\]
which contradicts the assumption that with positive probability $(\mathscr{T}_{\tau_{n}})_{n \in \mathbb{N}}$ contains a persistent hub. 
\end{proof}

\subsubsection{Proof of Item~\ref{item:persistent-hub} of Theorem~\ref{thm:persistence}} \label{sec:persistent-hub}

The proof of Item~\ref{item:persistent-hub} of Theorem~\ref{thm:persistence} uses Item~\ref{item:non-persistence} of Theorem~\ref{thm:persistence}, and Lemma~\ref{lem:finite-catch-up} below. We first state this lemma, prove  Item~\ref{item:persistent-hub} of Theorem~\ref{thm:persistence}, and then prove the lemma over the rest of the section.

Lemma~\ref{lem:finite-catch-up} relates to the number of individuals in $(\mathscr{T}_{t})_{t \geq 0}$ that `catch up' to each of their ancestors in out-degree. More precisely,  for $u, v \in \mathcal{U}$, with $v = v_1 \cdots v_m$, we define the event 
\[
\left\{uv \text{ catches up to } u\right\} := \left\{\exists j \colon \mathcal{B}(u v (v_1 + j)) \leq \mathcal{B}(u(v_1 + j)) \right\}.
\]
 In other words, $uv$ catches up to $u$ if, at some point, $uv$ produces $v_1 + j$ children before $u$ does, so that the out-degree of $uv$ in $(\mathscr{T}_{t})_{t \geq 0}$ `catches up' to the out-degree of $u$. 

\begin{lem} \label{lem:finite-catch-up}
Suppose that Equation~\eqref{eq:persistent} is satisfied. Then, in the process $(\mathscr{T}_{t})_{t \geq 0}$, the set 
    \begin{equation} \label{eq:catch-up-set}
    \mathcal{P} := \left\{u \in \mathcal{U}\colon u \text{ catches up to each of its ancestors}\right\}
    \end{equation}
    is finite almost surely.
\end{lem}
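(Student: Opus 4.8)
The plan is to show that the expected size of $\mathcal P$ is finite, which immediately gives that $\mathcal P$ is finite almost surely. To do this, I would first reduce the event ``$uv$ catches up to $u$'' to an ``overtaking'' event of the type controlled by Lemma~\ref{lem:overtake-bound}. Fix $u \in \mathcal U$ and $v = v_1 \cdots v_m \in \mathcal U$; the event that $uv$ catches up to $u$ asks that for some $j$, the birth time $\mathcal B(uv(v_1+j))$ of the $(v_1+j)$th child of $uv$ is at most the birth time $\mathcal B(u(v_1+j))$ of the $(v_1+j)$th child of $u$. Writing $\mathcal B(uv(v_1+j)) = \mathcal B(uv) + \sum_{\ell=1}^{v_1+j} X(uv\ell)$ and $\mathcal B(u(v_1+j)) = \mathcal B(u) + \sum_{\ell=1}^{v_1+j} X(u\ell)$, this becomes
\[
\mathcal B(uv) + \sum_{\ell=1}^{v_1+j} X(uv\ell) \;\le\; \mathcal B(u) + \sum_{\ell=1}^{v_1+j} X(u\ell),
\]
i.e., after subtracting the common first $v_1$ terms on the right and rearranging, an inequality comparing an independent copy of a partial sum of the $X$'s (the births of $uv$) against the tail increments $\sum_{\ell=v_1+1}^{v_1+j} X(u\ell)$ of $u$, offset by the finite ``head start'' $\mathcal B(u) - \mathcal B(uv) + \sum_{\ell=1}^{v_1} X(u\ell)$, which is $\mathcal F$-measurable data independent of everything below $uv$. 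This is exactly the shape of the event bounded in~\eqref{eq:maximal-bound}, with $k = v_1$, with $Y$ playing the role of the (negative of the) head start plus $\sum_{i=1}^{v_1} X_i$, and with $\alpha$ the parameter from~\eqref{eq:persistent}.

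Next, I would apply Lemma~\ref{lem:overtake-bound} to bound $\Prob{uv \text{ catches up to } u}$ by $\bigl(\prod_{i=v_1}^{\infty}\E{e^{\alpha(X'_i - X_i)}}\bigr)\,\E{e^{-\alpha(Y + \sum_{i=1}^{v_1-1}X_i)}}$, where the infinite product is finite by the second half of~\eqref{eq:persistent} provided $v_1 > K$ (for small $v_1 \le K$ one absorbs the finitely many extra factors into a constant, or handles them separately). The term $\E{e^{-\alpha \sum_{i=1}^{v_1-1} X_i}}$ is a factor that decays in $v_1$ and, crucially, summing these over all first-coordinates $v_1$ is controlled by the first half of~\eqref{eq:persistent}, $\sum_{j\ge1}\E{e^{-\alpha\sum_{i=1}^j X_i}} < 1$. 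To bound $\E{|\mathcal P|}$ I would sum $\Prob{u \in \mathcal P}$ over $u \in \mathcal U$. Since $u \in \mathcal P$ requires $u$ to catch up to \emph{every} ancestor, I can use the independence of the relevant increment families along the ancestral line to bound $\Prob{u \in \mathcal P}$ by a product, over each ancestor $w$ of $u$ with corresponding branching coordinate, of the per-ancestor catch-up probability; combined with the Laplace-transform sum being $<1$ this should yield a convergent geometric-type series $\E{|\mathcal P|} = \sum_{u} \Prob{u \in \mathcal P} < \infty$, in the same spirit as the computation $\E{|\mathscr T_{Y_\alpha}|} = 1 + \sum_{m\ge1}(\sum_j \E{e^{-\alpha\sum_{i=1}^j X_i}})^m$ in the proof of Lemma~\ref{lem:non-expl}.

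The main obstacle I anticipate is bookkeeping the dependencies correctly so that the per-ancestor bounds genuinely multiply. The events ``$u$ catches up to ancestor $w_1$'' and ``$u$ catches up to ancestor $w_2$'' are not independent — they share the waiting-time variables along the common segment of the path — so one cannot naively take a product of all of them. The fix is to be selective: rather than intersecting over all ancestors, I would retain, for each level, only the catch-up event that compares $u$'s line against the child of the ancestor \emph{at that level}, and organize these so that each uses a disjoint block of the i.i.d.\ families $(X(\cdot\,j))_j$, at the cost of conditioning on the $\mathcal F$-measurable head-start data at each stage (which is where the genuine independence assumption~\ref{item:mutual-independence} of Assumption~\ref{ass:gen-ass} and the strong Markov-type structure of the Ulam--Harris labelling get used). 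A secondary technical point is justifying the application of Lemma~\ref{lem:overtake-bound} conditionally, treating $Y$ (the head start) as independent of the fresh increment families below the relevant node — this is immediate from~\eqref{eq:def-ex} but should be stated carefully. Once the product structure is in place, finiteness of $\E{|\mathcal P|}$ follows from~\eqref{eq:persistent}, and hence $|\mathcal P| < \infty$ almost surely.
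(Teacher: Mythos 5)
Your first step --- rewriting ``$uv$ catches up to $u$'' as an overtaking event, applying Lemma~\ref{lem:overtake-bound} with $k$ tied to $v_1$, and summing the resulting bounds into a geometric series driven by $\sum_j \E{e^{-\alpha\sum_{i=1}^j X_i}}<1$ --- is exactly the paper's Proposition~\ref{prop:late-catch-up}, and that part of your plan is sound. The gap is in the parenthetical ``for small $v_1\le K$ one absorbs the finitely many extra factors into a constant, or handles them separately.'' Condition~\eqref{eq:persistent} only guarantees $\prod_{i=K+1}^{\infty}\E{e^{\alpha(X'_i-X_i)}}<\infty$; the individual factors $\E{e^{\alpha(X'_i-X_i)}}=\E{e^{\alpha X'_i}}\E{e^{-\alpha X_i}}$ for $i\le K$ may each be \emph{infinite} (the parameter $K$ is in the hypothesis precisely to permit this), so they cannot be absorbed into a constant. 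Nor is this a finite exceptional set of nodes: every node all of whose path coordinates are $\le K-1$ (the $(K-1)$-moderate nodes, an infinite subtree of $\mathcal{U}$) evades your bound at every level, and for such nodes the single-ancestor catch-up probability need not decay in a way that makes the first-moment sum converge. So the first-moment method, as you set it up, does not close over this part of $\mathcal{U}$, and ``handle them separately'' is where essentially all of the remaining work of the lemma lives.

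What the paper does for that part: it conditions on $\mathcal{F}_{\mathcal{B}(K)}$ (the birth of the root's $K$th child), uses non-explosivity of the truncated $(K-1)$-moderate process (Claim~\ref{clm:finite-k-moderate}, via Lemma~\ref{lem:non-expl}) to show that only finitely many $(K-1)$-moderate individuals are born before $\mathcal{B}(K)$, and takes the frontier $A^{+}$ of that finite set. Every deeper moderate node is a descendant of some $a\in A^{+}$, hence born after $\mathcal{B}(K)$, and to lie in $\mathcal{P}$ it must in particular catch up to $\varnothing$ --- which by then already has $K$ children. That head start shifts the relevant comparison to the tail increments $\sum_{k\ge K+1}X(k)$ of the root, so Lemma~\ref{lem:overtake-bound} applies with the product starting at $K+1$, where \eqref{eq:persistent} controls it. Separately, note that the obstacle you flag as the main one (dependence between the catch-up events for different ancestors of the same node) is not where the difficulty lies: the paper never intersects over all ancestors, but covers $\mathcal{P}$ by sets each defined through a \emph{single} catch-up event (to the immediate relevant ancestor, or to the root), and the geometric structure of $\sum_u \Prob{u\in\mathcal{P}}$ comes from the factorisation of $\E{e^{-\alpha Y}}$ over the path, not from multiplying per-ancestor probabilities. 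As written, your proposal is missing the idea needed to treat the moderate subtree, so it does not yet constitute a proof.
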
 

\begin{proof}[Proof of Item~\ref{item:persistent-hub} of Theorem~\ref{thm:persistence}]
First note that if the series $\sum_{i=1}^{\infty} (X_{i} - X'_{i})$ diverges almost surely, Equation~\eqref{eq:persistent} cannot be satisfied.
Indeed suppose otherwise. Then, by Lemma~\ref{lem:overtake-bound}, with $Y$ given by a deterministic constant $C > 0$ and $\lambda = \alpha$
\begin{linenomath}
\begin{align}
1 = \Prob{\limsup_{N \to \infty} \sum_{i=K+1}^{N}(X_{i} - X'_{i}) > C} & = \Prob{\exists N \in \mathbb{N}\colon \sum_{i=K+1}^{N}(X_{i} - X'_{i}) > C} 
\\ & \leq \left(\prod_{i=K+1}^{\infty} \E{e^{\alpha(X_{i} - X'_{i})}} \right) e^{-C},
\end{align}
\end{linenomath}
where the first equality follows from Lemma~\ref{lem:fluctuating-sum}. We may choose $C$ sufficiently large that the right side is smaller than one, thus obtaining a contradiction. 

Hence, Equation~\eqref{eq:persistent} implies that $\sum_{i=1}^{\infty} (X_{i} - X'_{i})$ converges almost surely. Therefore, by Equation~\eqref{eq:win-one} in Lemma~\ref{lem:finite-persistence-claims}, with $\win(u,v)$ as defined in~\eqref{eq:win-def}, as an intersection over a countable set, we have 
\begin{equation} \label{eq:finite-leadership}
\Prob{\bigcap_{S \subseteq \mathcal{U}\colon |S| < \infty}  \bigcap_{u,v \in S} \left\{\win(u,v) \cup \win(v,u) \right\}} = 1.  
\end{equation}
Note that, for any finite set $S$
\begin{linenomath}
\begin{align} \label{eq:extra}
& \bigcap_{u,v \in S} \left\{\win(u,v) \cup \win(v,u) \right\} \\ & \hspace{3cm}  = \left\{\exists u \in S, N \in \mathbb{N}\colon \; \forall n \geq N \; \max_{v \in S} \outdeg{v, \mathscr{T}_{\tau_{n}}} = \outdeg{u, \mathscr{T}_{\tau_{n}}}\right\}. 
\end{align}
\end{linenomath}
By assumption, $\mathcal{P}$ as defined in~\eqref{eq:catch-up-set} is finite almost surely. 
Therefore, Equations~\eqref{eq:finite-leadership} and~\eqref{eq:extra} imply that
\[
\Prob{\exists N \in \mathbb{N}, u \in \mathcal{P}: \; \forall n \geq N \; \max_{v \in \mathcal{P}} \outdeg{v, \mathscr{T}_{\tau_{n}}} = \outdeg{u, \mathscr{T}_{\tau_{n}}}} = 1. 
\] 
As any node of maximal out-degree must be a node that catches up to each of its ancestors, this implies that $(\mathscr{T}_{\tau_{n}})_{n \in \mathbb{N}_{0}}$ contains a persistent hub. 
\end{proof}
The rest of this section is dedicated to the proof of Lemma~\ref{lem:finite-catch-up}. First, we have the following. 
\begin{prop} \label{prop:late-catch-up}
   Suppose that Equation~\eqref{eq:persistent} is satisfied for $\alpha > 0$ and $K \in \mathbb{N}$. Then, for any $u \in \mathcal{U}$,  
    \begin{equation}
       \E{\left| \left\{v = v_1 \cdots v_{m} \in \mathcal{U} \colon v_1 \geq K, uv\text{ catches up to } u \right\}\right| }= \sum_{v\in \mathcal{U}\colon v_1 \geq K} \Prob{uv \text{ catches up to } u} < \infty. 
    \end{equation}
\end{prop}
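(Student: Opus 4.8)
The plan is to recognise $\{uv\text{ catches up to }u\}$ as an ``overshoot'' event of the type controlled by Lemma~\ref{lem:overtake-bound}, to bound $\Prob{uv\text{ catches up to }u}$ for each $v$ with $v_1\geq K$, and then to sum these bounds, using the two halves of~\eqref{eq:persistent} for the two ingredients of the bound. The asserted equality is just Tonelli's theorem (equivalently monotone convergence) applied to the non-negative sum $\sum_{v}\mathbf{1}(uv\text{ catches up to }u)$, so all the content is in proving convergence of the series.

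First I would carry out the reformulation. Fix $v=v_1\cdots v_m$ with $v_1\geq K$. Telescoping the recursion for $\mathcal{B}$ along the path from $uv_1$ down to $uv$ gives $\mathcal{B}(uv)-\mathcal{B}(uv_1)=\sum_{i=2}^{m}\sum_{\ell=1}^{v_i}X(uv_1\cdots v_{i-1}\ell)=:A_v\geq 0$ (an empty sum when $m=1$). Set $\xi_\ell:=X(uv(v_1+\ell))$ and $\xi'_\ell:=X(u(v_1+\ell))$ for $\ell\geq 1$, and $W:=\mathcal{B}(uv(v_1))-\mathcal{B}(u(v_1))=A_v+\sum_{\ell=1}^{v_1}X(uv\ell)\geq 0$. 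A direct computation of the two birth times shows that, for every $j\geq 1$,
\[
\mathcal{B}(uv(v_1+j))\leq\mathcal{B}(u(v_1+j))\iff W+\sum_{\ell=1}^{j}\xi_\ell\leq\sum_{\ell=1}^{j}\xi'_\ell,
\]
so that $\{uv\text{ catches up to }u\}=\bigcup_{j\geq 1}\{W+\sum_{\ell=1}^{j}\xi_\ell\leq\sum_{\ell=1}^{j}\xi'_\ell\}$. Since $u$, $uv$ and the nodes $uv_1\cdots v_{i-1}$ ($2\leq i\leq m$) are pairwise distinct Ulam--Harris labels, and displacement families attached to distinct labels are independent, the sequences $(\xi_\ell)_{\ell\geq 1}$, $(\xi'_\ell)_{\ell\geq 1}$ and the variable $W$ are mutually independent, with $(\xi_\ell)_{\ell\geq 1}\sim(\xi'_\ell)_{\ell\geq 1}$.

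Next I would apply Lemma~\ref{lem:overtake-bound} with $k=1$, $\lambda=\alpha$ and the triple $(\xi_\ell),(\xi'_\ell),W$ in place of $(X_i),(X'_i),Y$. Its hypothesis becomes $\prod_{\ell\geq 1}\E{e^{\alpha(\xi'_\ell-\xi_\ell)}}=\prod_{j\geq v_1+1}\E{e^{\alpha(X'_j-X_j)}}$; every factor has index $\geq v_1+1\geq K+1$, and for such $j$ the second half of~\eqref{eq:persistent} forces $X_j$ to have a finite exponential moment, so $\E{X'_j-X_j}=0$ is well defined and $\E{e^{\alpha(X'_j-X_j)}}\geq 1$ by Jensen; hence $\prod_{j\geq v_1+1}\E{e^{\alpha(X'_j-X_j)}}\leq\prod_{j\geq K+1}\E{e^{\alpha(X'_j-X_j)}}=:P<\infty$. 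As the $\sum_{i=1}^{k-1}$ term vanishes, the lemma (whose proof in fact controls the union over $j\in\mathbb{N}_{0}$, which is what absorbs the $j=1$ term above) gives $\Prob{uv\text{ catches up to }u}\leq P\,\E{e^{-\alpha W}}$. Now $W$ is a sum of independent blocks, one per node in $\{uv,uv_1,\dots,uv_1\cdots v_{m-1}\}$, the block at $uv$ distributed as $\sum_{\ell=1}^{v_1}X_\ell$ and that at $uv_1\cdots v_{i-1}$ as $\sum_{\ell=1}^{v_i}X_\ell$, so writing $a_j:=\E{e^{-\alpha\sum_{\ell=1}^{j}X_\ell}}$ we get $\E{e^{-\alpha W}}=a_{v_1}\prod_{i=2}^{m}a_{v_i}$. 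Summing over all $v=v_1\cdots v_m$ with $v_1\geq K$ and factoring the multiple series,
\[
\sum_{v\colon v_1\geq K}\Prob{uv\text{ catches up to }u}\leq P\sum_{m\geq 1}\Bigl(\sum_{v_1\geq K}a_{v_1}\Bigr)\Bigl(\sum_{j\geq 1}a_j\Bigr)^{m-1}\leq\frac{P\,p}{1-p}<\infty,
\]
where $p:=\sum_{j\geq 1}a_j=\sum_{j\geq 1}\E{e^{-\alpha\sum_{i=1}^{j}X_i}}<1$ by the first half of~\eqref{eq:persistent} (and $\sum_{v_1\geq K}a_{v_1}\leq p$).

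I expect the reformulation step to be the main obstacle, though it is more careful bookkeeping than genuine difficulty: one must isolate the ``head start'' $A_v$ that $uv$ carries below its ancestor $uv_1$, fold it together with $uv$'s first $v_1$ waiting times into a single non-negative variable $W=\mathcal{B}(uv(v_1))-\mathcal{B}(u(v_1))$, and verify that $W$, $(\xi_\ell)$ and $(\xi'_\ell)$ are genuinely mutually independent given the tree labelling; the naive choice $\mathcal{B}(uv)-\mathcal{B}(u)$ for the head start fails precisely because it shares the variables $X(u\ell)$, $\ell\leq v_1$, with the right-hand sequence. Once the event is in the shape demanded by Lemma~\ref{lem:overtake-bound}, the rest is forced: the first condition in~\eqref{eq:persistent} makes the sum over the ``downward'' coordinates $v_2,\dots,v_m$ a geometric series of ratio $p<1$, and the second condition keeps the moment generating product uniformly bounded by $P$ over all $v_1\geq K$ (which is exactly why the proposition restricts to $v_1\geq K$).
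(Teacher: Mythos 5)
Your proposal is correct and follows essentially the same route as the paper: both reduce $\{uv \text{ catches up to } u\}$ to the overshoot event of Lemma~\ref{lem:overtake-bound} with $\lambda = \alpha$, bound the moment-generating product uniformly by $\prod_{i=K+1}^{\infty}\E{e^{\alpha(X'_i - X_i)}}$ using $v_1 \geq K$ and Jensen, obtain the factorised bound $\prod_{\ell=1}^{m}\E{e^{-\alpha\sum_{k=1}^{v_\ell}X_k}}$, and sum the resulting geometric series via the first half of~\eqref{eq:persistent}. The only difference is cosmetic (you shift indices so the lemma is applied with $k=1$ and the head start folded into $Y$, whereas the paper takes $k = v_1+1$ with $Y$ the descent part only), and your handling of the off-by-one in the lemma's union over $j$ is careful and correct.
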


\begin{proof}
We choose $K \in \mathbb{N}$ and $\alpha > 0$ so that Equation~\eqref{eq:persistent} is satisfied. 
By applying Lemma~\ref{lem:overtake-bound}, with quantities chosen such that
\[\lambda = \alpha, \; Y = \sum_{\ell = 1}
^{m-1} \sum_{i=1}^{v_{\ell+1}}  X(u v_1 \cdots v_{\ell} i), \; (X_{i})_{i \in \mathbb{N}} = (X(u v_1 \cdots v_m k))_{k \in \mathbb{N}}, \text{ and }(X'_{i})_{i \in \mathbb{N}} = (X(u k))_{k \in \mathbb{N}},\] for $v = v_1 \cdots v_m \in \mathcal{U}$ such that $v_1 \geq K$, we have 
\begin{linenomath*}
\begin{align} \label{eq:applied-overtake}
\Prob{uv \text{ catches up to } u} & = \Prob{\exists j \colon \mathcal{B}(u v (v_1 + j)) \leq \mathcal{B}(u(v_1 + j))} 
\\ &  = \Prob{\exists j \colon \sum_{\ell = 1}
^{m-1} \sum_{i=1}^{v_{\ell+1}}  X(u v_1 \cdots v_{\ell} i)  + \sum_{k=1}^{v_1 + j} X(u v_1 \cdots v_{m} k) \leq  \sum_{k=v_1+1}^{v_1 + j} X(u k)}
\\ &  \stackrel{\eqref{eq:maximal-bound}}{\leq} \left(\prod_{i=K+1}^{\infty}\E{e^{\alpha (X'_{i} - X_{i})}} \right) \left(\prod_{\ell=1}^{m-1} \E{e^{-\alpha \sum_{k=1}^{v_{\ell+1}}X_{k}}}\right) \E{e^{-\alpha \sum_{k=1}^{v_1} X_k}}  
\\ & \hspace{0.15cm}= \left(\prod_{i=K+1}^{\infty}\E{e^{\alpha (X'_{i} - X_{i})}} \right) \left(\prod_{\ell=1}^{m} \E{e^{-\alpha \sum_{k=1}^{v_{\ell}}X_{k}}}\right). 
\end{align}
\end{linenomath*}
By summing over the possible values of $v \in \mathcal{U}$ we have 
\begin{linenomath*}
    \begin{align} \label{eq:catch-up-comp}
     &   \sum_{v\in \mathcal{U}\colon v_1 \geq K} \Prob{uv \text{ catches up to } u} = \sum_{m=1}^{\infty} \sum_{v \in \mathcal{U}, v_1 \geq K, |v| = m} \Prob{uv_{1} \cdots v_{m} \text{ catches up to } u}
        \\ & \hspace{1cm} \stackrel{\eqref{eq:applied-overtake}}{\leq} \left(\prod_{i=K+1}^{\infty}\E{e^{\alpha (X'_{i} - X_{i})}} \right) \sum_{m=1}^{\infty} \sum_{v_1 = K}^{\infty} \sum_{v_2 = 1}^{\infty} \cdots \sum_{v_{m} =1}^{\infty} 
    \left(\prod_{\ell = 1}^{m} \E{e^{-\alpha \sum_{k=1}^{v_{\ell}} X_{k}}}\right) 
\\ & \hspace{1.15cm} = \left(\prod_{i=K+1}^{\infty}\E{e^{\alpha (X'_{i} - X_{i})}} \right) \sum_{m=1}^{\infty} \sum_{v_1 = K}^{\infty} \E{e^{-\alpha\sum_{k=1}^{v_{1}} X_{k}}} \left(\sum_{s=1}^{\infty} \E{e^{-\alpha \sum_{k=1}^{s} X_{k}}}\right)^{m-1}
\\ & \hspace{1.15cm} < \left(\prod_{i=K+1}^{\infty}\E{e^{\alpha (X'_{i} - X_{i})}} \right) \sum_{m=1}^{\infty} \left(\sum_{s=1}^{\infty} \E{e^{-\alpha \sum_{k=1}^{s} X_{k}}}\right)^{m} 
\stackrel{\eqref{eq:persistent}}{<} \infty,
    \end{align}
\end{linenomath*}
where the last line follows from the fact that, since $\sum_{j=1}^{\infty} \E{e^{-\alpha \sum_{i=1}^{j} X_{i}}} < 1$ by Equation~\eqref{eq:persistent}, the geometric series converges. 
\end{proof}


We are finally ready to prove Lemma~\ref{lem:finite-catch-up}:
\begin{proof}[Proof of Lemma~\ref{lem:finite-catch-up}]
As in Proposition~\ref{prop:late-catch-up}, we choose $K \in \mathbb{N}$ and $\alpha > 0$ so that Equation~\eqref{eq:persistent} is satisfied. Recalling the definition from~\eqref{eq:moderate-defn}, we denote by \[\mathcal{U}_{K-1}:= \left\{u \in \mathcal{U}\colon u \text{ is $(K-1)$-moderate} \right\};\] i.e, the subset of $\mathcal{U}$ consisting of $(K-1)$-moderate individuals. We first condition on the sigma algebra $\mathcal{F}_{\mathcal{B}(K)}$ generated by the process up until the birth of $K$, the $K$th child of $\varnothing$. 
Now, since $\mathcal{B}(K) < \infty$ almost surely, by Lemma~\ref{lem:non-expl}, $|\mathscr{T}_{\mathcal{B}(K)}| < \infty$ almost surely. We set $\tilde{m}$ to be the maximum length of a $(K-1)$-moderate individual born before $K$, i.e.,
\[
\tilde{m} := \sup \left\{|u|\colon u \in \mathscr{T}_{\mathcal{B}(K)} \cap \mathcal{U}_{K-1}\right\}. 
\]
Then, define the random, $\mathcal{F}_{\mathcal{B}(K)}$-measurable sets
\[
A^{+} := \left\{u \in \mathcal{U}_{K-1}\colon |u| = \tilde{m} +1 \right\},  \quad \text{and} \quad A^{-} := \left\{u \in \mathcal{U}_{K-1}\colon |u| < \tilde{m} +1\right\}. 
\]
We use these sets to complete the proof of the claim. An informal overview of the argument is as follows. First, we argue that $A^{+}$ and $A^{-}$ are finite almost surely. Next, $A^{+}$ represents the maximal `boundary' of $(K-1)$-moderate nodes born before $K$. As the root $\varnothing$ is already `large', since $K$ has already been born, by exploiting a similar argument to Proposition~\ref{prop:late-catch-up}, we expect that each node in $A^{+}$ to have only finitely many descendants that catch up to $\varnothing$. Otherwise, a node either belongs to $A^{-}$, or is of the form $u = ab$, with $a \in A^{-}$ and $b= b_1 \cdots b_{\ell}$, with $b_{1} \geq K$. For the latter, we can exploit Proposition~\ref{prop:late-catch-up} to show that, for each $a \in A^{-}$, there exists only finitely many such individuals $ab$. 

In formalising this argument, it is helpful to define the following sets. For a given $u \in \mathcal{U}$ set 
\[
\mathcal{C}_{u} := \left\{uv\colon v = v_1 \cdots v_{m} \in \mathcal{U},  v_1 \geq K, uv \text{ catches up to } u \right\}
\]
and 
\[
\mathcal{D}_{u} := \left\{uv \in \mathcal{U}\colon |v| \geq 1, uv \text{ catches up to } \varnothing \right\}.
\]
Now, note that, with $\mathcal{P}$ as defined by~\eqref{eq:catch-up-set} we may write
\begin{equation} \label{eq:covering-catch-up}
    \mathcal{P} \subseteq \mathcal{C}_{\varnothing} \cup A^{-} \cup A^{+} \cup \left(\bigcup_{u \in A^{-}} \mathcal{C}_{u} \right) \cup \left(\bigcup_{a \in A^{+}} \mathcal{D}_{u} \right). 
\end{equation}
Indeed, $C_{\varnothing}$ covers the individuals $u$ with $u_{1} \geq K$ that catch up to $\varnothing$, whilst $\left(\bigcup_{u \in A^{-}} \mathcal{C}_{u} \right) \cup A^{-}$ and $\bigcup_{u \in A^{+}} \mathcal{D}_{u} $ cover descendants of $A^{-}$ and $A^{+}$ respectively, that catch up to all of their ancestors. 

We show that each of the sets on the right-hand side of~\eqref{eq:covering-catch-up} are finite almost surely, hence so is $\mathcal{P}$. 
\begin{enumerate}[label = (\Roman*)]
\item Proposition~\ref{prop:late-catch-up} showed that $\E{|\mathcal{C}_{\varnothing}|} < \infty$, hence $|\mathcal{C}_{\varnothing}|$ is finite almost surely. 
\item We know that $|A^{-}| = ((K-1)^{\tilde{m} + 1} - 1)/(K-2)$  and $|A^{+}| = (K-1)^{\tilde{m} +1}$. By Lemma~\ref{lem:non-expl}, which shows that $\tilde{m} < \infty$ almost surely, these are both finite almost surely. 
\item Since $|A^{-}| < \infty$ almost surely, we have  
\begin{linenomath}
\begin{align}
    \Prob{\bigg|\bigcup_{u \in A^{-}} \mathcal{C}_{u} \bigg| = \infty \, \bigg | \, \mathcal{F}_{\mathcal{B}(K)}} & = \Prob{\exists u \in A^{-}\colon \,  |\mathcal{C}_{u}| = \infty \, \bigg | \, \mathcal{F}_{\mathcal{B}(K)}}
    \\ & \leq \sum_{u \in A^{-}} \Prob{\left|\mathcal{C}_{u}\right| = \infty \, \big | \, \mathcal{F}_{\mathcal{B}(K)}} = 0, 
\end{align}
\end{linenomath}
almost surely, where the last equality follows from Proposition~\ref{prop:late-catch-up}. 
\item Finally, for $\left|\bigcup_{a \in A^{+}} \mathcal{D}_{a}\right|$, first note that by definition, for each $a \in A^{+}$, we have $\mathcal{B}(a) - \mathcal{B}(K) > 0$. Therefore, for each $a \in A^{+}$, we have, 
\end{enumerate}
\begin{linenomath*}
    \begin{align*}
        & \E{\left|\mathcal{D}_{a}\right|\, \big | \, \mathcal{F}_{\mathcal{B}(K)}} = \sum_{b \in \mathcal{U}} \Prob{ab\text{ catches up to } \varnothing \, \big | \, \mathcal{F}_{\mathcal{B}(K)}}
        \\ & = \sum_{b \in \mathcal{U}} \Prob{\exists j \colon \mathcal{B}(a) - \mathcal{B}(K) + \sum_{\ell = 0}
^{m-1} \sum_{i=1}^{b_{\ell+1}}  X(a b_1 \cdots b_{\ell} i)  + \sum_{k=1}^{K + 1+ j} X(a b_1 \cdots b_{m} k) \leq \sum_{k=K+1}^{K+1 + j} X(k) \, \bigg | \, \mathcal{F}_{\mathcal{B}(K)}}
\\ & \hspace{1cm} \leq \sum_{b \in \mathcal{U}} \Prob{\exists j \colon \sum_{\ell = 0}
^{m-1} \sum_{i=1}^{b_{\ell+1}}  X(a b_1 \cdots b_{\ell} i)  + \sum_{k=1}^{K + 1+ j} X(a b_1 \cdots b_{m} k) \leq \sum_{k=K+1}^{K+1 + j} X(k) \, \bigg | \, \mathcal{F}_{\mathcal{B}(K)}}
\\ & \hspace{1cm} = \sum_{b \in \mathcal{U}} \Prob{\exists j \colon \sum_{\ell = 0}
^{m-1} \sum_{i=1}^{b_{\ell+1}}  X(a b_1 \cdots b_{\ell} i)  + \sum_{k=1}^{K +1 + j} X(a b_1 \cdots b_{m} k) \leq \sum_{k=K+1}^{K+1 + j} X(k)} \quad \text{almost surely},
    \end{align*}
\end{linenomath*}
where the last equality follows from the fact that the random variables concerned are independent of $\mathcal{F}_{\mathcal{B}(K)}$. Now, by applying Lemma~\ref{lem:overtake-bound} in a similar manner to its usage in Equation~\eqref{eq:applied-overtake} from Proposition~\ref{prop:late-catch-up}, we may bound the previous above, so that, for $a \in A^{+}$

\begin{linenomath*}
\begin{align*}
\E{\left|\mathcal{D}_{a}\right| \, \big | \, \mathcal{F}_{\mathcal{B}(K)}} & \leq \left(\prod_{i=K+1}^{\infty}\E{e^{\alpha (X'_{i} - X_{i})}} \right) \sum_{m=1}^{\infty} \sum_{b_1 = 1}^{\infty} \cdots \sum_{b_{m} = 1}^{\infty} \left(\prod_{\ell=1}^{m} \E{e^{-\alpha \sum_{j=1}^{b_{\ell}} X_{j}}}\right) \E{e^{-\alpha \sum_{j=1}^{K} X_{j}}} \\ & = \left(\prod_{i=K+1}^{\infty}\E{e^{\alpha (X'_{i} - X_{i})}} \right)  \E{e^{-\alpha \sum_{j=1}^{K} X_{j}}} \sum_{m=1}^{\infty} \left(\sum_{j=1}^{\infty} \E{e^{-\alpha \sum_{i=1}^{j} X_{i}}}\right)^{m} \stackrel{\eqref{eq:persistent}}{<} \infty.
\end{align*}
\end{linenomath*}
Thus, summing the geometric series in the above display, and using the almost sure finiteness of $A^{+}$ we have 
\begin{linenomath*}
\begin{align*}
&        \E{\bigg|\bigcup_{a \in A^{+}} \mathcal{D}_{a}\bigg| \, \bigg | \, \mathcal{F}_{\mathcal{B}(K)}} 
< \infty, \quad \text{almost surely.}
\end{align*}
\end{linenomath*}
\end{proof}

\subsubsection{Proof of Theorem~\ref{thm:unique}} \label{sec:unique}
\begin{proof}[Proof of Theorem~\ref{thm:unique}] 
We first have the following claim:
\begin{clm} \label{clm:inf-many-equal}
    Under the assumptions of Theorem~\ref{thm:unique}, for any $u, v \in \mathbb{N}$, we have 
    \[
    \Prob{\deg(u, \mathscr{T}_{\tau_{n}}) = \deg(v, \mathscr{T}_{\tau_{n}}) \text{ for infinitely many } n\in \mathbb{N}} = 0. 
    \]
\end{clm}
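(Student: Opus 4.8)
The plan is to fix two distinct nodes $u,v$ and show that, almost surely, the event $E_k$ that $\deg(u,\mathscr{T}_{\tau_n}) = \deg(v,\mathscr{T}_{\tau_n}) = k$ for some $n$ occurs for only finitely many $k$. Since Equation~\eqref{eq:persistent} is in force, Lemma~\ref{lem:non-expl} gives $|\mathscr{T}_t| < \infty$ for all $t$, hence $\tau_n \to \infty$; combined with Item~\ref{item:finiteness} of Assumption~\ref{ass:gen-ass} this forces $\deg(u,\mathscr{T}_{\tau_n}) \to \infty$ and $\deg(v,\mathscr{T}_{\tau_n}) \to \infty$ almost surely (as in the proof of Item~\ref{item:non-expl-cor} of Lemma~\ref{lem:finite-persistence-claims}). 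Thus, if $\deg(u,\mathscr{T}_{\tau_n}) = \deg(v,\mathscr{T}_{\tau_n})$ for infinitely many $n$, the common value is unbounded and so $E_k$ occurs for infinitely many $k$; it therefore suffices to prove $\Prob{E_k \text{ occurs for infinitely many } k} = 0$.

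I would first make $E_k$ explicit. The out-degree of $u$ in $(\mathscr{T}_t)_{t \geq 0}$ equals $k$ exactly for $t \in [\mathcal{B}(uk),\mathcal{B}(u(k+1)))$, and similarly for $v$; if these two intervals overlap, the larger of their two left endpoints is a birth time (hence equals some $\tau_n$) lying in both, so that $\deg(u,\mathscr{T}_{\tau_n}) = \deg(v,\mathscr{T}_{\tau_n}) = k$. Consequently, writing $D_k := \mathcal{B}(uk) - \mathcal{B}(vk)$,
\[
E_k \subseteq \left\{ -X(u(k+1)) < D_k < X(v(k+1)) \right\}.
\]
Choosing $k_0 = k_0(u,v)$ large enough that neither $\mathcal{B}(u)$ nor $\mathcal{B}(v)$ involves any waiting time $X(u\ell)$ or $X(v\ell)$ with $\ell > k_0$ (as in the proof of Item~\ref{item:winner} of Lemma~\ref{lem:finite-persistence-claims}), one has, for $k \geq k_0$, that $X(u(k+1)), X(v(k+1))$ are independent copies of $X_{k+1}$ independent of $D_k$, and moreover $D_k = D_{k_0} + \sum_{\ell=k_0+1}^{k}\bigl(X(u\ell) - X(v\ell)\bigr)$. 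Conditioning on $D_k$ then yields $\Prob{E_k} \leq \Prob{|D_k| < \widetilde{X}_{k+1}}$ for an independent copy $\widetilde{X}_{k+1}$ of $X_{k+1}$, independent of $D_k$.

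Under Condition~\ref{item:borel-cantelli-ass}, I would reduce to the configuration in which one of $u,v$ is an ancestor of the other (the remaining configurations being treated analogously), identify the law of $D_k$ as a shifted difference of partial sums of $(X_\ell)$ and $(X'_\ell)$, and, using this together with the one-sided constraint on the sign of $D_k$ supplied by Item~\ref{item:winner} of Lemma~\ref{lem:finite-persistence-claims}, dominate $\sum_k \Prob{E_k}$ by a constant multiple of the series in Equation~\eqref{eq:borel-cantelli}; the first Borel--Cantelli lemma then gives $\Prob{E_k \text{ occurs for infinitely many } k} = 0$. Under Condition~\ref{item:conv-series-and-borel} I would argue instead via an almost sure limit. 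Its second part gives $\sum_k \Prob{X_k > \eps} < \infty$ for every $\eps > 0$, so $X_k \to 0$ almost surely by Borel--Cantelli, whence $X(u(k+1)) \vee X(v(k+1)) \to 0$ almost surely; and since Equation~\eqref{eq:persistent} forces $\sum_\ell (X_\ell - X'_\ell)$ to converge almost surely (Remark~\ref{rem:general-conj}), the displayed formula for $D_k$ shows $D_k \to D_\infty$ almost surely for some random $D_\infty$. Its first part, Lemma~\ref{lem:no-atom}, and the symmetry of the increments $X_\ell - X'_\ell$ together give $\Prob{D_\infty = 0} = 0$: an atom of $\sum_\ell (X_\ell - X'_\ell)$ would, by Lemma~\ref{lem:no-atom}, require centres $(c_\ell)$ with $\sum_\ell \Prob{X_\ell - X'_\ell \neq c_\ell} < \infty$, hence by symmetry also $\sum_\ell \Prob{X_\ell - X'_\ell \neq -c_\ell} < \infty$, forcing $c_\ell = 0$ for large $\ell$ and thus $\sum_\ell \Prob{X_\ell \neq X'_\ell} < \infty$, contradicting the first alternative; the second alternative, $\Prob{X_{j_0} \neq X'_{j_0}} = 1$, makes $X_{j_0}$ non-atomic, and a non-atomic waiting time then occurs inside $D_\infty$, again making the limit non-atomic. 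Finally, on the almost sure event $\{D_\infty \neq 0\}$ we have, for all large $k$, $|D_k| > |D_\infty|/2 > X(u(k+1)) \vee X(v(k+1))$, which is incompatible with $-X(u(k+1)) < D_k < X(v(k+1))$; hence $E_k$ fails for all large $k$, and $\Prob{E_k \text{ occurs for infinitely many } k} = 0$.

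The step I expect to be the main obstacle is the first branch: pinning down the distribution of $D_k$ for an arbitrary pair $(u,v)$ — which depends on whether $u,v$ are ancestrally related and on the difference of their birth times — and checking that, after the reduction, $\sum_k \Prob{E_k}$ is genuinely controlled by the series in Equation~\eqref{eq:borel-cantelli}. A lesser point of care, in the second branch, is to exhibit a non-atomic waiting time inside $D_\infty$ when the distinguished index $j_0$ with $\Prob{X_{j_0} \neq X'_{j_0}} = 1$ falls among the finitely many "correction" terms contained in $D_{k_0}$ rather than in the symmetric tail; this is fine because such a waiting time is then non-atomic and independent of the remaining terms of $D_\infty$.
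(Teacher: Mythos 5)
Your proposal follows essentially the same route as the paper's proof: both reduce the event to the sandwich inequality $-X(u(k+1)) < \mathcal{B}(uk) - \mathcal{B}(vk) < X(v(k+1))$ holding for infinitely many $k$, then dispose of Condition~\ref{item:borel-cantelli-ass} by Borel--Cantelli against \eqref{eq:borel-cantelli}, and of Condition~\ref{item:conv-series-and-borel} by combining non-atomicity of the limiting difference (via Lemma~\ref{lem:no-atom} and symmetry of the increments) with $X_k \to 0$ almost surely. Your write-up is, if anything, more explicit than the paper's about the independence truncation at $k_0$ and the atom-exclusion argument, and the step you flag as the main obstacle --- matching $\sum_k \Prob{E_k}$ against the series in \eqref{eq:borel-cantelli} for a general pair $(u,v)$ --- is treated no less tersely in the paper itself.
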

By Claim~\ref{clm:inf-many-equal}, and taking a union bound over the countably many pairs $u,v \in \mathcal{U}$, we have
\begin{equation} \label{eq:equal-inf-often}
\Prob{\exists u, v \in \mathcal{U}\colon \deg(u, \mathscr{T}_{\tau_{n}}) = \deg(v, \mathscr{T}_{\tau_{n}}) \text{ for infinitely many } n\in \mathbb{N}} = 0. 
\end{equation}
Now, any candidate persistent hub must `catch up' in out-degree of each of its ancestors, hence, by Lemma~\ref{lem:finite-catch-up}, the set  
\[
\mathcal{P} := \left\{u \in \mathcal{U}\colon \exists n \in \mathbb{N}_{0} \text{ such that } \deg(u, \mathcal{T}_{n}) = \max_{v \in \mathcal{T}_{n}} \deg(v, \mathcal{T}_{n}) \right\}
\]
is finite, almost surely. Suppose that, for $u, v \in \mathcal{P}$ we have $\deg(u, \mathscr{T}_{\tau_{n}}) = \max_{w \in \mathscr{T}_{\tau_{n}}} \deg(w, \mathscr{T}_{\tau_{n}})$ for infinitely many $n$, and $\deg(v, \mathscr{T}_{\tau_{n}}) = \max_{w \in \mathscr{T}_{\tau_{n}}} \deg(w, \mathscr{T}_{\tau_{n}})$ for infinitely many $n$. As the out-degree of $u$ needs to `catch up' to the out-degree of $v$ for $u$ to become an index of maximal out-degree, it must be the case that $\deg(u, \mathscr{T}_{\tau_{n}}) = \deg(v, \mathscr{T}_{\tau_{n}})$ for infinitely many $n \in \mathbb{N}$. By Equation~\eqref{eq:equal-inf-often}, it must be the case that $u=v$ almost surely, proving uniqueness.  
\end{proof}
\begin{proof}[Proof of Claim~\ref{clm:inf-many-equal}]
Suppose that for $u, v \in \mathcal{U}$, we have $\deg(u, \mathscr{T}_{\tau_{n}}) = \deg(v, \mathscr{T}_{\tau_{n}})$ for infinitely many $n \in \mathbb{N}$ with positive probability. Then, it must be the case that, for infinitely many $j \in \mathbb{N}$,
\[
\mathcal{B}(u) + \sum_{i=1}^{j} X(ui) \leq \mathcal{B}(v) + \sum_{i=1}^{j} X(vi) < \mathcal{B}(u) + \sum_{i=1}^{j+1} X(ui), 
\]
which implies that
\begin{equation} \label{eq:infinitely-many-j}
0 \leq \mathcal{B}(u) - \mathcal{B}(v) + \sum_{i=1}^{j} (X(uj) - X(vj)) < X(u(j+1)) \quad \text{for infinitely many $j$}. 
\end{equation}
Now, 
\begin{enumerate}[label = (\Roman*)]
\item If Item~\ref{item:borel-cantelli-ass} of Theorem~\ref{thm:unique} is satisfied, by the Borel-Cantelli lemma, we almost surely have $0 \leq \mathcal{B}(u) - \mathcal{B}(v) + \sum_{i=1}^{j} (X(ui) - X(vi)) < X(u(j+1))$ for only finitely many $j$, thus Equation~\eqref{eq:infinitely-many-j} cannot be satisfied with positive probability. 
\item Otherwise, if Item~\ref{item:conv-series-and-borel} of Theorem~\ref{thm:unique} is satisfied, first, by Lemma~\ref{lem:no-atom}, the almost surely convergent series  $\sum_{i=1}^{\infty} (X(ui) - X(vi))$ contains no atom on $\mathbb{R}$. Hence, the random variable
\[\mathcal{B}(u) - \mathcal{B}(v) + \sum_{i=1}^{\infty} (X(ui) - X(vi))
\]
contains no atom at $0$. On the other hand, by the Borel-Cantelli lemma, almost surely, for any $\eps > 0$, we have $X_{j} \leq \eps$ for all but finitely many $j$. 
Now, suppose that~\eqref{eq:infinitely-many-j} is satisfied. Then, for any $\eps > 0$ 
\[
0 \leq \mathcal{B}(u) - \mathcal{B}(v) + \sum_{i=1}^{\infty} (X(ui) - X(vi)) < \eps. 
\]
This can only be the case if $\mathcal{B}(u) - \mathcal{B}(v) + \sum_{i=1}^{\infty} (X(ui) - X(vi))$ contains an atom at $0$, a contradiction. 
\end{enumerate}
\end{proof}

\subsection{Proofs of Theorems~\ref{thm:main} and~\ref{thm:superlinear}} \label{sec:pref-attach-proofs}
For the proof of Theorem~\ref{thm:main}, first suppose that, given the random sequence $(F(k))_{k \in \mathbb{N}_0}$, we define the sequence $(X_{i})_{i \in \mathbb{N}_0}$ such that, 
\begin{equation} \label{eq:waiting-time-cont-embed}
X_{i} \sim Y_{i} \quad \text{where, conditional on $F(i-1)$, we have} \quad Y_{i} \sim \Exp{F(i-1)}.
\end{equation}
In other words, each $X_{i}$ is defined as a mixture of exponential random variables, with rate parameter $F(i-1)$.
Define the process $(\mathscr{T}_{t})_{t \geq 0}$ with this choice of $(X_{i})_{i \in \mathbb{N}_0}$. Then, applying a standard embedding procedure that exploits the memoryless property of the exponential distribution and properties of minima of independent exponential random variables (see, for example, \cite[Section~2.1]{rec-trees-fit}), we have, up to re-labelling of nodes, 
\[(\mathscr{T}_{\tau_{n}})_{n \in \mathbb{N}_{0}} \sim (\mathcal{T}_{n})_{n \in \mathbb{N}_0}.\] 
It thus suffices to apply Theorems~\ref{thm:persistence} and~\ref{thm:unique} to the process $(\mathscr{T}_{\tau_{n}})_{n \in \mathbb{N}_{0}}$, with $(X_{i})_{i \in \mathbb{N}_0}$ as defined in~\eqref{eq:waiting-time-cont-embed}.   

We start with the proofs of Items~\ref{item:non-preference} and~\ref{item:preference-one} of Theorem~\ref{thm:persistence}, which are straightforward applications of the more general Theorems~\ref{thm:persistence} and Theorem~\ref{thm:unique}. 

\subsubsection{Proof of Theorem~\ref{thm:main}}
\begin{proof}[Proof of Theorem~\ref{thm:main}]
First note that, with $(X_{i})_{i \in \mathbb{N}}$ as defined in~\eqref{eq:waiting-time-cont-embed}, Assumption~\ref{ass:gen-ass} is satisfied: mutual independence of the $(X_{i})_{i\in \mathbb{N}}$ follows from the independence of $(F(i))_{i \in \mathbb{N}_{0}}$, whilst, by the properties of the exponential distribution, for any $j \in \mathbb{N}$, we have $0< X_{j} < \infty$ almost surely, which implies the second and third criteria. 

Now, by \cite[Item~1 of Theorem~1.4 \& Item~2 of Theorem~1.10]{leadership} \footnote{Note that `strict leadership' in that model implies `leadership'.}, if $(X_{i})_{i \in \mathbb{N}}$ and $(X'_{i})_{i \in \mathbb{N}}$ are independent collections of independent random variables distributed according to~\eqref{eq:waiting-time-cont-embed}, we have 
\[
\sum_{i=1}^{\infty} (X_{i} - X'_{i}) \quad \text{converges almost surely, if and only if} \quad \sum_{i=0}^{\infty} \frac{1}{F(i)^2} < \infty \quad \text{almost surely.}
\]
Thus, Item~\ref{item:non-preference} of Theorem~\ref{thm:main} already follows from Item~\ref{item:non-persistence} of Theorem~\ref{thm:persistence}. 

For Item~\ref{item:preference-one} of Theorem~\ref{thm:main}, suppose that $\lambda$ satisfies~\eqref{eq:lambda-eq}. Note also, that the quantity appearing in~\eqref{eq:lambda-eq} is decreasing in $\lambda$. Therefore, using the formula for the Laplace transform of an exponential random variable, and the independence of $(F(i))_{i \in \mathbb{N}_{0}}$, for some $\alpha \geq \lambda$ we have 
\begin{equation} \label{eq:alpha-candidate}
\sum_{i=0}^{\infty} \prod_{j=0}^{i}\E{\frac{F(j)}{F(j) + \alpha}} = \sum_{i=1}^{\infty} \E{e^{-\alpha \sum_{j=1}^{i} X_{j}}} < 1.
\end{equation}
Now, by Equation~\eqref{eq:det-lower-bound}, for any $\eta > 0$ such that $\eta \leq \inf_{i \geq k} x_{i}$, again by using properties of the exponential distribution, we have 
\begin{linenomath}
\begin{align} \label{eq:overtake-exp}
\prod_{i=k+1}^{\infty}\E{\exp{\left(\eta \left(X'_{i} - X_{i}\right) \right)}} &\leq  \prod_{i=k}^{\infty} \frac{x_{i}^{2}}{x_{i}^2 - {\eta}^2} 
=  \prod_{i=k}^{\infty}\left(1 +  \frac{{\eta}^{2}}{x_{i}^2 - {\eta}^2}\right) \leq \exp{\left(\sum_{i=k}^{\infty} \frac{{\eta}^{2}}{x_{i}^2 - {\eta}^2}\right)}.  
\end{align}
\end{linenomath}
If we choose $\eta = \eta_{k} := \sqrt{\left(2\sum_{i=k}^{\infty} \frac{1}{x_{i}^2} \right)^{-1}}$, note that, for each $i \geq k$, $\eta_{k} \leq x_{i}/\sqrt{2}$, and we readily verify that~\eqref{eq:overtake-exp} is bounded by $e^2$. One can make this choice whenever $\sum_{i=1}^{\infty} \frac{1}{x_{i}^2} < \infty$, and, moreover, since $\lim_{k \to \infty} \sum_{i=k}^{\infty} \frac{1}{x_{i}^2} = 0$, we have $\lim_{k \to \infty} \eta_{k} = \infty$. Thus, with $\alpha > 0$ as defined in~\eqref{eq:alpha-candidate}, if we choose $K := \inf{\left\{k \in \mathbb{N}\colon \eta_{k} \geq \alpha \right\}}$, Equation~\eqref{eq:persistent} of Theorem~\ref{thm:persistence} is satisfied. This proves the existence of a persistent hub. 

For uniqueness, we use Item~\ref{item:conv-series-and-borel} of Theorem~\ref{thm:unique}. The first condition is readily satisfied, since, by absolute continuity of the exponential distribution, clearly for any $j \in \mathbb{N}$, we have $\Prob{X_{j} \neq X'_{j}} = 1$. For the second condition, by using the inequality $e^{-x} \leq \frac{1}{x^{2}}$ and the definition of the exponential distribution, for any $\eps > 0$ we have
\begin{equation} \label{eq:borel-2}
\sum_{j=1}^{\infty} \Prob{X_{j} > \eps} = \sum_{j=0}^{\infty} \E{e^{-\eps F(j)}} \leq \frac{1}{\eps} \sum_{j=0}^{\infty} \frac{1}{x_{j}^2} < \infty. 
\end{equation}
The result follows. 
\end{proof}

\subsubsection{The proof of Theorem~\ref{thm:superlinear}}
This proof relies more heavily on the discrete dynamics of the generalised preferential tree. We first show that, the maximal out-degree of the process grows sufficiently quickly. 
In this regard, define
\[
\mathcal{M}_{n} := \max_{u \in \mathcal{T}_{n}} \outdeg{(u, \mathcal{T}_{n})}. 
\]
Recall the definition of $(\mathcal{Z}_{i})_{i \in \mathbb{N}_{0}}$ from~\eqref{eq:discrete-dynamics}.

First note that, if $N_{k}(n)$ denotes the number of nodes of out-degree $k$ in $\mathcal{T}_{n}$, by the handshaking lemma, we have 
\[\sum_{k=0}^{\mathcal{M}_{n}}(k+1)N_{k}(n) = 2n.\]
Therefore, under the condition~\eqref{eq:controlled-superlinear}, for all $n \in \mathbb{N}$, we have the deterministic bound
\begin{equation} \label{eq:det-parti-bounds}
\mathcal{Z}_{n} = \sum_{k=0}^{\mathcal{M}_{n}} f(k) N_{k}(n) \leq \frac{\kappa f(\mathcal{M}_{n})}{\mathcal{M}_{n} + 1} \sum_{k=0}^{\mathcal{M}_{n}} (k+1) N_{k}(n) = \frac{2n\kappa f(\mathcal{M}_{n})}{\mathcal{M}_{n}+1}.
\end{equation}

We then have the following lemma, closely related to Proposition 12 of Galashin:
\begin{lem} \label{lem:max-growth}
For any $\eps > 0$, there exists $r > 0$ such that
\begin{equation} \label{eq:max-growth}
    \Prob{\forall n \in \mathbb{N}\colon \mathcal{M}_{n}n^{-1/(2\kappa)} \geq r} = \Prob{\forall n \in \mathbb{N}\colon \mathcal{M}_{n} \geq \left\lceil n^{1/(2\kappa)}r\right\rceil} \geq 1 - \eps. 
\end{equation}
\end{lem}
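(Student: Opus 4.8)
The plan is to compare the maximal out-degree $\mathcal{M}_n$ with a supermartingale, run the comparison twice in a bootstrap, and apply Doob's maximal inequality; this is in the spirit of \cite[Proposition~12]{galashin}. Write $\mathcal{F}_n := \sigma(\mathcal{T}_0,\dots,\mathcal{T}_n)$. Since exactly one edge is added at each step, $\mathcal{M}_{n+1}\in\{\mathcal{M}_n,\mathcal{M}_n+1\}$, and $\mathcal{M}_{n+1}=\mathcal{M}_n+1$ precisely when the node sampled at step $n+1$ has current maximal out-degree; hence, using~\eqref{eq:det-parti-bounds},
\[
\Prob{\mathcal{M}_{n+1}=\mathcal{M}_n+1\mid\mathcal{F}_n}\ \geq\ \frac{f(\mathcal{M}_n)}{\mathcal{Z}_n}\ \geq\ \frac{\mathcal{M}_n+1}{2n\kappa},\qquad n\geq 1.
\]
Writing $p_n$ for the left-hand side and using $\E{(\mathcal{M}_{n+1}+1)^{-1}\mid\mathcal{F}_n}=(\mathcal{M}_n+1)^{-1}-p_n\big[(\mathcal{M}_n+1)(\mathcal{M}_n+2)\big]^{-1}$, for any positive sequence $(c_n)$ the process $c_n/(\mathcal{M}_n+1)$ is a nonnegative supermartingale as soon as $c_{n+1}/c_n\leq\big(1-\tfrac{\mathcal{M}_n+1}{2n\kappa(\mathcal{M}_n+2)}\big)^{-1}$ for every $n$.

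\emph{Crude bound.} Since $\mathcal{M}_n\geq 1$ for $n\geq 1$ we have $\tfrac{\mathcal{M}_n+1}{\mathcal{M}_n+2}\geq\tfrac23$, so the choice $c^{(1)}_n:=\prod_{k=1}^{n-1}\big(1-\tfrac1{3k\kappa}\big)^{-1}$ (well defined because $\kappa\geq 1$, and satisfying $c^{(1)}_n\sim\mathrm{const}\cdot n^{1/(3\kappa)}$) makes $V^{(1)}_n:=c^{(1)}_n/(\mathcal{M}_n+1)$ a nonnegative supermartingale with $\E{V^{(1)}_1}=\tfrac12$. By Doob's maximal inequality, for any $\eps>0$ I can fix $a_1>0$ and $n_1\in\mathbb{N}$ so that the event $E_1:=\{\mathcal{M}_n\geq a_1 n^{1/(3\kappa)}\text{ for all }n\geq n_1\}$ has $\Prob{E_1}\geq 1-\eps/2$.

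\emph{Bootstrap.} On $E_1$ one has $\tfrac{\mathcal{M}_n+1}{\mathcal{M}_n+2}\geq 1-\tfrac1{a_1 n^{1/(3\kappa)}}$ for $n\geq n_1$, so with $\sigma:=\inf\{n\geq n_1:\mathcal{M}_n<a_1 n^{1/(3\kappa)}\}$ the stopped process $V^{(2)}_{n\wedge\sigma}$, where $V^{(2)}_n:=c^{(2)}_n/(\mathcal{M}_n+1)$ and $c^{(2)}_n:=\prod_{k=n_1}^{n-1}\big(1-\tfrac1{2k\kappa}(1-\tfrac1{a_1 k^{1/(3\kappa)}})\big)^{-1}$, is a nonnegative supermartingale for $n\geq n_1$ with $\E{V^{(2)}_{n_1}}\leq\tfrac12$. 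Because $\sum_k k^{-1-1/(3\kappa)}<\infty$, taking logarithms gives $\log c^{(2)}_n=\tfrac1{2\kappa}\log n+O(1)$, i.e. $c^{(2)}_n\sim\mathrm{const}\cdot n^{1/(2\kappa)}$. Applying Doob's inequality to $V^{(2)}_{\cdot\wedge\sigma}$ and intersecting with $E_1$ (on which $\sigma=\infty$, so $V^{(2)}_{n\wedge\sigma}=V^{(2)}_n$) yields $r_0>0$ with $\Prob{\mathcal{M}_n\geq r_0 n^{1/(2\kappa)}\text{ for all }n\geq n_1}\geq 1-\eps$. Finally, since $\mathcal{M}_n\geq 1$ for $1\leq n<n_1$, shrinking $r_0$ to $r:=\min\{r_0,n_1^{-1/(2\kappa)}\}$ extends the bound to all $n\in\mathbb{N}$; and $\mathcal{M}_n\geq r n^{1/(2\kappa)}$ is equivalent to $\mathcal{M}_n\geq\lceil r n^{1/(2\kappa)}\rceil$ because $\mathcal{M}_n$ is an integer, which is the claim.

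\emph{Main obstacle.} The delicate point is that one pass of the comparison is inherently lossy: the discrete correction $\tfrac{\mathcal{M}_n+1}{\mathcal{M}_n+2}$ is bounded below only by $\tfrac23$ when $\mathcal{M}_n$ can be as small as $1$, which produces the exponent $1/(3\kappa)$ rather than $1/(2\kappa)$. The bootstrap fixes this: once a crude polynomial lower bound on $\mathcal{M}_n$ is in hand, this correction tends to $1$ at a \emph{summable} rate, so the logarithmic sum defining $c^{(2)}_n$ telescopes to the sharp constant $\tfrac1{2\kappa}$. The technical care is to run the improved supermartingale only up to $\sigma$, since the sharper inequality is available only on $E_1$.
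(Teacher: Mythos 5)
Your proof is correct, and it follows the same basic strategy as the paper (a supermartingale comparison for the reciprocal of the maximal degree, driven by the deterministic partition-function bound~\eqref{eq:det-parti-bounds}, followed by Doob's maximal inequality, in the spirit of Galashin). The one substantive difference is your choice of functional: you track $1/(\mathcal{M}_n+1)$, for which the one-step drift is $p_n\bigl[(\mathcal{M}_n+1)(\mathcal{M}_n+2)\bigr]^{-1}$, and the resulting correction factor $\tfrac{\mathcal{M}_n+1}{\mathcal{M}_n+2}$ forces you into a crude first pass with exponent $1/(3\kappa)$ and then a stopped-process bootstrap to recover $1/(2\kappa)$. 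The paper instead tracks $1/\mathcal{M}_n$, for which the drift is $q_n\bigl[\mathcal{M}_n(\mathcal{M}_n+1)\bigr]^{-1}$ with $q_n = f(\mathcal{M}_n)/\mathcal{Z}_n$; the factor $\mathcal{M}_n+1$ in the denominator then cancels \emph{exactly} against the $\mathcal{M}_n+1$ in~\eqref{eq:det-parti-bounds}, yielding $\E{1/\mathcal{M}_{n+1}\mid\mathcal{T}_n}\leq\tfrac{1}{\mathcal{M}_n}\bigl(1-\tfrac{1}{2n\kappa}\bigr)$ with no loss, so that $\tfrac{1}{\mathcal{M}_n}\prod_{i=1}^{n-1}(1-\tfrac{1}{2i\kappa})^{-1}$ is already a supermartingale with the sharp normalisation $\sim n^{1/(2\kappa)}$ and a single application of Doob's inequality suffices. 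So the ``main obstacle'' you identify is not inherent to the method but an artifact of normalising by $\mathcal{M}_n+1$ rather than $\mathcal{M}_n$; your bootstrap handles it correctly (the stopping time $\sigma$ and the intersection with $E_1$ are set up properly, and the summability of $\sum_k k^{-1-1/(3\kappa)}$ does give the sharp constant in $\log c^{(2)}_n$), but it can be dispensed with entirely.
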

\begin{rmq}
Lemma~\ref{lem:max-growth} uses a similar argument to \cite[Proposition~12]{galashin}. {\small\ensymboldremark }
\end{rmq}

\begin{proof}[Proof of Lemma~\ref{lem:max-growth}]
First, we prove the following claim:    
\begin{clm} \label{clm:super-martingale}
The sequence defined by $\mathcal{S}_1 := \frac{1}{\mathcal{M}_{1}} = 1$, and 
\[
\mathcal{S}_{n} := \frac{1}{\mathcal{M}_{n}}\prod_{i=1}^{n-1} \left(1-\frac{1}{2i\kappa}\right)^{-1}, \quad n \geq 2,
\]
is a super-martingale with respect to the filtration generated by $(\mathcal{T}_{i})_{i \in \mathbb{N}}$.
\end{clm}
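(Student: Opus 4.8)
The plan is to establish the supermartingale property by a direct one‑step computation of $\E{\mathcal{S}_{n+1} \mid \mathcal{F}_{n}}$, where $(\mathcal{F}_{n})_{n \in \mathbb{N}}$ denotes the filtration generated by $(\mathcal{T}_{i})_{i \in \mathbb{N}}$. Two preliminary remarks set things up. First, taking $i = n$ in~\eqref{eq:controlled-superlinear} forces $\kappa \geq 1$, so each factor $1 - \frac{1}{2 i \kappa}$ lies in $[\tfrac12, 1)$; hence $\prod_{i=1}^{n-1}\left(1 - \frac{1}{2i\kappa}\right)^{-1}$ is a well‑defined positive deterministic constant, and since $\mathcal{T}_{1}$ already contains a node of out‑degree $1$ we have $\mathcal{M}_{n} \geq 1$ for every $n \geq 1$, so $1/\mathcal{M}_{n}$ is well defined, $\mathcal{S}_{n}$ is bounded above by the deterministic constant $\prod_{i=1}^{n-1}\left(1-\frac{1}{2i\kappa}\right)^{-1}$, and in particular $\mathcal{S}_{n}$ is integrable. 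The base case is immediate, as $\mathcal{M}_{1} = 1$ gives $\mathcal{S}_{1} = 1$. Second, in passing from $\mathcal{T}_{n}$ to $\mathcal{T}_{n+1}$ exactly one edge is added, so $\mathcal{M}_{n+1} \in \{\mathcal{M}_{n}, \mathcal{M}_{n}+1\}$, with $\mathcal{M}_{n+1} = \mathcal{M}_{n} + 1$ precisely when the newly arrived node attaches to one of the $N_{\mathcal{M}_{n}}(n) \geq 1$ nodes currently of out‑degree $\mathcal{M}_{n}$; by~\eqref{eq:discrete-dynamics} this event has conditional probability
\[
p_{n} := \frac{N_{\mathcal{M}_{n}}(n)\, f(\mathcal{M}_{n})}{\mathcal{Z}_{n}} \;\geq\; \frac{f(\mathcal{M}_{n})}{\mathcal{Z}_{n}} \;\stackrel{\eqref{eq:det-parti-bounds}}{\geq}\; \frac{\mathcal{M}_{n} + 1}{2 n \kappa},
\]
where the last step is exactly the deterministic partition‑function bound~\eqref{eq:det-parti-bounds}.

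Given these observations, I would compute
\[
\E{\frac{1}{\mathcal{M}_{n+1}} \;\Big|\; \mathcal{F}_{n}} = \frac{1 - p_{n}}{\mathcal{M}_{n}} + \frac{p_{n}}{\mathcal{M}_{n} + 1} = \frac{1}{\mathcal{M}_{n}} - \frac{p_{n}}{\mathcal{M}_{n}(\mathcal{M}_{n} + 1)} \leq \frac{1}{\mathcal{M}_{n}}\left(1 - \frac{1}{2 n \kappa}\right),
\]
the final inequality using the lower bound $p_{n} \geq (\mathcal{M}_{n}+1)/(2n\kappa)$ displayed above. Multiplying through by the deterministic (hence $\mathcal{F}_{n}$‑measurable) quantity $\prod_{i=1}^{n}\left(1 - \frac{1}{2i\kappa}\right)^{-1}$ and using the telescoping identity $\left(1 - \frac{1}{2n\kappa}\right)\prod_{i=1}^{n}\left(1-\frac{1}{2i\kappa}\right)^{-1} = \prod_{i=1}^{n-1}\left(1-\frac{1}{2i\kappa}\right)^{-1}$ yields $\E{\mathcal{S}_{n+1} \mid \mathcal{F}_{n}} \leq \mathcal{S}_{n}$, which is the claim.

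There is no serious obstacle: the argument is a short conditional‑expectation calculation, and the only genuine input is the chain of inequalities for $p_{n}$, in which the trivial bound $N_{\mathcal{M}_{n}}(n) \geq 1$ is combined with the handshaking‑lemma estimate~\eqref{eq:det-parti-bounds}. The remaining points are pure bookkeeping: checking $\kappa \geq 1$ so the normalising product is positive and finite, checking $\mathcal{M}_{n} \geq 1$ so no division by zero occurs, and noting integrability is automatic from $0 < 1/\mathcal{M}_{n+1} \leq 1$. This mirrors the martingale argument of~\cite[Proposition~12]{galashin}, adapted to the present normalisation, and is exactly what will be needed to run the maximal‑inequality argument for Lemma~\ref{lem:max-growth}.
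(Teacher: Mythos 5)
Your proposal is correct and follows essentially the same route as the paper: an exact one-step computation of $\E{1/\mathcal{M}_{n+1} \mid \mathcal{F}_{n}}$, the lower bound on the probability of the maximal degree increasing via $f(\mathcal{M}_{n})/\mathcal{Z}_{n}$, and the deterministic partition-function bound~\eqref{eq:det-parti-bounds} to obtain the factor $1 - \tfrac{1}{2n\kappa}$. The extra bookkeeping you include (checking $\kappa \geq 1$, $\mathcal{M}_{n} \geq 1$, and integrability) is accurate but not needed beyond what the paper already records.
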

Lemma~\ref{lem:max-growth} follows as a result of the following facts. We first note that, 
\begin{linenomath*}
    \begin{align*}
        \prod_{i=1}^{n-1} \left(1-\frac{1}{2i\kappa}\right)^{-1} = \prod_{i=1}^{n-1} \frac{2i \kappa}{2i\kappa - 1} = \prod_{i=1}^{n-1} \frac{i}{i - 1/(2\kappa)} = \frac{\Gamma(n) \Gamma(1- 1/(2\kappa))} {\Gamma(n - 1/(2\kappa))}. 
    \end{align*}
\end{linenomath*}
By using the fact that, for each $n \in \mathbb{N}$, $\alpha > 0$
\[n^{-\alpha} \Gamma(n)/\Gamma(n- \alpha) > 0 \quad \text{and} \quad \lim_{n \to \infty} n^{-\alpha} \Gamma(n)/\Gamma(n- \alpha) = 1,\] 
we may define \[c_{\min} := \inf_{n \in \mathbb{N}} \frac{n^{-1/(2 \kappa)} \Gamma(n) \Gamma(1- 1/(2\kappa)}{\Gamma(n- 1/(2\kappa))} > 0.\] But then, by Doob's martingale inequality, 
\begin{linenomath*}
    \begin{align*}
        \Prob{ \exists j \colon j^{-1/2\kappa} \mathcal{M}_{j} < r} = \Prob{\sup_{j \in \mathbb{N} } j^{1/2\kappa} \mathcal{M}^{-1}_{j} > \frac{1}{r}} \leq \Prob{\sup_{j \in \mathbb{N}} \mathcal{S}_{j} > \frac{c_{\min}}{r} } \leq \E{\mathcal{S}_{1}} r/c_{\min} = r/c_{\min}.
    \end{align*}
\end{linenomath*}
By setting $r = \eps c_{\min}$ we deduce Equation~\eqref{eq:max-growth}. 
\end{proof}

\begin{proof}[Proof of Claim~\ref{clm:super-martingale}]
Note that, since there may be multiple nodes in $\mathcal{T}_{n}$ with out-degree $\mathcal{M}_{n}$, we have
\[
\E{1/\mathcal{M}_{n+1} \,| \, \mathcal{T}_{n}} = \begin{cases}
    \frac{1}{\mathcal{M}_{n} + 1} & \text{with probability at least } f(\mathcal{M}_{n})/\mathcal{Z}_{n},
    \\
    \frac{1}{\mathcal{M}_{n}} & \text{with probability at most } 1 - f(\mathcal{M}_{n})/\mathcal{Z}_{n}. 
\end{cases}
\]
Therefore, for $n \in \mathbb{N}$
\begin{linenomath*}
\begin{align*}
\E{1/\mathcal{M}_{n+1} \,| \, \mathcal{T}_{n}} & \leq \frac{1}{\mathcal{M}_{n}}\left(1 - \frac{f(\mathcal{M}_{n})}{\mathcal{Z}_{n}}\right) + \frac{1}{\mathcal{M}_{n} +1} \frac{f(\mathcal{M}_{n})}{\mathcal{Z}_{n}} = \frac{1}{\mathcal{M}_{n}} - \frac{1}{\mathcal{M}_{n}(\mathcal{M}_{n} + 1)}\frac{f(\mathcal{M}_{n})}{\mathcal{Z}_{n}}
\\ & = \frac{1}{\mathcal{M}_{n}}\left(1 - \frac{f(\mathcal{M}_{n})}{\mathcal{Z}_{n} (\mathcal{M}_{n} + 1)}\right) \stackrel{\eqref{eq:det-parti-bounds}}{\leq} \frac{1}{\mathcal{M}_{n}}\left(1 - \frac{1}{2n\kappa}\right). 
\end{align*}
\end{linenomath*}
Multiplying both sides by $\prod_{i=1}^{n} \left(1-\frac{1}{2i\kappa}\right)^{-1}$, we conclude the proof.
\end{proof}

The proof of Theorem~\ref{thm:superlinear} works by using a Borel-Cantelli argument to show that the event
    \[
    \left\{\text{the $n$th node catches up to a node of maximal out-degree in $\mathcal{T}_{n}$}\right\}, 
    \]
    occurs only finitely often, which shows that Equation~\eqref{eq:catch-up-set} from Lemma~\ref{lem:finite-catch-up} is satisfied. We wish to define this event more formally, with respect to the underlying process $(\mathscr{T}_{t})_{t \geq 0}$. In this regard, for $n \in \mathbb{N}$, 
  \begin{equation} \label{eq:max-n}
    m_{n} := \min\left\{u \in \mathscr{T}_{\tau_{n}}: \outdeg{u, \mathscr{T}_{\tau_{n}}} = M_{n}\right\} \quad \text{and} \quad o_{n} := \min\left\{u \in \mathscr{T}_{\tau_{n}} \setminus \mathscr{T}_{\tau_{n-1}}\right\},  
    \end{equation}
where, in both cases, the minimum refers to the lexicographical ordering on $\mathcal{U}$. Therefore, $m_{n}$ denotes the Ulam-Harris label of a node that attains the maximal out-degree in $\mathscr{T}_{\tau_{n}}$, and $o_{n}$ denotes the Ulam-Harris label of the $n$th node in the process $(\mathcal{T}_{n})_{n \in \mathbb{N}}$.
\begin{proof}[Proof of Theorem~\ref{thm:superlinear}]
Note that, as defined in~\eqref{eq:max-n}, the values $o_{n}$ and $m_{n}$ are measurable with respect to $\mathcal{F}_{\tau_{n}}$. Additionally, by definition, $\outdeg{m_{n}, \mathscr{T}_{\tau_{n}}} = \mathcal{M}_{n}$. Thus, by the memoryless property of the exponential distribution, conditional on $\mathcal{F}_{\tau_{n}}$, the waiting time 
\[
X'(m_{n}(\mathcal{M}_{n}+1)) := X(m_{n}(\mathcal{M}_{n}+1)) - \tau_{n} \sim \Exp{f(\mathcal{M}_{n})}. 
\]
Note also that the random variables $X'(m_{n}(\mathcal{M}_{n}+1)), (X(m_{n}(\mathcal{M}_{n}+j)))_{j \geq 2}$ and $(X(o_{n}j))_{j \in \mathbb{N}}$ are independent of $\mathcal{F}_{\tau_{n}}$. 
Consequentially, conditional on $\mathcal{F}_{\tau_{n}}$, we have
\begin{linenomath}
\begin{align*}
&\mathcal{A}_{n} := \left\{\exists t > 0\colon \outdeg{o_{n}, \mathscr{T}_{t}} \geq \outdeg{m_{n}, \mathscr{T}_{t}} \right\} \\ & \hspace{3cm} = \left\{\exists j \colon \sum_{i=1}^{\mathcal{M}_{n} + j} X(o_{n}i) \leq X'(m_{n}(\mathcal{M}_{n}+1)) + \sum_{i = \mathcal{M}_{n} + 2}^{\mathcal{M}_{n} + j} X(m_{n}i)\right\}.
\end{align*}
\end{linenomath}
The event on the right hand side says that, for some $j$, $o_{n}$ reaches out-degree $\mathcal{M}_{n} + j$ before $m_{n}$ does. Now, by Lemma~\ref{lem:overtake-bound}, with $(X_{i})_{i \in \mathbb{N}}$, $(X_{i})_{i \in \mathbb{N}}$ independent sequences with $X_{i} \sim \Exp{f(i-1)}$, for $\lambda > 0$ we have 
\begin{equation} \label{eq:moment-bound-new-catch-up}
\Prob{\mathcal{A}_{n} \, | \, \mathcal{F}_{\tau_{n}}} \leq \left(\prod_{i=\mathcal{M}_{n} +1}^{\infty}\E{e^{\lambda \left(X_{i}' - X_{i}\right)} \, \bigg | \, \mathcal{M}_{n}}\right) \E{e^{-\lambda \sum_{k=1}^{\mathcal{M}_{n}} X_{k}}\, \bigg | \, \mathcal{M}_{n}}.   
\end{equation}
Now, for each $r > 0$ we also define 
\begin{equation} \label{eq:good-concentration}
   b^{r}_{n} := \left\lceil n^{1/(2\kappa)}/r\right\rceil \quad \text{and} \quad  \mathcal{B}^{r} := \left\{\forall n \in \mathbb{N}\colon \mathcal{M}_{n} \geq b^{r}_{n}\right\}. 
\end{equation}
Noting that the right-side of~\eqref{eq:moment-bound-new-catch-up} is decreasing in the value of $\mathcal{M}_{n}$, by combining~\eqref{eq:moment-bound-new-catch-up} and~\eqref{eq:good-concentration}, for any $\lambda < f(b^{r}_{n})$ 
\begin{linenomath}
\begin{align} \label{eq:event-bound}
\nonumber & \Prob{\mathcal{A}_{n} \cap \mathcal{B}^{r}} \leq \E{e^{\lambda\sum_{i = b^{r}_{n} + 1}^{\infty} \left(X_{i}' - X_{i}\right)}} \E{e^{-\lambda \sum_{k=1}^{b^{r}_{n}} X_{k}}}
= \prod_{i=b^{r}_{n}}^{\infty} \left(1 + \frac{\lambda}{f(i)^2 - \lambda}\right) \prod_{k=0}^{b^{r}_{n} -1} \left(1+ \frac{\lambda}{f(k) + \lambda}\right)
\\ & \hspace{1cm} \leq \exp\left(\sum_{i=b^{r}_{n}}^{\infty}\frac{\lambda}{f(i)^{2} - \lambda}\right) \exp\left(-\sum_{k=0}^{b^{r}_{n} - 1} \frac{\lambda}{f(k) + \lambda}\right) \leq \exp\left(\sum_{i=b^{r}_{n}}^{\infty}\frac{\lambda}{f(i)^{2} - \lambda}\right) \exp\left(-\frac{\lambda}{f(0)}\right). 
\end{align}
\end{linenomath}
Note that~\eqref{eq:controlled-superlinear} implies that for each $\ell \in \mathbb{N}$ 
\begin{equation}
f(\ell) \geq (f(0)/\kappa) (\ell+1), 
\end{equation}
 and thus, by a (crude) integral test bound, that for $k \in \mathbb{N}$, 
 \begin{equation} \label{eq:crude-bound}
 \sum_{\ell=j}^{\infty} 1/f(\ell)^2 \leq (\kappa/f(0))^2 \sum_{\ell = j}^{\infty} \frac{1}{(\ell + 1)^2} < \frac{2(\kappa/f(0))^2}{j}. 
 \end{equation}
  In a similar manner to the passage following~\eqref{eq:overtake-exp}, choose 
\begin{equation} \label{eq:lambda-bound}
\lambda = \lambda_{n} := \sqrt{\left(2\sum_{i=b^{r}_{n}}^{\infty} \frac{1}{f(i)^2} \right)^{-1}} \stackrel{\eqref{eq:crude-bound}}{>}  \frac{\sqrt{b^{r}_{n}}f(0)}{2\kappa} \stackrel{\eqref{eq:good-concentration}}{\geq} \frac{f(0) n^{1/(4\kappa)}}{2\kappa \sqrt{r}}. 
\end{equation}
Then, since $\lambda_{n} \leq f(j)/2$ for each $j\geq b^{r}_{n}$, we can bound the 
right-side of~\eqref{eq:event-bound} so that  
\begin{equation} \label{eq:upper-bound-borel-cantelli}
\Prob{\mathcal{A}_{n} \cap \mathcal{B}^{r}} \stackrel{\eqref{eq:lambda-bound}}{<} e^{2} \exp\left(-\frac{n^{1/(4\kappa)}}{2\kappa \sqrt{r}}\right).  
\end{equation}
The right-side of~\eqref{eq:upper-bound-borel-cantelli} is summable, hence, by the Borel-Cantelli lemma, we have 
\begin{equation}
\Prob{\mathcal{A}_{n} \cap \mathcal{B}^{r} \text{ occurs for infinitely many $n$ }} = 0. 
\end{equation}
By Lemma~\ref{lem:max-growth}, for any $\eps > 0$, there exists $r > 0$ such that $\Prob{\mathcal{B}^{r}} \geq 1 - \eps$, hence, for any $\eps > 0$, we have 
\begin{equation}
\Prob{\mathcal{A}_{n} \text{ occurs for infinitely many $n$ }} < \eps. 
\end{equation}
Hence, with probability one, $\mathcal{A}_{n}$ only occurs finitely often. This implies that the set 
\[
\left\{o_{n}: o_{n} \text{ catches up to a node of maximal out-degree}\right\}
\]
is finite, hence there is a finite set $\mathcal{P}$ which contains nodes of maximal out-degree, for all but finitely many $n$. A similar argument to the proof of Item~\ref{item:persistent-hub} of Theorem~\ref{thm:persistence}, using~\eqref{eq:finite-leadership}, now implies the existence of a persistent hub. Uniqueness follows from a similar argument to the proof of Theorem~\ref{thm:main}, using the deterministic values $f(j)$ instead of $x_{j}$ in~\eqref{eq:borel-2}.
\end{proof}

\section*{Acknowledgements}
The author is funded by Deutsche Forschungsgemeinschaft (DFG) through DFG Project no. $443759178$. 

\bibliographystyle{abbrv}
\bibliography{refs}

\end{document}